\newcommand{\numpagesize}{\fontsize{9}{0.1pt}\normalfont}
\titleformat{\section}[hang]
 {\fontsize{12}{11.7} \bfseries}
 {\thesection}
 {3mm}
 {}
\titlespacing{\section}{0pt}{23.4pt}{11.7pt}
\titleformat{\subsection}[hang]
 {\fontsize{12}{11.7} \itshape}
 {\thesubsection}
 {3mm}
 {}
\titlespacing{\subsection}{0pt}{23.4pt}{11.7pt}
\newcommand{\A}{\mathcal{A}}
\newcommand{\K}{\mathbb{K}}
\newcommand{\id}{\mbox{id}}
\newcommand{\sll}{\mathfrak{sl}_2(\mathbb{K})}
\newtheorem{theorem}{Theorem}[section]
\newtheorem{corollary}[theorem]{Corollary}
\newtheorem{lemma}[theorem]{Lemma}
\newtheorem{proposition}[theorem]{Proposition}
\newtheorem{definition}[theorem]{Definition}
\newtheorem{example}[theorem]{Example}
\newtheorem{remark}[theorem]{Remark}
\def\id{{\rm id}}
\begin{document}

\thispagestyle{plain}

\title {Paradigm of Nonassociative \\ Hom-algebras and
Hom-superalgebras}

\author{ Abdenacer Makhlouf}

\address{  Laboratoire de Math\'ematiques, Informatique et
Applications\\ Universit\'e de Haute-Alsace, France}
\email{Abdenacer.Makhlouf@uha.fr}

\vspace*{9.6mm}

\maketitle


\section*{Abstract}
The aim of this paper is to give a survey of nonassociative
Hom-algebra and Hom-superalgebra structures. The main feature of
these algebras is that the identities defining the structures are
twisted by  homomorphisms. We
 discuss  Hom-associative algebras, Hom-Flexible algebras, Hom-Lie algebras,
 $G$-hom-associative algebras, Hom-Poisson algebras,
 Hom-alternative algebras and Hom-Jordan algebras and  $\mathbb{Z}_2$-graded versions.
  We give an overview of the development of Hom-algebras structures
which has been intensively investigated recently.

\medskip

{\bfseries Keywords: }{Hom-associative algebra, Hom-Lie algebra,
Hom-Lie superalgebra, Hom-Poisson algebra, Hom-Lie admissible
algebra, Hom-Lie admissible superalgebra, Hom-alternative algebra,
Hom-Jordan algebra.}

\medskip

\medskip

\section{Introduction}

We mean by Hom-algebra or Hom-superalgebra a triple consisting of a
vector space, respectively $\mathbb{Z}_2$-graded vector space, a
multiplication and a homomorphism. The main feature of these
algebras is that the identities defining the structures are twisted
by homomorphisms.  Such algebras appeared in the ninetieth in
examples of $q$-deformations of the Witt and the Virasoro algebras.

The study of nonassociative algebras was originally motivated by
certain problems in physics and other branches of mathematics. The
first motivation to study nonassociative Hom-algebras comes from
quasi-deformations of Lie algebras of vector fields, in particular
$q$-deformations of Witt and Virasoro algebras
\cite{AizawaSaito,ChaiElinPop,ChaiKuLukPopPresn,ChaiIsKuLuk,ChaiPopPres,
CurtrZachos1,DaskaloyannisGendefVir, Kassel1,LiuKeQin,Hu}. The
deformed algebras arising in connection with $\sigma$-derivation are
no longer Lie algebras. It was observed in the pioneering works
 that in these
examples a twisted Jacobi identity holds. Motivated by these
examples and their generalization,   Hartwig, Larsson and Silvestrov
introduced and studied  in \cite{HLS,LS1,LS2,LS3} the classes of
quasi-Lie, quasi-Hom-Lie and Hom-Lie algebras. In the class of
Hom-Lie algebras skew-symmetry is untwisted, whereas the Jacobi
identity is twisted by a homomorphism and contains three terms as in
Lie algebras, reducing to ordinary Lie algebras when the twisting
linear map is the identity map. They showed that Hom-Lie algebras are
closely related to discrete and deformed vector fields and
differential calculus and that some $q$-deformations of the Witt and
the Virasoro algebras have the structure of Hom-Lie algebra.

The  Hom-associative algebras plays the role of associative algebras
in the Hom-Lie setting. They were introduced by Makhlouf and
Silvestrov  in \cite{MS}, where it is shown that
 the commutator bracket of a Hom-associative algebra
gives rise to a Hom-Lie algebra and where a classification of Hom-Lie admissible
algebras is established.
 Given a Hom-Lie algebra,  a universal enveloping Hom-associative
 algebra was constructed by Yau in \cite{Yau:EnvLieAlg}.
 See also \cite{Canepl2009,
FregierGohr1,FregierGohr2,Gohr,HomDeform,Yau:homology} for other
works on Hom-associative algebras. In a similar way Yau proposed in
\cite{Yau:EnvLieAlg} a notion of Hom-dialgebra which gives rise to
Hom-Leibniz algebra. While Hom-associative superalgebras and Hom-Lie
superalgebras were studied in \cite{AmmarMakhlouf2009}.

The  Hom-alternative algebras and Hom-Jordan algebras which are
twisted version of the ordinary alternative algebras and Jordan
algebras were introduced by the author in
\cite{Mak:HomAlternativeHomJord}. Their properties are discussed and
 construction procedures using ordinary alternative algebras
or Jordan algebras are provided. Also, it is  shown that a plus
Hom-algebra  of a Hom-associative algebra leads to Hom-Jordan
algebra.

Beyond the binary algebras, a generalization of $n$-ary algebras to
Hom situation was studied in \cite{AMS2009}, $n$-ary Hom-algebras of
Lie type and associative type are discussed. Dualization of Hom-associative algebras leads to Hom-coassociative coalgebras. The Hom-coassociative coalgebras, Hom-bialgebras and Hom-Hopf algebras were introduced in \cite{HomHopf,HomAlgHomCoalg}. A twisted version of Yang-Baxter equation, quasi-triangular bialgebras and quantum groups were discussed in \cite{Yau:YangBaxter,Yau:YangBaxter2,Yau:ClassicYangBaxter,Yau:HomQuantumGrp1,Yau:HomQuantumGrp2}.
 A study from the monoidal category point of view is given in \cite{Canepl2009}.

The main purpose of this paper is to summarize  nonassociative
Hom-algebra structures extending the ordinary nonassociative
algebras to Hom-algebra setting. We set the definitions and
properties, and provide some examples. In  Section 2, we fix the
notations and some general setting of Hom-algebras. The third
Section concerns Hom-associative algebras, Hom-associative
superalgebras, Hom-flexible algebras and Hom-dialgebras. Section 4
deals with Hom-Lie algebras, Hom-Leibniz, Hom-Poisson algebras and
Hom-Lie superalgebras. It is shown in particular that there is  an
adjoint pair of functors between the category of Hom-associative
algebras (resp. Hom-dialgebras) and the category of Hom-Lie algebras
(resp. Hom-Leibniz algebras). On the other hand the supercommutator
of a Hom-associative superalgebra determines a Hom-Lie superalgebra.
The Hom-Poisson algebra structure which emerges naturally from
deformation theory of Hom-associative algebras (see
\cite{HomDeform}) is also given. We also present the definition of
quasi-Lie algebras which offer to treat within the same framework
the well known  generalizations of Lie algebras, that is Lie
superalgebras and  color Lie algebras as well as their corresponding
versions in Hom-superalgebra setting. Section 5 is dedicated to
Hom-Lie admissible algebras and Hom-Lie admissible superalgebras.
Their classifications lead to $G$-Hom-associative algebras and
$G$-Hom-associative superalgebras, which include in particular Hom-algebra versions of left-symmetric algebras (Vinberg algebras) or their opposite algebras, that is right symmetric algebras (pre-Lie algebras). In Section 6, we discuss
Hom-alternative algebras and study their properties. In particular,
we show that the twisted version of the associator  is an
alternating function of its arguments.  We show that an ordinary
alternative algebra and one of its algebra endomorphisms lead to a
Hom-alternative algebra where the twisting map is actually the
algebra endomorphism. This process was
  introduced in \cite{Yau:homology}
 for Lie and associative algebras and more generally  $G$-associative algebras
 (see \cite{MS} for this class of algebras) and generalized to coalgebras in \cite{HomAlgHomCoalg},
 \cite{HomHopf} and to $n$-ary algebras of Lie and associative types in
 \cite{AMS2009}. We derive examples of Hom-alternative algebras from
  4-dimensional alternative algebras which are
 not associative and from algebra of octonions. In the last Section
 we introduce a notion of Hom-Jordan algebras and show that
 it fits with the Hom-associative structure, that is the plus  Hom-associative algebra
 leads to Hom-Jordan algebra. {\ding{167}}

\section{Preliminaries}
Throughout this paper $\mathbb{K}$ is a field of characteristic 0,
$V$ is a $\K$-linear space or, when talking about superalgebras, $V$
is  a superspace. Let $V$ be a superspace over $\K$ that is a
$\mathbb{Z}_2$-graded $\K$-linear  space with a direct sum $V=V_0
\oplus V_1$. The element of $V_j$, $j=\{0,1\}$, are said to be
homogenous and of parity $j$. The parity of a homogeneous element
$x$ is denoted by $| x |$. One may consider that $\K$ is any
commutative ring and $V$ a $\K$-module.

We mean by a Hom-algebra, respectively Hom-superalgebra, a triple $(
V, \mu, \alpha) $ where  $\mu : V\times V \rightarrow V$ is a
$\K$-bilinear map and $\alpha:V \rightarrow V$ is a $\K$-linear map,
respectively an even $\K$-linear map. The type of the Hom-algebra or
the Hom-superalgebra is fixed by the identity satisfied by the
elements. Let $( V, \mu, \alpha) $ and $( V', \mu', \alpha') $ be
two Hom-algebras (resp. Hom-superalgebras) of the same type, a
morphism $f:( V, \mu, \alpha) \rightarrow ( V', \mu', \alpha')$ is a
linear map (resp. even linear map) $f: V \rightarrow
 V'$ such that $f\circ \mu=\mu' \circ (f\times f)$ and $f\circ
 \alpha=\alpha'\circ f$. In some statements the multiplicativity of $\alpha$ is required,
 that is $\alpha\circ \mu =\mu\circ (\alpha \times \alpha)$. We call such Hom-algebras,
 multiplicative Hom-algebras.

Let $V$ be a $n$-dimensional $\K$-linear space and
$\{e_1,\cdots,e_n\}$ be a basis of $V$. A  Hom-algebra structure on
$V$ with product $\mu$ is determined by $n^3$ structure constants
$C_{i j}^k$, where $\mu (e_i,e_j)=\sum_{k=0}^{n}{C_{i j}^k e_k}$ and
homomorphism $\alpha$ which is given by $n^2$ structure constants
$a_{i j}$, where $\alpha (e_i)=\sum_{j=0}^{n}{a_{i j} e_j}$. If we
require that the Hom-algebra is of a given type then this limits the
set of structure constants $(C_{i j}^k,a_{i j})$ to an algebraic
variety imbedded in $\K^{n^3+n^2}$. The polynomials defining the
algebraic variety being derived from the identities. A point in such
an algebraic variety  represents an $n$-dimensional Hom-algebra,
along with a particular choice of basis. A change of basis may give
rise to a  different point of the algebraic variety. The
group $GL(n,\K)$ acts on the algebraic varieties of Hom-structures
by the so-called "transport of structure" action defined as follows:

Let $\A=( V, \mu, \alpha)$ be a Hom-algebra. Given  $f\in GL(n,\K)$,
the action $f\cdot \A$ transports the structures for $x,y\in V$ by
\begin{eqnarray}
f\cdot \mu (x,y)=f^{-1}\mu (f(x),f(y))\\
f\cdot \alpha (x)=f^{-1}\alpha (f(x))
\end{eqnarray}
The \textit{orbit} of the Hom-algebra $\A$ is given by $\vartheta
\left( \A\right) =\left\{ \A^{\prime }=f\cdot \A,\quad f\in
GL_n\left( \K\right) \right\}.$ The orbits are in 1-1-correspondence
with the isomorphism classes of $n$-dimensional algebras. \newline
The stabilizer subgroup of $\A$ $\left( stab\left( \A\right)
=\left\{ f\in GL_n\left( \K\right) :\A=f\cdot \A\right\} \right) $
is exactly $Aut\left( \A\right) ,$ the automorphism group of $\A$.

A subalgebra of a Hom-algebra $( V, \mu, \alpha) $ is a triple $( W,
\mu, \alpha) $ where $W$ is a subspace of $V$ closed under $\mu$ and
$\alpha$. A subspace $I$ of $V$ is a two-sided ideal if $\mu (I,V)\subset I$ and
$\mu (V,I)\subset I$.

To any  Hom-algebra $\A=( V, \mu, \alpha) $, we associate  a minus
Hom-algebra $\A^-=( V, \mu^-, \alpha)$ where
$\mu^-(x,y)=\frac{1}{2}(\mu(x,y)-\mu(y,x))$ and a plus Hom-algebra
$\A^+=( V, \mu^+, \alpha)$ where
$\mu^+(x,y)=\frac{1}{2}(\mu(x,y)+\mu(y,x))$.

 We call \emph{Hom-associator} associated
to a Hom-algebra $(V,\mu,\alpha)$  the trilinear map
$\mathfrak{as_\alpha}$ defined for any $x,y,z\in V$ by
\begin{equation}\label{HomAssociator}
\mathfrak{as}_\alpha(x,y,z)=\mu (\alpha(x),\mu (y, z))-\mu(\mu(x,
y), \alpha(z)).{\ding{167}}
\end{equation}

\section{Hom-associative algebras, Hom-Flexible algebras,
 Hom-dialgebras}
\subsection{Hom-associative algebras and superalgebras}
The notion of Hom-associative algebra was introduced in \cite{MS}, while the Hom-associative algebras were discussed slightly in \cite{AmmarMakhlouf2009}.
\begin{definition}[\cite{MS}]
A Hom-associative algebra  is a triple $( V, \mu,
\alpha) $ consisting of a vector space $V$, a bilinear map $\mu : V\times V \rightarrow V$
and a linear map $\alpha:V \rightarrow V$  satisfying for all $x,y,z\in V$
\begin{equation}\label{Hom-ass}
\mu(\alpha(x), \mu (y, z))= \mu (\mu (x, y), \alpha (z)).
\end{equation}
\end{definition}
In terms of Hom-associator \eqref{HomAssociator},  the identity
\eqref{Hom-ass} writes $\mathfrak{as}_\alpha(x,y,z)=0$.
\begin{example}\label{example1ass}
Let $\{e_1,e_2,e_3\}$  be a basis of a $3$-dimensional linear space
$V$ over $\K$. The following multiplication $\mu$ and linear map
$\alpha$ on $V$ define Hom-associative algebras over $\K^3${\rm :}
$$
\begin{array}{ll}
\begin{array}{lll}
 \mu ( e_1,e_1)&=& a\ e_1, \ \\
\mu ( e_1,e_2)&=&\mu ( e_2,e_1)=a\ e_2,\\
\mu ( e_1,e_3)&=&\mu ( e_3,x_1)=b\ e_3,\\
 \end{array}
 & \quad
 \begin{array}{lll}
\mu ( e_2,e_2)&=& a\ e_2, \ \\
\mu ( e_2, e_3)&=& b\ e_3, \ \\
\mu ( e_3,e_2)&=& \mu ( e_3,e_3)=0,
  \end{array}
\end{array}
$$

$$  \alpha (e_1)= a\ e_1, \quad
 \alpha (e_2) =a\ e_2 , \quad
   \alpha (e_3)=b\ e_3,
$$
where $a,b$ are parameters in $\K$. The algebras are not associative
when $a\neq b$ and $b\neq 0$, since
$$\mu (\mu (e_1,e_1),e_3))- \mu ( e_1,\mu
(e_1,e_3))=(a-b)b e_3.$$

\end{example}

\begin{example}[Polynomial Hom-associative algebra \cite{Yau:homology}]
Consider the polynomial algebra $\A=\K [x_1,\cdots x_n]$ in $n$
variables. Let $\alpha$ be an algebra endomorphism of $\A$ which is
uniquely determined by the $n$ polynomials $\alpha (x_i)=\sum
{\lambda_{i;r_1,\cdots,r_n}x^{r_1}_1,\cdots x^{r_n}_n}$ for $1\leq i
\leq n$. Define $\mu$ by
\begin{equation}
\mu (f,g)=f(\alpha (x_1),\cdots \alpha (x_n))g(\alpha (x_1),\cdots
\alpha (x_n))
\end{equation}
for $f,g$ in $\A$. Then,  $(\A ,\mu,\alpha)$ is a Hom-associative algebra.
\end{example}
\begin{example}[\cite{Yau:comodule}]
Let $\A=(V,\mu,\alpha )$ be a Hom-associative algebra. Then
$(\mathcal{M}_n(\A),\mu',\alpha' )$, where $\mathcal{M}_n(\A)$ is
the vector space of $n\times n$ matrix with entries in $V$, is also
a Hom-associative algebra in which the multiplication $\mu'$ is
given by matrix multiplication and $\mu$ and $\alpha'$ is given by
$\alpha$ in each entry.

\end{example}

We define now  Hom-associative superalgebras.
\begin{definition}[\cite{AmmarMakhlouf2009}]
A Hom-associative superalgebra is a triple $(V, \mu, \alpha)$
consisting of a superspace $V$, an even bilinear map $\mu: V\times V
\rightarrow V$ and an even  homomorphism $\alpha: V \rightarrow V$
satisfying for all $x,y,z\in V$
\begin{equation}
\mu(\alpha(x),\mu (y,z))=\mu (\mu (x,y),\alpha (z))
\end{equation}
\end{definition}

\begin{remark}
Some properties of  Hom-associative algebras and unital
Hom-asso-ciative algebras were discussed in
\cite{AmmarMakhlouf2009,AMS2009,
FregierGohr1,FregierGohr2,Gohr,HomDeform,Yau:homology}. A study from
the point of view of monoidal category was provided in
\cite{Canepl2009} and where a new kind of unital Hom-associative
algebras is introduced.
\end{remark}

\subsection{Hom-Flexible algebras}
  The flexible algebras was initiated by Albert
  \cite{Albert48} and investigated by number of authors Myung,
  Okubo, Laufer, Tomber and Santilli,  see for example \cite{MyungMonograph}.
  The notion of Hom-flexible algebra was introduced in \cite{MS}. We
  summarize in the following the definition and some
  characterizations.
 \begin{definition}[\cite{MS}]
 A Hom-algebra $\A =(V, \mu, \alpha)$ is called
 flexible if and only if   for any $x,y$ in $V$
\begin{equation}\label{flexible}
      \mu (\alpha (x),\mu (y,x)))=\mu (\mu (x,y), \alpha (x))
\end{equation}

 \end{definition}
 The condition (\ref{flexible}) may be written using the Hom-associator by
$
      \mathfrak{as_\alpha}(x,y,x)=0.
$

We recover the classical dialgebra when $\alpha$ is the identity
map.
\begin{lemma}\label{flexi23}
Let $\A =(V, \mu, \alpha)$ be a Hom-algebra. The
following assertions are equivalent
\begin{enumerate}
\item $\A$ is flexible.
\item For any $x,y$ in $V$, $\mathfrak{as_\alpha}(x,y,x)=0$.
\item For any $x,y,z$ in $V$,
      $\mathfrak{as_\alpha}(x,y,z)=-\mathfrak{as_\alpha}(z,y,x)$
\end{enumerate}
\end{lemma}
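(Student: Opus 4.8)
The plan is to prove Lemma~\ref{flexi23} by establishing the cycle of implications $(1)\Leftrightarrow(2)$, then $(3)\Rightarrow(2)$ (trivially, by setting $z=x$), and finally $(2)\Rightarrow(3)$, which is the only implication requiring real work. The equivalence $(1)\Leftrightarrow(2)$ is immediate: it is simply the observation already recorded just after the definition of flexibility, namely that the defining identity \eqref{flexible}, $\mu(\alpha(x),\mu(y,x))=\mu(\mu(x,y),\alpha(x))$, is by the very definition \eqref{HomAssociator} of the Hom-associator the assertion $\mathfrak{as}_\alpha(x,y,x)=0$ for all $x,y\in V$. And $(3)\Rightarrow(2)$ follows by specializing $z:=x$ in $\mathfrak{as}_\alpha(x,y,z)=-\mathfrak{as}_\alpha(z,y,x)$, which gives $\mathfrak{as}_\alpha(x,y,x)=-\mathfrak{as}_\alpha(x,y,x)$, hence $2\,\mathfrak{as}_\alpha(x,y,x)=0$, and since $\mathrm{char}\,\K=0$ we conclude $\mathfrak{as}_\alpha(x,y,x)=0$.

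The substantive step is $(2)\Rightarrow(3)$, and the standard device is polarization (linearization) of the quadratic-in-$x$ identity $\mathfrak{as}_\alpha(x,y,x)=0$. First I would note that $\mathfrak{as}_\alpha$ is trilinear (being a combination of compositions of the bilinear $\mu$ and the linear $\alpha$), so it is legitimate to substitute $x\mapsto x+z$ in the identity $\mathfrak{as}_\alpha(x,y,x)=0$ and expand. Doing so yields
\begin{equation}
\mathfrak{as}_\alpha(x+z,y,x+z)=\mathfrak{as}_\alpha(x,y,x)+\mathfrak{as}_\alpha(x,y,z)+\mathfrak{as}_\alpha(z,y,x)+\mathfrak{as}_\alpha(z,y,z)=0.
\end{equation}
By hypothesis $(2)$ the first and last terms vanish (the first for the pair $(x,y)$, the last for the pair $(z,y)$), so we are left with $\mathfrak{as}_\alpha(x,y,z)+\mathfrak{as}_\alpha(z,y,x)=0$, which is exactly assertion $(3)$. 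Conversely, re-deriving $(2)$ from $(3)$ as above closes the loop if one prefers to present it that way; but since $(2)\Leftrightarrow(1)$ is already in hand, the chain $(1)\Leftrightarrow(2)\Rightarrow(3)\Rightarrow(2)$ suffices.

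I do not anticipate any genuine obstacle here: the only point requiring care is the appeal to $\mathrm{char}\,\K=0$ (or merely $\mathrm{char}\,\K\neq 2$) when dividing by $2$ in the step $(3)\Rightarrow(2)$, and this is guaranteed by the standing hypothesis on $\K$ fixed in the Preliminaries. The argument uses nothing beyond trilinearity of the Hom-associator and the field hypothesis, so it is essentially a formal computation; I would present it compactly, spelling out the expansion above and noting the cancellation of the diagonal terms by hypothesis $(2)$.
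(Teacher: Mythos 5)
Your argument is correct and is essentially the paper's own: the equivalence $(1)\Leftrightarrow(2)$ is read off from the definitions, and $(2)\Leftrightarrow(3)$ is obtained by polarizing $\mathfrak{as}_\alpha(\cdot,y,\cdot)$ in its outer arguments (the paper expands $\mathfrak{as}_\alpha(x-z,y,x-z)=0$ where you expand $\mathfrak{as}_\alpha(x+z,y,x+z)=0$, an immaterial difference). Your version is in fact slightly more careful, since you make explicit the converse step $(3)\Rightarrow(2)$ and the use of $\mathrm{char}\,\K\neq 2$, which the paper leaves implicit.
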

\begin{proof}
The first equivalence follows from the definition. To prove the last
equivalence, one writes
$ \mathfrak{as_\alpha}(x-z,y,x-z)=0
$
which is equivalent, using the linearity, to
$
\mathfrak{as_\alpha}(x,y,z)+\mathfrak{as_\alpha}(z,y,x)=0.
$
\end{proof}
\begin{corollary}
Any Hom-associative algebra is flexible.
\end{corollary}
Let $\A =(V,\mu , \alpha)$ be a Hom-algebra. Let $\A ^-=(V,\mu^- ,
\alpha)$
  (resp. $\A ^+=(V,\mu^+ , \alpha)$) be the
 plus Hom-algebra (resp. minus Hom-algebra) with multiplication defined for
 $x,y\in V$ by $\mu^-(x,y)=\frac{1}{2}(\mu(x,y)-\mu(y,x))$ (resp.
$\mu^+(x,y)=\frac{1}{2}(\mu(x,y)+\mu(y,x))$). We have the following  characterization of
Hom-flexible algebras.

\begin{proposition}[\cite{MS}] A Hom-algebra $\A =(V,\mu , \alpha)$ is Hom-flexible if and
only if \begin{equation} \label{equa2}
  \mu^-(\alpha(x),\mu^+(y, z))=\mu^+(\mu^-(x,y),\alpha(z))+
  \mu^+(\alpha(y),\mu^-(x,z))
\end{equation}

\end{proposition}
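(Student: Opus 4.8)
The plan is to prove the equivalence by a direct expansion of both sides into the original multiplication $\mu$, exploiting the bilinearity of $\mu$ and the polarization identity. First I would write out the Hom-flexibility condition \eqref{flexible} in its polarized form: as established in Lemma \ref{flexi23}, the Hom-algebra $\A$ is flexible if and only if $\mathfrak{as}_\alpha(x,y,z)+\mathfrak{as}_\alpha(z,y,x)=0$ for all $x,y,z\in V$, i.e.
\begin{equation*}
\mu(\alpha(x),\mu(y,z))-\mu(\mu(x,y),\alpha(z))+\mu(\alpha(z),\mu(y,x))-\mu(\mu(z,y),\alpha(x))=0.
\end{equation*}
This symmetric form of the defining identity will be the workhorse of the proof.

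Next I would expand the right-hand side of \eqref{equa2} using the definitions $\mu^-(a,b)=\tfrac12(\mu(a,b)-\mu(b,a))$ and $\mu^+(a,b)=\tfrac12(\mu(a,b)+\mu(b,a))$, treating $\alpha$ as a fixed linear map (no multiplicativity of $\alpha$ is needed here, since $\alpha$ is never applied to a product). The term $\mu^+(\mu^-(x,y),\alpha(z))$ expands into a $\tfrac14$-combination of four terms of the form $\pm\mu(\mu(x,y),\alpha(z))$, $\pm\mu(\alpha(z),\mu(x,y))$ with $x,y$ possibly swapped; likewise $\mu^+(\alpha(y),\mu^-(x,z))$ expands into four analogous terms. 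Similarly the left-hand side $\mu^-(\alpha(x),\mu^+(y,z))$ expands into a $\tfrac14$-combination of four terms. After collecting, the identity \eqref{equa2} becomes a statement that a certain $\tfrac14$-linear combination of eight (a priori) distinct bracketings vanishes. The key step is to recognize that, after cancellation of the terms that pair up automatically, what remains is precisely (a multiple of) the polarized flexibility relation displayed above, possibly together with relations obtained from it by permuting or relabelling the free variables $x,y,z$ — and, crucially, that no term survives which is \emph{not} forced to vanish by flexibility, so the implication runs in both directions.

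Concretely, I would organize the bookkeeping by grouping the expanded terms according to which variable is hit by $\alpha$. Terms with $\alpha(x)$ on the outside left, such as $\mu(\alpha(x),\mu(y,z))$ and $\mu(\alpha(x),\mu(z,y))$, should be matched against terms like $\mu(\mu(y,z),\alpha(x))$ and $\mu(\mu(z,y),\alpha(x))$ coming from the right-hand side, and their difference is governed by instances of $\mathfrak{as}_\alpha$. The upshot I expect: \eqref{equa2} rearranges to $\tfrac12\bigl(\mathfrak{as}_\alpha(x,y,z)+\mathfrak{as}_\alpha(z,y,x)\bigr)=0$ (up to symmetrizing in the appropriate variables), which by Lemma \ref{flexi23}(3) is equivalent to flexibility.

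The main obstacle is purely organizational rather than conceptual: there are on the order of a dozen terms to track, each a bracketing of $\alpha$ applied to one variable with a product of the other two, and it is easy to mismatch a sign or a swap. I would mitigate this by introducing shorthand for the six possible bracketings $\mu(\alpha(u),\mu(v,w))$ (and the six with the factors reversed), writing out each of the three $\mu^{\pm}$-blocks in that notation, and then cancelling mechanically; the reversed-product bracketings should cancel in pairs, leaving exactly the four terms of the polarized flexibility identity. One should also double-check that the characteristic-zero hypothesis is used only to divide by $2$ (and $4$), which is harmless. Once the algebra is done, the equivalence is immediate from Lemma \ref{flexi23}.
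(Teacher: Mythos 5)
Your overall strategy---expand \eqref{equa2} into its twelve $\mu$-terms and reassemble them into Hom-associators---is the right one and is essentially the paper's, but your prediction of where the computation lands is wrong, and that error hides a genuine gap in the converse direction. Carrying out the expansion, nothing cancels: the reversed-product bracketings do \emph{not} drop out in pairs; instead each of the twelve terms is exactly one half of some Hom-associator, and they assemble into the six-associator identity
\begin{equation*}
\mathfrak{as}_\alpha(x,y,z)+\mathfrak{as}_\alpha(z,y,x)+\mathfrak{as}_\alpha(x,z,y)+\mathfrak{as}_\alpha(y,z,x)-\mathfrak{as}_\alpha(y,x,z)-\mathfrak{as}_\alpha(z,x,y)=0,
\end{equation*}
not the four-term polarized flexibility relation $\mathfrak{as}_\alpha(x,y,z)+\mathfrak{as}_\alpha(z,y,x)=0$. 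This six-term identity is the sum of three instances of the polarized relation of Lemma \ref{flexi23}(3) (with middle variable $y$, then $z$, then $x$, the last with a minus sign), so flexibility does imply \eqref{equa2}; that direction of your argument survives.

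The gap is in the converse. Your justification---that ``no term survives which is not forced to vanish by flexibility, so the implication runs in both directions''---is not a valid argument: a sum of three instances of the flexibility relation can vanish without each summand vanishing, so \eqref{equa2} does not reduce to $\mathfrak{as}_\alpha(x,y,z)+\mathfrak{as}_\alpha(z,y,x)=0$ by bookkeeping alone. You need one more step, which is precisely the one the paper supplies: specialize $x=z$ in the six-term identity. The third and sixth terms cancel, the fourth and fifth cancel, and the first two each become $\mathfrak{as}_\alpha(x,y,x)$, leaving $2\,\mathfrak{as}_\alpha(x,y,x)=0$, i.e.\ flexibility (dividing by $2$ once more). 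With that specialization added, and with the expected outcome of the expansion corrected as above, your plan becomes a complete proof.
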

\begin{proof}
Let $\A$ be a Hom-flexible algebra. Then by lemma \ref{flexi23} it
is equivalent to
      $\mathfrak{as_\alpha}(x,y,z)+\mathfrak{as_\alpha}(z,y,x)=0$, for any $x,y,z$ in $V$,
       where  $\mathfrak{as_\alpha}$ is the Hom-associator associated to $\A$.
 This implies
\begin{eqnarray}\label{equa} &&\mathfrak{as_\alpha}(x,y,z)+
\mathfrak{as_\alpha}(z,y,x)+\mathfrak{as_\alpha}(x,z,y)
\\&& +\mathfrak{as_\alpha}(y,z,x)-\mathfrak{as_\alpha}(y,x,z)-\mathfrak{as_\alpha}(z,x,y)=0\nonumber
 \end{eqnarray}
By expansion, the previous relation is equivalent to (\ref{equa2}).

Conversely, assume that we have the condition (\ref{equa2}), by
setting $x=z$ in the equation (\ref{equa}), one gets
$\mathfrak{as_\alpha}(x,y,x)=0$, Therefore $\A$ is a Hom-flexible algebra.
\end{proof}

\subsection{Hom-dialgebras}
 The Hom-dialgebra structure introduced by Yau extends to Hom-algebra setting
the classical dialgebra structure introduced by Loday.
\begin{definition}[\cite{Yau:EnvLieAlg}]
A Hom-dialgebra is a tuple $(V,\dashv,\vdash,\alpha)$, where
$\dashv,\vdash : V\times V\rightarrow V$ are bilinear maps and
$\alpha : V\rightarrow V$ is a linear map  such that the following
five identities are satisfied for $x,y,z\in V$
\begin{eqnarray}
\alpha (x)\dashv (y\dashv z)&=&(x\dashv y)\dashv \alpha (z)=\alpha
(x)\dashv (y\vdash z)\\
\alpha (x)\vdash (y\vdash z)&=&(x\vdash y)\vdash \alpha (z)=\alpha
(x)\vdash (y\dashv z)\\
\alpha (x)\vdash (y\dashv z)&=&(x\vdash y)\dashv \alpha (z)
\end{eqnarray}
\end{definition}
We recover the classical dialgebra when $\alpha$ is the identity
map. A morphism of Hom-dialgebras is a linear map that is compatible
with $\alpha$ and the two multiplications $\vdash$ and $\dashv$.

Note that $(V,\dashv,\alpha)$ and $(V,\vdash,\alpha)$ are
Hom-associative algebras. In the classical case, Loday showed that
the commutator defined for $x,y\in V$ by $[x,y]=x\dashv y-y\vdash x$
defines a Leibniz algebra on $V$. This result is extended to
Hom-algebra setting in the next Section.{\ding{167}}
\section{Hom-Leibniz algebras and Hom-Lie algebras}
\subsection{Hom-Leibniz algebras}

A class of quasi Leibniz algebras was introduced in \cite{LS2} in
connection to general quasi-Lie algebras following the standard
Loday's conventions for Leibniz algebras (i.e. right Loday
algebras).

\begin{definition}[\cite{LS2}]
A Hom-Leibniz algebra is a triple $(V, [\cdot, \cdot], \alpha)$
consisting of a linear space $V$, bilinear map $[\cdot, \cdot]:
V\times V \rightarrow V$ and a linear map $\alpha: V \rightarrow
V$  satisfying
\begin{equation} \label{Leibnizalgident}
 [[x,y],\alpha(z)]=[[x,z],\alpha (y)]+[\alpha(x),[y,z]]
\end{equation}
\end{definition}

In terms of the (right) adjoint homomorphisms $Ad_y: V\rightarrow V$
defined by $Ad_{y}(x)=[x,y]$, the identity \eqref{Leibnizalgident}
can be written as
\begin{equation} \label{LeibnizalgidentAdDeriv}
Ad_{\alpha(z)}([x,y]) = [Ad_{z}(x),\alpha(y)] +[\alpha(x),Ad_{z}(y)]
\end{equation}
or in pure operator form
\begin{equation} \label{LeibnizalgidentAdOper}
Ad_{\alpha(z)} \circ Ad_{y} = Ad_{\alpha(y)} \circ Ad_{z} +
Ad_{Ad_{z}(y)} \circ \alpha
\end{equation}

\subsection{Hom-Lie algebras}
The Hom-Lie algebras were initially introduced by Hartwig, Larson
and Silvestrov in \cite{HLS} motivated initially by examples of
deformed Lie algebras coming from twisted discretizations of vector
fields.

\begin{definition}[\cite{HLS}]
A Hom-Lie algebra is a triple $(V, [\cdot, \cdot], \alpha)$
consisting of
 a linear space $V$, bilinear map $[\cdot, \cdot]: V\times V \rightarrow V$ and
 a linear map $\alpha: V \rightarrow V$
 satisfying
\begin{eqnarray}\label{skewsymmetry} [x,y]=-[y,x] \quad
{\text{(skewsymmetry)}} \\ \label{HomJacobi}
\circlearrowleft_{x,y,z}{[\alpha(x),[y,z]]}=0 \quad
{\text{(Hom-Jacobi identity)}}
\end{eqnarray}
for all $x, y, z$ from $V$, where $\circlearrowleft_{x,y,z}$ denotes
summation over the cyclic permutation on $x,y,z$.
\end{definition}
Using the skew-symmetry, one may write the Hom-Jacobi identity in
the form (\ref{LeibnizalgidentAdDeriv}). Hence, if a Hom-Leibniz
algebra is skewsymmetric then it is a Hom-Lie algebra.

\begin{example}
Let $\{e_1,e_2,e_3\}$  be a basis of a $3$-dimensional linear space
$V$ over $\K$. The following bracket and   linear map $\alpha$ on
$V$ define a Hom-Lie algebra over $\K^3${\rm :}
$$
\begin{array}{cc}
\begin{array}{ccc}
 [ e_1, e_2 ] &= &a e_1 +b e_3 \\ {}
 [e_1, e_3 ]&=& c e_2  \\ {}
 [ e_2,e_3 ] & = & d e_1+2 a e_3,
 \end{array}
 & \quad

  \begin{array}{ccc}
  \alpha (e_1)&=&e_1 \\
 \alpha (e_2)&=&2 e_2 \\
   \alpha (e_3)&=&2 e_3
  \end{array}

\end{array}
$$
with $[ e_2, e_1 ]$, $[e_3, e_1 ]$ and  $[ e_3,e_2 ]$ defined via
skewsymmetry. It is not a Lie algebra if and only if $a\neq0$ and
$c\neq0$, since
$$[e_1,[e_2,e_3]]+[e_3,[e_1,e_2]]
+[e_2,[e_3,e_1]]= a c e_2.$$
\end{example}

\begin{example}[\textbf{Jackson $\sll$}]

In this example, we will consider the Hom-Lie algebra Jackson $\sll$
which is a Hom-Lie deformation of the classical Lie algebra  $\sll$
defined by $[ e_1,e_2] = 2e_2,\  [ e_1,e_3] = -2e_3 , \  [ e_2,e_3]
=e_1$. This family of Hom-Lie algebras was constructed in \cite{LS3}
using  a quasi-deformation scheme based on discretizing by means of
Jackson $q$-derivations a representation of $\sll$ by
one-dimensional vector fields (first order ordinary differential
operators) and using the twisted commutator bracket defined in
\cite{HLS}. The Hom-Lie algebra Jackson $\sll$ is a $3$-dimensional
vector space over $\K$ with the vector space bases $\{e_1,e_2,e_3\}$ and with the bilinear bracket multiplication defined
on the basis by
$$[ e_1,e_2]_t = 2e_2,\ \
[ e_1,e_3]_t = -2e_3-2te_3 , \ \ [ e_2,e_3]_t
=e_1+\frac{t}{2}e_1,$$ and by skew-symmetry for
$[ e_2,e_1]_t, [ e_3,e_1]_t$ and $[e_3,e_2]_t$.
The linear map $\alpha_t$ is defined by
$$\alpha_t(e_1)=
e_1, \quad \alpha_t(e_2)=\frac{2+t}{2(1+t)}e_2=e_2+
\sum_{k=0}^{\infty}{\frac{(-1)^k}{2}t^k\ } e_2, \quad
\alpha_t(e_3)=e_3+\frac{t}{2} \, e_3.$$

Thus Jackson $\sll$ algebra is a Hom-Lie algebra
deformation of $\sll$. See \cite{HomDeform} for other examples of Hom-Lie algebra
deformation of $\sll$.
\end{example}

\subsection{Hom-Lie and Hom-Leibniz Functors}

 We provide in the following a different way for constructing Hom-Lie
algebras by extending the fundamental construction of Lie algebras
from associative algebras via commutator bracket.
\begin{theorem}[\cite{MS}]
Let $(V,\mu,\alpha)$ be a Hom-associative algebra. The Hom-algebra
 $(V,[\cdot,\cdot],\alpha)$, where the bracket is defined for all $x,y \in
V$ by
$$
[ x,y ]=\mu (x,y)-\mu (y,x )
$$
is a Hom-Lie algebra.
\end{theorem}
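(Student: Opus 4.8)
The plan is to verify the two defining axioms of a Hom-Lie algebra for the bracket $[x,y]=\mu(x,y)-\mu(y,x)$, namely skew-symmetry and the Hom-Jacobi identity. Skew-symmetry is immediate and requires no hypothesis: $[y,x]=\mu(y,x)-\mu(x,y)=-[x,y]$. So the whole content is in the Hom-Jacobi identity
$$
\circlearrowleft_{x,y,z}[\alpha(x),[y,z]]=0.
$$

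First I would expand $[\alpha(x),[y,z]]$ using bilinearity of $\mu$: writing $[y,z]=\mu(y,z)-\mu(z,y)$, we get
$$
[\alpha(x),[y,z]]=\mu(\alpha(x),\mu(y,z))-\mu(\alpha(x),\mu(z,y))-\mu(\mu(y,z),\alpha(x))+\mu(\mu(z,y),\alpha(x)).
$$
This produces four terms, each of which is (up to sign) one of the two summands in a Hom-associator. Indeed, recalling $\mathfrak{as}_\alpha(a,b,c)=\mu(\alpha(a),\mu(b,c))-\mu(\mu(a,b),\alpha(c))$, I can regroup the four terms above as $\mathfrak{as}_\alpha(x,y,z)-\mathfrak{as}_\alpha(x,z,y)$ plus the two ``mixed'' terms $-\mu(\mu(y,z),\alpha(x))+\mu(\mu(z,y),\alpha(x))$. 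The key observation is that the Hom-associative axiom \eqref{Hom-ass}, $\mathfrak{as}_\alpha\equiv 0$, lets me rewrite each term of the form $\mu(\alpha(\cdot),\mu(\cdot,\cdot))$ as the corresponding $\mu(\mu(\cdot,\cdot),\alpha(\cdot))$, so all twelve terms arising in the full cyclic sum can be brought to a common shape.

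The cleanest route: expand the cyclic sum into twelve terms, then apply Hom-associativity to convert every ``$\alpha$ on the left'' term into an ``$\alpha$ on the right'' term (or vice versa). After this normalization the twelve terms should cancel in pairs: for instance $\mu(\mu(x,y),\alpha(z))$ appears once with a $+$ sign coming from expanding $[\alpha(z),[x,y]]$-type contributions and once with a $-$ sign from another cyclic term, exactly as in the classical proof that $[\cdot,\cdot]$ on an associative algebra satisfies the Jacobi identity. Concretely I would track the six ``words'' $\mu(\mu(a,b),\alpha(c))$ as $(a,b,c)$ ranges over the permutations of $(x,y,z)$ and check each occurs with coefficient $+1$ and $-1$.

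The main obstacle — really the only thing to be careful about — is bookkeeping: making sure the signs from expanding the inner brackets and the cyclic permutation are combined correctly, and that Hom-associativity is invoked in the right direction for each of the twelve terms so that everything is expressed in a single normal form before cancellation. There is no analytic or structural difficulty; once the terms are all written as $\mu(\mu(\cdot,\cdot),\alpha(\cdot))$ the cancellation is purely formal, and the identity map case recovers the classical commutator-to-Lie-algebra construction as a sanity check.
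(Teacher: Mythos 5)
Your proposal is correct and follows essentially the same route as the paper: skew-symmetry is immediate, and the Hom-Jacobi identity is verified by expanding the cyclic sum into twelve $\mu$-terms and cancelling them via the Hom-associativity identity $\mu(\alpha(a),\mu(b,c))=\mu(\mu(a,b),\alpha(c))$ (the paper pairs each ``$\alpha$ on the left'' term directly with its ``$\alpha$ on the right'' partner, while you normalize all terms to one shape first, but this is only a bookkeeping difference). The cancellation pattern you describe — each word $\mu(\mu(a,b),\alpha(c))$ occurring once with $+1$ and once with $-1$ over the six permutations — is exactly what happens.
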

\begin{proof}
The bracket is obviously skewsymmetric and with a direct computation
we have
$$
\begin{array}{c}
  [\alpha (x),[y,z]]-[[x,y],\alpha (z)]-[\alpha(y),[y,z]]= \\
  \mu(\alpha (x),\mu(y,z))-\mu(\alpha(x),\mu(z,y))-\mu(\mu(y,z),\alpha(x))+
\mu(\mu(z,y),\alpha(x))\\
-\mu(\mu(x,y),\alpha(z))+\mu(\mu(y,x),\alpha(z))+\mu(\alpha
(z),\mu(x,y))-\mu(\alpha (z),\mu(y,x))\\ -\mu(\alpha
(y),\mu(x,z))+\mu(\alpha
(y),\mu(z,x))+\mu(\mu(x,z),\alpha(y))-\mu(\mu(z,x),\alpha(y))=0
\end{array}
$$
\end{proof}

A similar construction is obtained for Hom-dialgebra.
\begin{theorem}[\cite{Yau:EnvLieAlg}]
Let $(V,\dashv,\vdash,\alpha)$ be a Hom-dialgebra $(V,\mu,\alpha)$. The Hom-algebra $(V,[\cdot,\cdot],\alpha)$, where the bracket is defined for all $x,y \in
V$ by
$$
[x,y]=x\dashv y-y\vdash x$$
is a Hom-Leibniz algebra.
\end{theorem}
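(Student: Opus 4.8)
The plan is to verify directly that the bracket $[x,y]=x\dashv y-y\vdash x$ satisfies the Hom-Leibniz identity \eqref{Leibnizalgident}, namely $[[x,y],\alpha(z)]=[[x,z],\alpha(y)]+[\alpha(x),[y,z]]$, by expanding every bracket into the two dialgebra operations and reducing all nine resulting triple products via the five defining identities of a Hom-dialgebra. This is the Hom-analogue of Loday's original computation, so no cleverness beyond careful bookkeeping should be needed; the only subtlety is keeping track of which of the five identities applies to each term.

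\begin{proof}
The bracket being bilinear, it suffices to check \eqref{Leibnizalgident}. Expanding the left-hand side,
\begin{align*}
[[x,y],\alpha(z)] &= (x\dashv y-y\vdash x)\dashv\alpha(z)-\alpha(z)\vdash(x\dashv y-y\vdash x)\\
&= (x\dashv y)\dashv\alpha(z)-(y\vdash x)\dashv\alpha(z)-\alpha(z)\vdash(x\dashv y)+\alpha(z)\vdash(y\vdash x).
\end{align*}
Similarly, expanding the two terms on the right-hand side,
\begin{align*}
[[x,z],\alpha(y)] &= (x\dashv z)\dashv\alpha(y)-(z\vdash x)\dashv\alpha(y)-\alpha(y)\vdash(x\dashv z)+\alpha(y)\vdash(z\vdash x),\\
[\alpha(x),[y,z]] &= \alpha(x)\dashv(y\dashv z)-\alpha(x)\dashv(z\vdash y)-(y\dashv z)\vdash\alpha(x)+(z\vdash y)\vdash\alpha(x).
\end{align*}
Now apply the Hom-dialgebra axioms term by term. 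From the first displayed axiom, $(x\dashv y)\dashv\alpha(z)=\alpha(x)\dashv(y\dashv z)$ and $(x\dashv z)\dashv\alpha(y)=\alpha(x)\dashv(z\dashv y)=\alpha(x)\dashv(z\vdash y)$, using also $\alpha(x)\dashv(z\vdash y)=\alpha(x)\dashv(z\dashv y)$. From the second axiom, $\alpha(z)\vdash(y\vdash x)=(z\vdash y)\vdash\alpha(x)$ and $\alpha(y)\vdash(z\vdash x)=(y\vdash z)\vdash\alpha(x)$, while $(y\dashv z)\vdash\alpha(x)=(y\vdash z)\vdash\alpha(x)$ again by the second axiom. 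From the third (bar) axiom, $(y\vdash x)\dashv\alpha(z)=\alpha(y)\vdash(x\dashv z)$ and $(z\vdash x)\dashv\alpha(y)=\alpha(z)\vdash(x\dashv y)$. Substituting these identifications, every term produced on the left-hand side is matched by exactly one term on the right-hand side and vice versa: $(x\dashv y)\dashv\alpha(z)$ cancels against $\alpha(x)\dashv(y\dashv z)$; $-(y\vdash x)\dashv\alpha(z)$ cancels against $-\alpha(y)\vdash(x\dashv z)$; $-\alpha(z)\vdash(x\dashv y)$ cancels against $-(z\vdash x)\dashv\alpha(y)$; $\alpha(z)\vdash(y\vdash x)$ cancels against $(z\vdash y)\vdash\alpha(x)$; and the four remaining right-hand terms $(x\dashv z)\dashv\alpha(y)$, $\alpha(y)\vdash(z\vdash x)$, $-\alpha(x)\dashv(z\vdash y)$, $-(y\dashv z)\vdash\alpha(x)$ cancel in pairs among themselves via the same axioms. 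Hence both sides agree, so $[\cdot,\cdot]$ satisfies the Hom-Leibniz identity, and $(V,[\cdot,\cdot],\alpha)$ is a Hom-Leibniz algebra.
\end{proof}

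The main obstacle, such as it is, is purely organizational: there are nine triple products on the left together with twelve on the right, and each must be routed through the correct one of the five axioms so that the cancellation is transparent; aligning the terms in a fixed order (all $\dashv$-outer products, then all $\vdash$-outer products) before applying the axioms makes the bookkeeping manageable. No new ideas beyond the Hom-dialgebra axioms and bilinearity of the bracket are required.
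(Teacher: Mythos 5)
Your verification is correct and is exactly the expected argument: the survey only states this theorem (citing Yau) without proof, and the natural proof is precisely the direct expansion of all brackets into $\dashv$ and $\vdash$ followed by term-by-term matching via the five axioms, mirroring the paper's proof of the Hom-associative $\to$ Hom-Lie case. Your four pairings between the left- and right-hand sides and the two internal cancellations on the right all check out. One point deserves a caveat, though: the step $(y\dashv z)\vdash\alpha(x)=(y\vdash z)\vdash\alpha(x)$, which you need to kill the pair $\alpha(y)\vdash(z\vdash x)-(y\dashv z)\vdash\alpha(x)$, does \emph{not} follow from the second axiom as it is literally printed in the survey, namely $\alpha(x)\vdash(y\vdash z)=(x\vdash y)\vdash\alpha(z)=\alpha(x)\vdash(y\dashv z)$; that displayed chain never produces a term of the form $(a\dashv b)\vdash\alpha(c)$. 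It does follow from Yau's actual fourth axiom $(x\dashv y)\vdash\alpha(z)=\alpha(x)\vdash(y\vdash z)$, which is what the survey's last equality in that line should read (the printed version is a typo: with $\alpha=\id$ it does not reduce to Loday's dialgebra axioms). So your proof is correct against the intended (standard) definition of a Hom-dialgebra, but you should cite the axiom in its correct form rather than ``again by the second axiom'' as displayed. A final cosmetic remark: after expansion there are four triple products on the left and eight on the right, not nine and twelve.
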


Therefore, we have a functor $HLie$ (resp. $HLeib$) from the category of Hom-associative algebras $\textbf{HomAs}$ (resp. category of Hom-dialgebras $\textbf{HomDi}$) to the category of Hom-Lie algebras $\textbf{HomLie}$ (resp. category of Hom-Leibniz algebras $\textbf{HomLeib}$).
Conversely, an enveloping Hom-associative algebra $U_{HLie}(\mathfrak{g})$
(resp. enveloping Hom-dialgebra $U_{HLeib}(L)$) of  a Hom-Lie algebra $\mathfrak{g}$ (resp. Hom-Leibniz algebra $L$) are constructed in \cite{Yau:EnvLieAlg}. Hence, $U_{HLie}$ is the left adjoint functor of $HLie$ and $U_{HLeib}$ is the left adjoint functor of $HLeib$.
\subsection{Hom-Poisson algebras}
We introduce in the following the notion of Hom-Poisson structure
which emerges naturally in   deformation theory of Hom-associative
algebras, see \cite{HomDeform}.

\begin{definition}[\cite{HomDeform}]
A \emph{Hom-Poisson algebra} is a quadruple $(V,\mu,
\{\cdot,\cdot\}, \alpha)$ consisting of
 a vector space $V$, bilinear maps $\mu: V\times V \rightarrow V$ and
  $\{\cdot, \cdot\}: V\times V \rightarrow V$, and
 a linear map $\alpha: V \rightarrow V$
 satisfying
 \begin{enumerate}
\item $(V,\mu, \alpha)$ is a commutative Hom-associative algebra,
\item $(V, \{\cdot,\cdot\}, \alpha)$ is a Hom-Lie algebra,
\item
for all $x, y, z$ in $V$,
\begin{equation}\label{CompatibiltyPoisson}
\{\alpha (x) , \mu (y,z)\}=\mu (\alpha (y), \{x,z\})+ \mu (\alpha
(z), \{x,y\}).
\end{equation}
\end{enumerate}
\end{definition}
 Condition \eqref{CompatibiltyPoisson}
expresses the compatibility between the multiplication and the
Poisson bracket. It can be
 reformulated equivalently, for all $x, y, z$ in $V$, as
\begin{equation}\label{CompatibiltyPoissonLeibform}
\{\mu(x,y),\alpha (z) \}=\mu (\{x,z\},\alpha (y))+\mu (\alpha (x),
\{y,z\})
\end{equation}
Note that in this form it means that $ad_z
(\cdot) = \{\cdot,z\}$ is a sort of generalization of a derivation
of an associative  algebra, and also it resembles the identity
\eqref{Leibnizalgident} in the definition of Leibniz algebra.
We recover the classical Leibniz identity when $\alpha$ is the identity map.

\begin{example}\label{example1HomPoisson}
Let $\{e_1,e_2,e_3\}$  be a basis of a $3$-dimensional vector space
$V$ over $\K$. The following multiplication $\mu$, skew-symmetric
bracket and linear map $\alpha$ on $V$ define a Hom-Poisson algebra
over $\K^3${\rm :}
$$
\begin{array}{ll}
\begin{array}{lll}
 \mu ( e_1,e_1)&=&  e_1, \ \\
\mu ( e_1,e_2)&=& \mu ( e_2,e_1)=e_3,\\
 \end{array}
 & \quad
 \begin{array}{lll}
\{ e_1,e_2 \}&=& a e_2+ b e_3, \ \\
\{ e_1, e_3 \}&=& c e_2+ d e_3, \ \\
  \end{array}
\end{array}
$$

$$  \alpha (e_1)= \lambda_1 e_2+\lambda_2 e_3 , \quad
 \alpha (e_2) =\lambda_3 e_2+\lambda_4 e_3  , \quad
   \alpha (e_3)=\lambda_5 e_2+\lambda_6 e_3
$$
where $a,b,c,d,\lambda_1,
\lambda_2,\lambda_3,\lambda_4,\lambda_5,\lambda_6 $ are parameters
in $\K$.
\end{example}

\subsection{Hom-Lie Superalgebras} Hom-Lie Superalgebras is a
subclass of quasi-Lie algebras introduced in \cite{LS2}. They were
studied in \cite{AmmarMakhlouf2009}, where construction procedures
are provided.
\begin{definition}
A Hom-Lie superalgebra is a triple $(V, [\cdot, \cdot], \alpha)$
consisting of
 a superspace $V$, an even bilinear map $[\cdot, \cdot]: V\times V \rightarrow V$ and
 an even superspace homomorphism $\alpha: V \rightarrow V$
 satisfying
\begin{eqnarray} \label{skewsymHomLie}
 &[x,y]=-(-1)^{\mid x\mid\mid y\mid}[y,x]\\
\label{JacobyHomsuperLie}
 &(-1)^{\mid x\mid\mid z\mid}[\alpha(x),[y,z]]+(-1)^{\mid z\mid\mid y\mid}
 [\alpha(z),[x,y]]+(-1)^{\mid y \mid\mid x\mid}[\alpha(y),[z,x]]=0
\end{eqnarray}
for all homogeneous element $x, y, z$ in $V$.
\end{definition}

The identity \eqref{JacobyHomsuperLie} is called Hom-superJacobi identity, while the identity
\eqref{JacobyHomsuperLie} expresses the usual supersymmetry of the bracket.
\begin{remark}
We recover the classical Lie superalgebra when $\alpha
=\id$. The Hom-Lie algebras  algebras are obtained when the part of
parity one is trivial.
\end{remark}
\begin{example}[2-dimensional abelian Hom-Lie  superalgebra]

Every  bilinear map $\mu$ on a $2$- dimensional linear superspace
$V=V_0 \oplus V_1$, where $V_0$ is generated by $x$ and $V_1$ is
generated by $y$ and  such that $[x,y]=0$ defines a Hom-Lie
superalgebra for any homomorphism $\alpha$ of superalgebra. Indeed,
the graded Hom-Jacobi identity is satisfied for any triple
$(x,x,y)$.
\end{example}

\begin{example}[Affine Hom-Lie  superalgebra ]
Let $V=V_0 \oplus V_1$ be a $3$-dimensional superspace where $V_0$
is generated by $\{e_1 , e_2\}$ and $V_1$ is generated by $e_3$. The
triple
 $(V,[\cdot,\cdot],\alpha)$ is a Hom-Lie superalgebra defined by $[e_1
 ,e_2]=e_1,[e_1,e_3]=[e_2,e_3]=[e_3,e_3]=0$ and $\alpha$ is any
 homomorphism.
\end{example}

\begin{example}[A $q$-deformed  Witt superalgebra, \cite{AmmarMakhlouf2009}]
We provide an example of infinite dimensional Hom-Lie
superalgebra which is given by a realization of the $q$-deformed Witt superalgebra
constructed in
\cite{AmmarMakhlouf2009}. It corresponds to  a superspace $\mathcal{V}$   generated by the
  elements $\{X_n\}_{n\in \mathbb{N}}$
  of parity $0$
   and the elements $\{G_n\}_{n\in \mathbb{N}}$ of parity $1$.

 Let $q\in \mathbb{C}\backslash \{0,1\}$ and $n\in\mathbb{N}$, we set
 $\{n\}=\frac{1-q^n}{1-q}$, a $q$-number. The $q$-numbers have the following
 properties $\{n+1\}=1+q \{n\}=\{n\}+q^n$ and $\{n+m\}=\{n\}+q^n\{m\}.$

  Let $[-,-]_{\sigma}$ be a bracket on the
  superspace $\mathcal{V}$  defined by
  \begin{align*}
[ X_n,X_m]_\sigma &=(\{m\}-\{n\})X_{n+m} \\
 [X_n,G_m]_\sigma &= (q^n \{m+1\}-q^{m+1}\{n\}) G_{n+m}
\end{align*}
The others brackets are obtained by supersymmetry or are $0$.

Let $\alpha$ be an even linear map on $\mathcal{V}$ defined
on the generators by
\begin{align*}
\alpha ( X_n ) &=(1+ q^n)X_{n} \\
 \alpha ( G_n ) &=(1+ q^{n+1})G_{n}
\end{align*}
Then the triple  $(\mathcal{V},[-,-]_{\sigma},\alpha)$
   is a Hom-Lie
  superalgebra.

\end{example}

In the following, we show that the supercommutator bracket defined
using the multiplication in a Hom-associative superalgebra leads
naturally to Hom-Lie superalgebra.
\begin{theorem}[\cite{AmmarMakhlouf2009}]\label{Supercommutator}
Let $(V,\mu,\alpha )$ be a Hom-associative superalgebra. The Hom-superalgebra $(V,[\cdot,\cdot],\alpha )$, where the bracket (super bracket) is defined for all homogeneous elements $x,y\in V$ by
   $$ [x,y] = \mu (x,y) - ( - 1)^{ | x | | y |} \mu (y,x)$$
and extended by linearity to all elements,  is a Hom-Lie superalgebra.
\end{theorem}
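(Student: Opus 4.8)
The plan is to verify the two defining identities of a Hom-Lie superalgebra directly from the Hom-associativity of $(V,\mu,\alpha)$, mimicking the proof of the ungraded Theorem above but carrying the Koszul sign $(-1)^{|x||y|}$ through every rearrangement. Since both $\mu$ and $\alpha$ are even, the bracket $[x,y]=\mu(x,y)-(-1)^{|x||y|}\mu(y,x)$ sends homogeneous elements of parities $|x|,|y|$ to (a sum of) elements of parity $|x|+|y|$, so it is a well-defined even bilinear map; I would note this first, and that $\alpha[x,y]=[\alpha x,\alpha y]$ holds automatically (or is not needed) exactly as in the ungraded case.

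First I would check supersymmetry \eqref{skewsymHomLie}: from the definition, $[y,x]=\mu(y,x)-(-1)^{|x||y|}\mu(x,y)$, and multiplying by $-(-1)^{|x||y|}$ gives $-(-1)^{|x||y|}\mu(y,x)+\mu(x,y)=[x,y]$, using $((-1)^{|x||y|})^2=1$. Then I would turn to the Hom-super-Jacobi identity \eqref{JacobyHomsuperLie}. The strategy is to expand $[\alpha(x),[y,z]]$ fully into four $\mu$-terms: writing $[y,z]=\mu(y,z)-(-1)^{|y||z|}\mu(z,y)$,
\begin{align*}
[\alpha(x),[y,z]] &= \mu(\alpha(x),\mu(y,z)) - (-1)^{|y||z|}\mu(\alpha(x),\mu(z,y))\\
&\quad - (-1)^{|x|(|y|+|z|)}\mu(\mu(y,z),\alpha(x)) + (-1)^{|x|(|y|+|z|)+|y||z|}\mu(\mu(z,y),\alpha(x)),
\end{align*}
and similarly for the two cyclically permuted brackets, each multiplied by its prefactor $(-1)^{|x||z|}$, $(-1)^{|z||y|}$, $(-1)^{|y||x|}$ respectively. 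This produces twelve signed $\mu$-terms. On each term I would apply Hom-associativity \eqref{Hom-ass}, which in the super setting reads $\mu(\alpha(a),\mu(b,c))=\mu(\mu(a,b),\alpha(c))$ with no extra sign (both maps are even), to rewrite every "$\alpha$ on the left" term as an "$\alpha$ on the right" term. The claim is then that the twelve terms cancel in pairs; the bookkeeping is to match, say, $\mu(\mu(y,z),\alpha(x))$ coming from the expansion of $[\alpha(x),[y,z]]$ against a term arising from $[\alpha(y),[z,x]]$ or $[\alpha(z),[x,y]]$ after one application of \eqref{Hom-ass}, and to confirm the accumulated signs are opposite.

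The main obstacle is purely the sign bookkeeping: one must confirm that for each cancelling pair the product of (the cyclic prefactor) $\times$ (the sign picked up in expanding the inner bracket) $\times$ (the sign, if any, from reassociating) agrees up to an overall minus. A clean way to organize this is to use the identity $|x|+|y|+|z|$ appearing in the prefactors and the standard manipulation $(-1)^{|x||z|}(-1)^{|x||y|}=(-1)^{|x|(|y|+|z|)}$, so that the prefactor of the "$\mu(\mu(\cdot,\cdot),\alpha(\cdot))$" terms reorganizes to match the Koszul sign of the corresponding term in the adjacent summand; I expect this to work out exactly as in the classical Lie superalgebra computation, the only novelty being that $\alpha$ rides along on one slot throughout. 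If the direct twelve-term cancellation looks unwieldy, an alternative I would consider is to reduce to the known ungraded Theorem via the Koszul sign rule / functor from super vector spaces to vector spaces, but the direct computation is more self-contained and is the one I would present.
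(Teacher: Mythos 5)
Your proposal is correct and follows essentially the same route as the paper: supersymmetry is read off directly from the definition, and the Hom-super-Jacobi identity is verified by expanding the cyclic sum into twelve signed $\mu$-terms, which cancel in six pairs after one application of Hom-associativity to each "$\alpha$ on the left" term (the signs work out exactly as you anticipate, e.g. $(-1)^{|x||z|}\mu(\alpha(x),\mu(y,z))$ against $-(-1)^{|z||x|}\mu(\mu(x,y),\alpha(z))$). The paper simply displays the full twelve-term expansion and notes it vanishes, which is the computation you describe.
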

\begin{proof}
The bracket is obviously supersymmetric and with a direct
computation we have
$$
\begin{array}{c}
  (-1)^{|x||z|}[\alpha (x),[y,z]]+
  (-1)^{|z||y|}[\alpha (z),[x,y]]+
  (-1)^{|y||x|}[\alpha(y),[z,x]]= \\
  (-1)^{|x||z|}\mu(\alpha (x),\mu(y,z))-(-1)^{|x||z|+|y||z|}\mu(\alpha(x),\mu(z,y))
  \\-(-1)^{|x||y|}\mu(\mu(y,z),\alpha(x))+(-1)^{|x||y|+|y||z|}
\mu(\mu(z,y),\alpha(x))\\
+(-1)^{|y||x|}\mu(\alpha
(y),\mu(z,x))-(-1)^{|x||y|+|z||x|}\mu(\alpha(y),\mu(x,z))
 \\ -(-1)^{|y||z|}\mu(\mu(z,x),\alpha(y))+(-1)^{|y||z|+|z||x|}
\mu(\mu(x,z),\alpha(y))\\
+(-1)^{|z||y|}\mu(\alpha(z),\mu(x,y))-(-1)^{|y||z|+|x||y|}\mu(\alpha(z),\mu(y,x))
  \\-(-1)^{|z||x|}\mu(\mu(x,y),\alpha(z))+(-1)^{|z||x|+|x||y|}
\mu(\mu(y,x),\alpha(z)) =0
\end{array}
$$
\end{proof}
The following theorem gives a way to construct Hom-Lie
superalgebras, starting from a regular  Lie superalgebra and an even
superalgebra endomorphism.

\begin{theorem}[\cite{AmmarMakhlouf2009}]
\label{thm:SALmorphism}
Let $(V,[\cdot,\cdot])$ be a Lie superalgebra  and  $\alpha :
V\rightarrow V$ be an even  Lie superalgebra endomorphism. Then
$(V,[\cdot,\cdot]_\alpha,\alpha)$, where
$[x,y]_\alpha=\alpha([x,y])$,  is a Hom-Lie superalgebra.

Moreover, suppose that  $(V',[\cdot,\cdot]')$ is another Lie
superalgebra and  $\alpha ' : V'\rightarrow V'$ is a Lie
superalgebra endomorphism. If $f:V\rightarrow V'$ is a Lie
superalgebra morphism that satisfies $f\circ\alpha=\alpha'\circ f$
then
$$f:(V,[\cdot,\cdot]_\alpha,\alpha)\longrightarrow (V',[\cdot,\cdot]',\alpha ')
$$
is a morphism of Hom-Lie superalgebras.
\end{theorem}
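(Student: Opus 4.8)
The plan is to verify directly the two defining axioms \eqref{skewsymHomLie} and \eqref{JacobyHomsuperLie} of a Hom-Lie superalgebra for the twisted bracket $[x,y]_\alpha=\alpha([x,y])$, and then to check the two compatibility conditions for the morphism statement. The whole argument rests on the single observation that, $\alpha$ being an even Lie superalgebra endomorphism, it preserves parities, $|\alpha(x)|=|x|$, and commutes with the bracket in the strong form $[\alpha(x),\alpha(y)]=\alpha([x,y])$. This is the superalgebra counterpart of the twisting principle of \cite{Yau:homology} already used above for Lie and associative algebras.

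First I would treat supersymmetry. For homogeneous $x,y$, linearity of $\alpha$ together with supersymmetry of $[\cdot,\cdot]$ give $[x,y]_\alpha=\alpha([x,y])=-(-1)^{|x||y|}\alpha([y,x])=-(-1)^{|x||y|}[y,x]_\alpha$, which is \eqref{skewsymHomLie}; since $\alpha$ is even, the sign exponents for $\alpha(x),\alpha(y),\alpha(z)$ are the same as for $x,y,z$. For the Hom-superJacobi identity, I would compute each term by pulling $\alpha$ through the bracket twice:
\begin{equation*}
[\alpha(x),[y,z]_\alpha]_\alpha=\alpha\bigl([\alpha(x),\alpha([y,z])]\bigr)=\alpha^{2}\bigl([x,[y,z]]\bigr),
\end{equation*}
and analogously for the two remaining cyclic terms. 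Summing with the Koszul signs and factoring out the linear map $\alpha^{2}$ yields
\begin{equation*}
\alpha^{2}\Bigl((-1)^{|x||z|}[x,[y,z]]+(-1)^{|z||y|}[z,[x,y]]+(-1)^{|y||x|}[y,[z,x]]\Bigr)=\alpha^{2}(0)=0,
\end{equation*}
the inner expression vanishing by the ordinary super-Jacobi identity in $(V,[\cdot,\cdot])$. Hence $(V,[\cdot,\cdot]_\alpha,\alpha)$ is a Hom-Lie superalgebra.

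For the morphism assertion I would check that $f$ intertwines the two twisting maps and the two twisted brackets. The first condition $f\circ\alpha=\alpha'\circ f$ is the hypothesis. For the second, writing $[x,y]'_{\alpha'}=\alpha'([x,y]')$ for the twisted bracket on $V'$, the fact that $f$ is a Lie superalgebra morphism together with the intertwining hypothesis gives $f([x,y]_\alpha)=f(\alpha([x,y]))=\alpha'(f([x,y]))=\alpha'([f(x),f(y)]')=[f(x),f(y)]'_{\alpha'}$, so $f$ is a morphism of Hom-Lie superalgebras.

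I do not expect any genuine obstacle: the computation is short once $\alpha$ is commuted past the bracket, and the only point requiring a little care is confirming that the sign factors are unaffected, which holds precisely because $\alpha$ is even.
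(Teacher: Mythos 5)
Your proof is correct and follows exactly the standard twisting argument that the paper invokes (but does not write out) for this theorem and its analogues such as Theorems \ref{thmConstrGHomAss} and \ref{thmConstrHomAlt}: pull $\alpha$ through the bracket twice so that each Jacobi term becomes $\alpha^{2}$ of the corresponding classical term, with the Koszul signs untouched because $\alpha$ is even. You also correctly read the target of the morphism as carrying the twisted bracket $[\cdot,\cdot]'_{\alpha'}=\alpha'\circ[\cdot,\cdot]'$, which is what the statement intends (compare the notation $\mu'_{\alpha'}$ in the analogous theorems).
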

%

\begin{example}[\cite{AmmarMakhlouf2009}]We construct an example of Hom-Lie superalgebra, which is not
a Lie superalgebra starting from the orthosymplectic Lie
superalgebra. We consider in the sequel the matrix realization of
this Lie superalgebra.

Let $osp(1,2)=V_0 \oplus V_1$  \ be  the Lie superalgebra where
$V_0$ is generated by:
$$ H=\left(
  \begin{array}{ccc}
  1 & 0& 0 \\
  0 &0 & 0 \\
    0 & 0 & -1\\
  \end{array}\right), \ \ X=\left(
  \begin{array}{ccc}
  0 & 0 & 1 \\
  0 & 0 & 0 \\
  0 & 0 & 0\\
  \end{array}
\right),\ \ Y=\left(
  \begin{array}{ccc}
  0 & 0& 0 \\
  0 & 0 & 0 \\
  1 & 0 & 0\\
  \end{array}
\right), $$ and $V_1$ is generated by:

$$ F=\left(
  \begin{array}{ccc}
  0 & 0 & 0 \\
  1 & 0 & 0 \\
  0 & 1 & 0\\
  \end{array}
\right) , \ \ G=\left(
  \begin{array}{ccc}
  0 & 1& 0 \\
  0 & 0 & -1 \\
  0 & 0 & 0\\
  \end{array}
\right) .$$

The defining relations (we give only the ones with non zero values
in the right hand side) are
$$[H,X]=2X, \ [H,Y]=-2Y,\ [X,Y]=H,$$
$$[Y,G]=F,\ [X,F]=G,\ [H,F]=-F, \ [H,G]=G,$$
$$\ [G,F]=H, \ [G,G]=-2X,\ [F,F]=2Y.
$$
 Let $\lambda \in \mathbb{R}^*$, we consider the linear map
$\alpha_{\lambda}: osp(1,2)\rightarrow osp(1,2)$ defined by:
$$
\alpha_{\lambda}(X)=\lambda^2 X, \ \
\alpha_{\lambda}(Y)=\frac{1}{\lambda^2}Y, \ \
\alpha_{\lambda}(H)=H,\ \alpha_{\lambda}(F)=\frac{1}{\lambda}F, \ \
\alpha_{\lambda}(G)=\lambda G .$$

We provide a family of Hom-Lie superalgebras
$osp(1,2)_\lambda=(osp(1,2),[\cdot,\cdot]_{\alpha_\lambda} ,
\alpha_\lambda)$ where the Hom-Lie superalgebra bracket
$[\cdot,\cdot]_{\alpha_\lambda}$ on the basis elements is given, for
$\lambda\neq 0$, by:
$$[H,X]_{\alpha_\lambda}=2\lambda^2 X,\ \
[H,Y]_{\alpha_\lambda}=-\frac{2}{\lambda^2}Y, \ \
[X,Y]_{\alpha_\lambda}=H,$$ $$
[Y,G]_{\alpha_\lambda}=\frac{1}{\lambda}F,\ \
[X,F]_{\alpha_\lambda}=\lambda G,\ \
[H,F]_{\alpha_\lambda}=-\frac{1}{\lambda}F,\ \
[H,G]_{\alpha_\lambda}=\lambda G,$$ $$ [G,F]_{\alpha_\lambda}=H,\ \
[G,G]_{\alpha_\lambda}=-2\lambda^2X,\ \ [F,F]_{\alpha_\lambda}=
\frac{2}{\lambda^2}Y. $$

These Hom-Lie superalgebras are not Lie superalgebras for
$\lambda\neq 1$.

Indeed, the left hand side of the superJacobi identity
(\ref{JacobyHomsuperLie}), for $\alpha =\id$,  leads to
$$[X,[Y,H]-[H,[X,Y]]+[Y,[H,X]]=\frac{2(1-\lambda ^4)}{\lambda^{2}}Y,
$$
and also
$$[H,[F,F]-[F,[H,F]]+[F,[F,H]]=\frac{4(\lambda -1)}{\lambda^{4}}Y.
$$
Then, they do not vanish for $\lambda\neq 1.$

\end{example}

\subsection{Quasi-Lie algebras}
The class of quasi-Lie algebras where introduced by Larsson and
Silvestrov in order to treat within the same framework such a well
known generalizations of Lie algebras as color and Lie
superalgebras, as well as Hom-Lie algebras (see \cite{LS1,LS2}).

Let $\mathfrak{L}_{\K}(V)$ be the set of linear maps of the linear
space $L$ over the field $\K$.
\begin{definition} [ \cite{LS2}] \label{def:quasiLiealg}
A \emph{quasi-Lie algebra} is a tuple

 $(V,[\cdot,\cdot],\alpha,\beta,\omega,\theta)$ consisting of
\begin{itemize}
    \item $V$ is a linear space over $\mathbb{K}$,
    \item $[ \cdot,\cdot]:V\times V\to V$ is a bilinear
      map called a product or bracket in $V$;
    \item $\alpha,\beta:V\to V$, are linear maps,
    \item $\omega:D_\omega\to \mathfrak{L}_{\mathbb{K}}(V)$ and
    $\theta:D_\theta\to \mathfrak{L}_{\mathbb{K}}(V)$
      are maps with domains of definition
      $D_\omega, D_\theta\subseteq V\times
    V$,
\end{itemize}
such that the following conditions hold:
\begin{itemize}
      \item ($\omega$-symmetry) The product satisfies a generalized skew-symmetry condition
        $$[ x,y]=\omega(x,y)[ y,x],
        \quad\text{ for all } (x,y)\in D_\omega ;$$
\item (quasi-Jacobi identity) The bracket satisfies a generalized Jacobi identity
    $$\circlearrowleft_{x,y,z}\big\{\,\theta(z,x)\big([\alpha(x),[ y,z]]+
    \beta [ x,[ y,z] ]\big)\big\}=0,$$
     for all $(z,x),(x,y),(y,z)\in D_\theta$.
\end{itemize}
\end{definition}
Note that $(\omega(x,y)\omega(y,x)-id)[ x,y]=0,$ if $(x,y), (y,x)
\in D_\omega$, which follows from the computation $[
x,y]=\omega(x,y)[ y,x] =\omega(x,y)\omega(y,x)[ x,y].$

The class of Quasi-Lie algebras incorporates as special cases
\emph{hom-Lie algebras} and more general \emph{quasi-hom-Lie
algebras (qhl-algebras)} which appear naturally in the algebraic
study of $\sigma$-derivations (see \cite{HLS}) and related
deformations of infinite-dimensional and finite-dimensional Lie
algebras. To get the class of qhl-algebras one specifies
$\theta=\omega$ and restricts attention to maps $\alpha$ and $\beta$
satisfying the twisting condition $[
\alpha(x),\alpha(y)]=\beta\circ\alpha [ x,y]$. Specifying this
further by taking $D_\omega =V \times V$, $\beta=id$ and
$\omega=-id$, one gets the class of Hom-Lie algebras including Lie
algebras when $\alpha = id$. The class of quasi-Lie algebras
contains also color Lie algebras and in particular Lie
superalgebras.{\ding{167}}

\section{Hom-Lie admissible algebras and $G$-Hom-associative algebras}
The Lie-admissible algebras was introduced by A. A. Albert in 1948.
Physicists attempted to introduce this structure instead of Lie
algebras. For instance, the validity of Lie-Admissible algebras for
free particles is well known. These algebras arise also in classical
quantum mechanics as a generalization of conventional mechanics (see
\cite{Albert48,MyungMonograph}).

\subsection{Hom-Lie admissible algebras}In
this section, we discuss the concept of Hom-Lie-Admissible algebra,
extending to Hom-algebra setting, the classical notion of Lie-admissible algebra.
\begin{definition}[\cite{MS}]
Let $\A=(V,\mu,\alpha )$ be a Hom-algebra structure.  Then $\A$ is
said to be Hom-Lie-admissible algebra if the bracket
defined for all $x,y \in V$ by
$$
[ x,y ]=\mu (x,y)-\mu (y,x )
$$
satisfies the Hom-Jacobi identity \eqref{HomJacobi}.
\end{definition}
\begin{remark}
Since the bracket  is also skewsymmetric then it defines a Hom-Lie
algebra.
\end{remark}
\begin{remark}
Note that any Hom-associative and  Hom-Lie algebras are
Hom-Lie-admissible.
\end{remark}

We aim now to give another characterization of Hom-Lie admissible
algebras. Let $\A =(V, \mu, \alpha)$ be a Hom-algebra. We denote by
brackets the multiplication of plus Hom-algebra $\A^+$, that is
$[x,y]= \mu(x,y)-\mu(y,x) $ and by $S$ the trilinear map
defined for $x,y,z\in V$ by
$$  S(x,y,z):= \mathfrak{as_\alpha}(x,y,z)+\mathfrak{as_\alpha}(y,z,x)+
\mathfrak{as_\alpha}(z,x,y)
$$
We have the following properties
\begin{lemma}
$$ S(x,y,z)=[\mu(x,y),\alpha(z)]+ [\mu(y,z),\alpha(x)]+[\mu(z,x),\alpha(y)]
$$

\end{lemma}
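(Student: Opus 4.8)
The plan is a straightforward expansion: both sides are trilinear expressions built only from $\mu$ and $\alpha$, so the identity should reduce to matching monomials, with no appeal to Hom-associativity, to multiplicativity of $\alpha$, or to any other structural hypothesis. First I would substitute the definition \eqref{HomAssociator} of the Hom-associator into each of the three summands of $S(x,y,z)$:
\begin{align*}
\mathfrak{as}_\alpha(x,y,z)&=\mu(\alpha(x),\mu(y,z))-\mu(\mu(x,y),\alpha(z)),\\
\mathfrak{as}_\alpha(y,z,x)&=\mu(\alpha(y),\mu(z,x))-\mu(\mu(y,z),\alpha(x)),\\
\mathfrak{as}_\alpha(z,x,y)&=\mu(\alpha(z),\mu(x,y))-\mu(\mu(z,x),\alpha(y)).
\end{align*}
Adding these, $S(x,y,z)$ becomes a sum of six terms: three of the form $\mu(\alpha(-),\mu(-,-))$ with the arguments running over the cyclic permutations of $(x,y,z)$, and three of the form $\mu(\mu(-,-),\alpha(-))$, again cyclically, with the appropriate signs.

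Next I would expand the right-hand side using the bracket $[a,b]=\mu(a,b)-\mu(b,a)$ of the plus Hom-algebra, so that
\begin{equation*}
[\mu(x,y),\alpha(z)]=\mu(\mu(x,y),\alpha(z))-\mu(\alpha(z),\mu(x,y)),
\end{equation*}
together with its two cyclic analogues. Summing the three commutators and regrouping the result produces exactly the same six monomials $\mu(\alpha(-),\mu(-,-))$ and $\mu(\mu(-,-),\alpha(-))$ that appear in the expansion of $S(x,y,z)$; comparing the two sides monomial by monomial (keeping track of the signs dictated by the bracket convention) then yields the claimed equality.

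The computation is pure bookkeeping, and I do not anticipate any real obstacle: the only point requiring a little care is that the cyclic labelling of the "inner" products $\mu(\alpha(-),\mu(-,-))$ coming from the three commutators agrees with the one coming from the three Hom-associators, which is automatic since both sides are assembled from the single pair of bracketings $\mu(\alpha(-),\mu(-,-))$ and $\mu(\mu(-,-),\alpha(-))$. The role of the lemma is purely preparatory — it rewrites the cyclic sum of Hom-associators in a form from which the characterization of Hom-Lie-admissible algebras in terms of the Hom-Jacobi identity \eqref{HomJacobi} is immediate.
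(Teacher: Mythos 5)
Your proposal is correct in approach and essentially identical to the paper's own proof, which likewise just expands the three commutators $[a,b]=\mu(a,b)-\mu(b,a)$ and matches the six monomials against the expansion of the three Hom-associators. One caveat, which affects the paper's printed proof equally: with the stated conventions $\mathfrak{as}_\alpha(x,y,z)=\mu(\alpha(x),\mu(y,z))-\mu(\mu(x,y),\alpha(z))$ and $[a,b]=\mu(a,b)-\mu(b,a)$, the bookkeeping actually yields the right-hand side equal to $-S(x,y,z)$ (every monomial appears with the opposite sign), so the identity holds only up to a global sign; this is harmless for the subsequent proposition, where only the condition $S(x,y,z)=S(x,z,y)$ versus the Hom-Jacobi identity is used, but you should not assert that the signs match as written.
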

\begin{proof}
$$[\mu(x,y),\alpha(z)]+ [\mu(y,z),\alpha(x)]+[\mu(z,x),\alpha(y)]=$$
$$\mu(\mu(x,y),\alpha(z))-\mu(\alpha(z),\mu(x,y))+\mu(\mu(y,z),\alpha(x))-\mu(\alpha(x),\mu(y,z))+$$
$$\mu(\mu(z,x),\alpha(y))-\mu(\alpha(y),\mu(z,x))=S(x,y,z)$$
\end{proof}
\begin{proposition}
A Hom-algebra $\A$ is Hom-Lie-admissible if and only if it satisfies
for any $x,y,z$ in $V$
$$ S(x,y,z)=S(x,z,y)$$

\end{proposition}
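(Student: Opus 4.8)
The plan is to express the Hom-Jacobi identity for the commutator bracket $[x,y]=\mu(x,y)-\mu(y,x)$ directly in terms of the trilinear map $S$ introduced above, and thereby reduce the admissibility condition to the symmetry relation $S(x,y,z)=S(x,z,y)$. By definition, $\A$ is Hom-Lie-admissible precisely when $\circlearrowleft_{x,y,z}[\alpha(x),[y,z]]=0$, i.e.
\[
[\alpha(x),[y,z]]+[\alpha(y),[z,x]]+[\alpha(z),[x,y]]=0
\]
for all $x,y,z\in V$. First I would expand each of the three cyclic terms using the definition of the bracket, obtaining twelve terms of the form $\pm\mu(\alpha(u),\mu(v,w))$ or $\pm\mu(\mu(v,w),\alpha(u))$.

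Next I would group these twelve terms by pairing, for each cyclic triple, a term $\mu(\alpha(u),\mu(v,w))$ together with a term $-\mu(\mu(v,w),\alpha(u))$, so as to recognize Hom-associators. Concretely, using \eqref{HomAssociator} one has $\mu(\alpha(u),\mu(v,w))-\mu(\mu(u,v),\alpha(w))=\mathfrak{as}_\alpha(u,v,w)$, and after collecting the twelve terms I expect the cyclic sum of commutators to rearrange exactly into $S(x,y,z)-S(x,z,y)$ (possibly up to an overall sign, which is harmless). Here I can invoke the Lemma proved just above, which already rewrites $S(x,y,z)=[\mu(x,y),\alpha(z)]+[\mu(y,z),\alpha(x)]+[\mu(z,x),\alpha(y)]$; the cleanest route is in fact to start from this Lemma and the analogous expression for $S(x,z,y)$, expand both, and check term-by-term that their difference equals the cyclic sum $\circlearrowleft_{x,y,z}[\alpha(x),[y,z]]$. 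That reduces the whole proposition to a finite bookkeeping identity among twelve monomials $\mu(\mu(\cdot,\cdot),\alpha(\cdot))$ and $\mu(\alpha(\cdot),\mu(\cdot,\cdot))$.

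Once the identity
\[
[\alpha(x),[y,z]]+[\alpha(y),[z,x]]+[\alpha(z),[x,y]]=S(x,y,z)-S(x,z,y)
\]
is established (for all $x,y,z$, using only bilinearity of $\mu$ and linearity of $\alpha$ — no associativity needed), the proposition follows immediately: the left-hand side vanishes identically if and only if $S(x,y,z)=S(x,z,y)$ for all $x,y,z$, which is exactly the asserted characterization of Hom-Lie-admissibility.

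The main obstacle is purely combinatorial: correctly tracking the signs and the arguments of all twelve $\mu$-monomials when expanding the three nested commutators and the two copies of $S$, and making sure every monomial on one side is matched by exactly one monomial on the other. There is no conceptual difficulty — in particular, unlike the Hom-associative case, we do not get cancellation from an associativity hypothesis; the terms must cancel (or combine) for formal reasons alone. A careful tabulation, perhaps organizing the monomials by which of $x,y,z$ sits in the $\alpha$-slot, will make the matching transparent and complete the proof.
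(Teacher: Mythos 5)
Your proposal is correct and follows essentially the same route as the paper: starting from the Lemma's expression $S(x,y,z)=[\mu(x,y),\alpha(z)]+[\mu(y,z),\alpha(x)]+[\mu(z,x),\alpha(y)]$, expanding $S(x,y,z)-S(x,z,y)$ by bilinearity, and identifying the result with $-\circlearrowleft_{x,y,z}[\alpha(x),[y,z]]$ (the overall minus sign you anticipated is indeed present and harmless).
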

\begin{proof}
$$S(x,y,z)-S(x,z,y)=$$ $$[\mu(x,y),\alpha(z)]+
[\mu(y,z),\alpha(x)]+[\mu(z,x),\alpha(y)]$$
$$-[\mu(x,z),\alpha(y)]- [\mu(z,y),\alpha(x)]-[\mu(y,x),\alpha(z)]=$$
$$- \circlearrowleft_{x,y,z}[\alpha(x),[y,z]]$$
\end{proof}
\subsection{$G$-Hom-associative algebras}
In the following, we explore some other Hom-Lie-Admissible algebras, extending to Hom-algebra setting
 the results obtained in \cite{GR04}.

\begin{definition}[\cite{MS}]
Let $G$ be a subgroup of the permutations group $\mathcal{S}_3$, a
 Hom-algebra $(V,\mu,\alpha)$ is called $G$-Hom-associative if for any
 $x,y,z\in V$, we have
\begin{equation}\label{admi}
\sum_{\sigma\in G}{(-1)^{\varepsilon ({\sigma})}\mu (\mu (x_{\sigma
(1)},x_{\sigma (2)}),\alpha (x_{\sigma (3)}))}-\mu(\alpha(x_{\sigma
(1)}),\mu (x_{\sigma (2)},x_{\sigma (3)}))=0
\end{equation}
where $x_i$ are in $V$ and $(-1)^{\varepsilon ({\sigma})}$ is the
signature of the permutation $\sigma$.
\end{definition}
The condition (\ref{admi}) may be written in terms of Hom-associator
by
\begin{equation}
\sum_{\sigma\in G}{(-1)^{\varepsilon
({\sigma})}\mathfrak{as_\alpha}\circ \sigma}=0
\end{equation}
where $\sigma$ is the extension of the permutation, still denoted by
the same notation, to a trilinear map defined by
$\sigma(x_1,x_2,x_3)=(x_{\sigma(1)},x_{\sigma(2)},x_{\sigma(3)})$.

\begin{remark}
If $\mu$ is the multiplication of a Hom-Lie-admissible Lie algebra
then the condition \eqref{admi} is equivalent to the property that
the bracket defined by
$$
[ x,y ]=\mu (x,y)-\mu (y,x )
$$
satisfies the Hom-Jacobi condition or equivalently to
\begin{equation}
\sum_{\sigma\in \mathcal{S}_3}{(-1)^{\varepsilon ({\sigma})}\mu (\mu
(x_{\sigma (1)},x_{\sigma (2)}),\alpha (x_{\sigma
(3)}))}-\mu(\alpha(x_{\sigma (1)}),\mu (x_{\sigma (2)},x_{\sigma
(3)}))=0
\end{equation}
which may be written as
\begin{equation}
\sum_{\sigma\in \mathcal{S}_3}{(-1)^{\varepsilon
({\sigma})}\mathfrak{as_\alpha}\circ \sigma}=0.
\end{equation}
\end{remark}

\begin{theorem}[\cite{MS}]
Let $G$ be a subgroup of the permutations group $\mathcal{S}_3$.
Then any $G$-Hom-associative algebra is a Hom-Lie-admissible
algebra.
\end{theorem}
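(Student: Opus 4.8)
The plan is to reduce the statement for an arbitrary subgroup $G \subseteq \mathcal{S}_3$ to the full symmetric group case by a summation-over-cosets argument. Recall that by the remark preceding the theorem, a Hom-algebra $(V,\mu,\alpha)$ is Hom-Lie-admissible if and only if
$$
\sum_{\sigma\in \mathcal{S}_3}{(-1)^{\varepsilon(\sigma)}\,\mathfrak{as_\alpha}\circ \sigma}=0,
$$
so it suffices to show that this identity is a consequence of the $G$-Hom-associativity condition $\sum_{\sigma\in G}{(-1)^{\varepsilon(\sigma)}\,\mathfrak{as_\alpha}\circ \sigma}=0$.

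First I would fix a complete set of left coset representatives $\tau_1,\dots,\tau_k$ for $G$ in $\mathcal{S}_3$, so that $\mathcal{S}_3 = \bigsqcup_{i=1}^k \tau_i G$. Then I would split the sum over $\mathcal{S}_3$ accordingly:
$$
\sum_{\sigma\in \mathcal{S}_3}{(-1)^{\varepsilon(\sigma)}\,\mathfrak{as_\alpha}\circ \sigma}
=\sum_{i=1}^k \sum_{\rho\in G}{(-1)^{\varepsilon(\tau_i\rho)}\,\mathfrak{as_\alpha}\circ(\tau_i\rho)}.
$$
Using multiplicativity of the signature, $(-1)^{\varepsilon(\tau_i\rho)}=(-1)^{\varepsilon(\tau_i)}(-1)^{\varepsilon(\rho)}$, and the fact that the extension of permutations to trilinear maps satisfies $\mathfrak{as_\alpha}\circ(\tau_i\rho) = (\mathfrak{as_\alpha}\circ \tau_i)\circ \rho$ when we compose the permutation actions (one must be a little careful about the order of composition here; with the convention $\sigma(x_1,x_2,x_3)=(x_{\sigma(1)},x_{\sigma(2)},x_{\sigma(3)})$ the action is a right action, so I would instead use right coset representatives and write $\mathcal{S}_3 = \bigsqcup_i G\tau_i$, giving $\mathfrak{as_\alpha}\circ(\rho\tau_i)=(\mathfrak{as_\alpha}\circ\rho)\circ\tau_i$). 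Each inner sum then becomes
$$
(-1)^{\varepsilon(\tau_i)}\Bigl(\sum_{\rho\in G}{(-1)^{\varepsilon(\rho)}\,\mathfrak{as_\alpha}\circ \rho}\Bigr)\circ \tau_i,
$$
and the bracketed expression is exactly zero by hypothesis. Hence the whole $\mathcal{S}_3$-sum vanishes, and the Hom-algebra is Hom-Lie-admissible.

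The main obstacle — really the only subtle point — is bookkeeping the direction of the permutation action: whether $\sigma$ acting on arguments composes as a left or right action, which dictates whether one should decompose $\mathcal{S}_3$ into left or right cosets of $G$ so that the fixed representative $\tau_i$ factors out cleanly on the correct side. Once the convention is pinned down, the argument is a one-line coset decomposition plus multiplicativity of the signature. I would present it exactly in that order: recall the $\mathcal{S}_3$-criterion for Hom-Lie-admissibility, choose coset representatives, split the sum, factor out the representative and the signature, and invoke the $G$-Hom-associativity hypothesis to kill each block.
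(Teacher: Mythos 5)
Your proposal is correct and is essentially the paper's own argument: the paper likewise decomposes $\mathcal{S}_3$ into cosets $\tau_i G$ and observes that each block of the alternating sum is $(-1)^{\varepsilon(\tau_i)}$ times the vanishing $G$-sum precomposed with the argument-permutation $\tau_i$. One small caution on the point you flagged: since $\widehat{\sigma}\circ\widehat{\tau}=\widehat{\tau\sigma}$ for the extension $\widehat{\sigma}(x_1,x_2,x_3)=(x_{\sigma(1)},x_{\sigma(2)},x_{\sigma(3)})$ under the standard convention $(\sigma\tau)(i)=\sigma(\tau(i))$, the blocks that factor as $(\sum_{\rho\in G}(-1)^{\varepsilon(\rho)}\mathfrak{as}_\alpha\circ\widehat{\rho})\circ\widehat{\tau_i}$ are the \emph{left} cosets $\tau_i G$ (as in the paper), so your switch to right cosets is only consistent if you also switch the composition convention; this matters because the order-two subgroups of $\mathcal{S}_3$ are not normal.
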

\begin{proof}The skewsymmetry follows straightaway from the
definition.

We have a subgroup $G$ in $\mathcal{S}_3$. Take the set of conjugacy
class $\{g G\}_{g\in I}$  where $I\subseteq G$, and for any
$\sigma_1, \sigma_2\in I,\sigma_1 \neq \sigma_2 \Rightarrow \sigma_1
G\bigcap \sigma_1 G =\emptyset$. Then

$$\sum_{\sigma\in \mathcal{S}_3}{(-1)^{\varepsilon ({\sigma})}
\mathfrak{as_\alpha}\circ \sigma}=\sum_{\sigma_1\in
I}{\sum_{\sigma_2\in \sigma_1 G}{(-1)^{\varepsilon
({\sigma_2})}\mathfrak{as_\alpha}\circ \sigma_2}}=0.$$
\end{proof}

 The result says that for any
 subgroup of $\mathcal{S}_3$ corresponds a class of $G$-Hom-associative
  algebra.  Since the subgroups are
$G_1=\{Id\}, ~G_2=\{Id,\sigma_{1 2}\},~G_3=\{Id,\sigma_{2
3}\},$$ $$~G_4=\{Id,\sigma_{1 3}\},~G_5=A_3 ,~G_6=\mathcal{S}_3,$
where $A_3$ is the alternating group and where $\sigma_{ij}$ is the
transposition between $i$ and $j$, then we obtain the following type
 of Hom-Lie-admissible algebras.
\begin{itemize}
\item The  $G_1$-Hom-associative algebras  are the Hom-associative
algebras defined above.

\item The  $G_2$-Hom-associative algebras satisfy the condition
\begin{equation}\nonumber
\mu(\alpha(x),\mu (y,z))-\mu(\alpha(y),\mu (x,z))=\mu (\mu
(x,y),\alpha (z))-\mu (\mu (y,x),\alpha (z))
\end{equation}
When $\alpha$ is the identity the algebra is called Vinberg algebra
or left symmetric algebra.
\item The  $G_3$-Hom-associative algebras satisfy the condition
\begin{equation}\nonumber
\mu(\alpha(x),\mu (y,z))-\mu(\alpha(x),\mu (z,y))=\mu (\mu
(x,y),\alpha (z))-\mu (\mu (x,z),\alpha (y))
\end{equation}
When $\alpha$ is the identity the algebra is called pre-Lie algebra
or right symmetric algebra.
\item The  $G_4$-Hom-associative algebras satisfy the condition
\begin{equation}\nonumber
\mu(\alpha(x),\mu (y,z))-\mu(\alpha(z),\mu (y,x))=\mu (\mu
(x,y),\alpha (z))-\mu (\mu (z,y),\alpha (x))
\end{equation}
\item The  $G_5$-Hom-associative algebras satisfy the condition
\begin{eqnarray}\nonumber
\nonumber \mu(\alpha(x),\mu (y,z))+\mu(\alpha(y),\mu
(z,x)+\mu(\alpha(z),\mu
(x,y))= \\
\nonumber \mu (\mu (x,y),\alpha (z))+\mu (\mu (y,z),\alpha (x))+\mu
(\mu (z,x),\alpha (y))
\end{eqnarray}\nonumber
If the product $\mu$ is skewsymmetric then the previous condition is
exactly the Hom-Jacobi identity.
\item The  $G_6$-Hom-associative algebras are the Hom-Lie-admissible
algebras.
\end{itemize}
Special cases of $G$-Hom-associative algebras include generalization of Vinberg and
pre-Lie algebras.
\begin{definition}[\cite{MS}]

A Hom-Vinberg algebra (Hom-left-symmetric algebra) is a triple $(V, \mu, \alpha)$ consisting of a
linear space $V$, a bilinear map $\mu: V\times V \rightarrow V$ and
a homomorphism $\alpha$ satisfying
\begin{equation}
\mu(\alpha(x),\mu (y,z))-\mu(\alpha(y),\mu (x,z))=\mu (\mu
(x,y),\alpha (z))-\mu (\mu (y,x),\alpha (z))
\end{equation}
\end{definition}

\begin{definition}[\cite{MS}]
A Hom-pre-Lie  algebra (Hom-right-symmetric algebra) is a triple $(V, \mu, \alpha)$ consisting of
a linear space $V$, a bilinear map $\mu: V\times V \rightarrow V$
and a homomorphism $\alpha$ satisfying
\begin{equation}
\mu(\alpha(x),\mu (y,z))-\mu(\alpha(x),\mu (z,y))=\mu (\mu
(x,y),\alpha (z))-\mu (\mu (x,z),\alpha (y))
\end{equation}
\end{definition}

\begin{remark}
A Hom-pre-Lie algebra is the opposite algebra of a Hom-Vinberg
algebra.
\end{remark}
\begin{remark}
The multiplicative Hom-Novikov algebras which are multiplicative Hom-Vinberg algebra with the additional identity
$\mu(\mu(x,y),\alpha(z))=\mu(\mu(x,z),\alpha(y))$,
were studied in \cite{Yau:HomNovikov}.
\end{remark}

The following theorem states that $G$-associative algebras deform into $G$-Hom-associative
algebras along any algebra endomorphism. Therefore, it provides a construction procedure.
\begin{theorem}[\cite{Yau:homology}]\label{thmConstrGHomAss}
Let $(V,\mu)$ be a $G$-associative algebra  and  $\alpha :
V\rightarrow V$ be an
 algebra endomorphism. Then $(V,\mu_\alpha,\alpha)$,
where $\mu_\alpha=\alpha\circ\mu$,  is a $G$-Hom-associative
algebra.

Moreover, suppose that  $(V',\mu')$ is another $G$-associative
algebra   and $\alpha ' : V'\rightarrow V'$ is an algebra
endomorphism. If $f:V\rightarrow V'$ is an algebras morphism that
satisfies $f\circ\alpha=\alpha'\circ f$ then
$$f:(V,\mu_\alpha,\alpha)\longrightarrow (V',\mu'_{\alpha '},\alpha ')
$$
is a morphism of $G$-Hom-associative algebras.
\end{theorem}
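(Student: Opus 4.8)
The plan is to compute the Hom-associator $\mathfrak{as}_\alpha$ of the twisted triple $(V,\mu_\alpha,\alpha)$ and show it is just $\alpha^{2}$ applied to the ordinary associator of $(V,\mu)$; once that identity is in hand, $G$-Hom-associativity follows from $G$-associativity by pulling $\alpha^{2}$ out of the alternating sum over $G$. First I would record the two consequences of $\alpha$ being an algebra endomorphism that drive everything: $\mu_\alpha(a,b)=\alpha(\mu(a,b))$ by definition, and $\mu(\alpha(a),\alpha(b))=\alpha(\mu(a,b))$ by multiplicativity. Using each of these twice,
\[
\mu_\alpha(\alpha(x),\mu_\alpha(y,z))=\alpha\bigl(\mu(\alpha(x),\alpha(\mu(y,z)))\bigr)=\alpha^{2}\bigl(\mu(x,\mu(y,z))\bigr),
\]
\[
\mu_\alpha(\mu_\alpha(x,y),\alpha(z))=\alpha\bigl(\mu(\alpha(\mu(x,y)),\alpha(z))\bigr)=\alpha^{2}\bigl(\mu(\mu(x,y),z)\bigr).
\]
Subtracting gives $\mathfrak{as}_\alpha(x,y,z)=\alpha^{2}\bigl(\mathfrak{as}(x,y,z)\bigr)$, where $\mathfrak{as}$ denotes the ordinary associator of $(V,\mu)$, i.e.\ the $\alpha=\id$ case of \eqref{HomAssociator}.

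Next, since a permutation $\sigma\in\mathcal{S}_3$ only rearranges the three arguments while $\alpha^{2}$ acts on the output, we have $\mathfrak{as}_\alpha\circ\sigma=\alpha^{2}\circ(\mathfrak{as}\circ\sigma)$ for every $\sigma$, and therefore
\[
\sum_{\sigma\in G}(-1)^{\varepsilon(\sigma)}\,\mathfrak{as}_\alpha\circ\sigma=\alpha^{2}\circ\Bigl(\sum_{\sigma\in G}(-1)^{\varepsilon(\sigma)}\,\mathfrak{as}\circ\sigma\Bigr).
\]
The inner sum is zero because $(V,\mu)$ is $G$-associative, so the left-hand side vanishes; by the Hom-associator reformulation of \eqref{admi} this is exactly the assertion that $(V,\mu_\alpha,\alpha)$ is $G$-Hom-associative.

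For the second statement I would simply verify the two defining conditions for a morphism of Hom-algebras. The condition $f\circ\alpha=\alpha'\circ f$ is assumed. For compatibility with the multiplications, using that $f$ is an algebra morphism together with the intertwining hypothesis,
\[
f\circ\mu_\alpha=f\circ\alpha\circ\mu=\alpha'\circ f\circ\mu=\alpha'\circ\mu'\circ(f\times f)=\mu'_{\alpha'}\circ(f\times f),
\]
so $f$ is a morphism of $G$-Hom-associative algebras. I do not anticipate any genuine obstacle here; the construction is the associative-type analogue of Theorem~\ref{thm:SALmorphism}, and the only point requiring care is the bookkeeping of the endomorphism property $\mu(\alpha(a),\alpha(b))=\alpha(\mu(a,b))$, which is precisely what allows the two copies of $\alpha$ produced by $\mu_\alpha$ to collapse to a single outer $\alpha^{2}$ and thus commute past the $G$-sum.
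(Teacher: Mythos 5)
Your proof is correct, and it is exactly the standard argument: the paper itself gives no proof of this theorem (it is cited from \cite{Yau:homology}), but the key identity $\mathfrak{as}_\alpha(x,y,z)=\alpha^{2}\bigl(\mathfrak{as}(x,y,z)\bigr)$, obtained by alternating the definition $\mu_\alpha=\alpha\circ\mu$ with the endomorphism property $\mu(\alpha(a),\alpha(b))=\alpha(\mu(a,b))$, is precisely the computation underlying this family of results (compare the analogous Theorems \ref{thm:SALmorphism} and \ref{thmConstrHomAlt}). The reduction of the $G$-sum via linearity of $\alpha^{2}$ and the verification of the morphism condition are both complete and accurate.
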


\subsection{Hom-Lie-Admissible Superalgebras}   We discuss in this
section the concept of Hom-Lie-Admissible superalgebras studied in
\cite{AmmarMakhlouf2009}. This study borders also an extension to
graded case of the Lie-admissible algebras discussed in \cite{GR04}.

Let $\A =(V, \mu, \alpha)$ be a Hom-superalgebra, that is a
superspace $V$ with an even bilinear map $\mu$ and an even  linear
map $\alpha$ satisfying eventually identities. Let $[x,y]=
\mu(x,y)-(-1)^{|x||y|}\mu(y,x) $, for all homogeneous element $x,y
\in V$, be the associated supercommutator. The bracket is extended to all elements by
linearity.
\begin{definition}[\cite{AmmarMakhlouf2009}] \label{def:Lieadmis}
Let $\A=(V, \mu, \alpha)$ be a Hom-superalgebra  on $V$ defined by
an even multiplication $\mu$ and an even homomorphism $\alpha$. Then
$\A$ is said to be Hom-Lie admissible superalgebra if the
bracket defined for all homogeneous element $x,y \in V$ by
\begin{equation} \label{commutator}
[ x,y ]=\mu (x,y)-(-1)^{|x||y|}\mu (y,x )
\end{equation}
 satisfies the
Hom-superJacobi identity (\ref{JacobyHomsuperLie}).
\end{definition}

\begin{remark}
Since the supercommutator bracket \eqref{commutator} is always
supersymmetric, this makes any Hom-Lie admissible superalgebra into
a Hom-Lie superalgebra.
\end{remark}
\begin{remark}
 As mentioned  in in the proposition (\ref{Supercommutator}), any
 associative superalgebra is a Hom-Lie admissible superalgebra.
\end{remark}
\begin{lemma}\label{JacobiSuperCommutator}
Let $\A=(V, \mu, \alpha)$ be a Hom-superalgebra and $[ \cdot,\cdot
]$ be the associated supercommutator then
\begin{eqnarray}\label{LieAdmiIdentity}
&&\circlearrowleft_{x,y,z}{(-1)^{|x||z|}[\alpha (x),[y,z]]}=\\
&&\quad (-1)^{|x||z|} \mathfrak{as}_{\alpha}(x,y,z) +
(-1)^{|y||x|}\mathfrak{as}_{\alpha}(y,z,x)\nonumber\\
&&\quad +(-1)^{|z||y|} \mathfrak{as}_{\alpha}(z,x,y)
-(-1)^{|x||z|+|y||z|}\mathfrak{as}_{\alpha}(x,z,y)\nonumber\\
&&\quad -(-1)^{|x||y|+|y||z|}
\mathfrak{as}_{\alpha}(z,y,x)-(-1)^{|x||y|+|x||z|}
\mathfrak{as}_{\alpha}(y,x,z)\nonumber
\end{eqnarray}

\end{lemma}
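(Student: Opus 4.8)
The plan is to prove identity \eqref{LieAdmiIdentity} by direct expansion of both sides using the definition \eqref{commutator} of the supercommutator and the definition \eqref{HomAssociator} of the Hom-associator, then collecting terms. First I would expand the left-hand side term by term. For the first summand, $(-1)^{|x||z|}[\alpha(x),[y,z]]$, I write $[y,z]=\mu(y,z)-(-1)^{|y||z|}\mu(z,y)$, and then
$$
[\alpha(x),[y,z]]=\mu(\alpha(x),\mu(y,z))-(-1)^{|y||z|}\mu(\alpha(x),\mu(z,y))-(-1)^{|x|(|y|+|z|)}\bigl(\mu(\mu(y,z),\alpha(x))-(-1)^{|y||z|}\mu(\mu(z,y),\alpha(x))\bigr),
$$
using $|[y,z]|=|y|+|z|$ for the sign coming from supercommuting $\alpha(x)$ past $[y,z]$. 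Multiplying through by $(-1)^{|x||z|}$ gives four $\mu$-terms. I would do the same for the other two cyclic summands, obtained by the substitution $x\to y\to z\to x$ on indices and signs, producing twelve $\mu$-terms in total on the left.

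Next I would expand the right-hand side. Each of the six Hom-associators $\mathfrak{as}_\alpha(\cdot,\cdot,\cdot)$ contributes two $\mu$-terms via \eqref{HomAssociator}, so the right side is also a sum of twelve $\mu$-terms with explicit sign prefactors. The key step is then a bookkeeping matching: I would show that the twelve terms on the left coincide exactly, sign by sign, with the twelve terms on the right. For instance the term $(-1)^{|x||z|}\mu(\alpha(x),\mu(y,z))$ on the left matches the $\mu(\alpha(x),\mu(y,z))$ piece of $(-1)^{|x||z|}\mathfrak{as}_\alpha(x,y,z)$; the term $-(-1)^{|x||z|+|y||z|}\mu(\alpha(x),\mu(z,y))$ matches the corresponding piece of $-(-1)^{|x||z|+|y||z|}\mathfrak{as}_\alpha(x,z,y)$; and the term $-(-1)^{|x||z|+|x||y|+|x||z|}\mu(\mu(y,z),\alpha(x))=-(-1)^{|x||y|}\mu(\mu(y,z),\alpha(x))$ (using $(-1)^{2|x||z|}=1$) matches the $-\mu(\mu(\cdot),\alpha(\cdot))$ piece of $(-1)^{|y||x|}\mathfrak{as}_\alpha(y,z,x)$. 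The remaining nine terms are handled by applying the cyclic relabeling.

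The main obstacle is purely the sign arithmetic: one must be careful that the sign picked up when commuting $\alpha(x)$ past the bracket $[y,z]$ is governed by the total parity $|y|+|z|$, that $|\mu(y,z)|=|y|+|z|$ and $|\alpha(x)|=|x|$ since $\alpha$ and $\mu$ are even, and that squared parity factors such as $(-1)^{2|x||z|}$ collapse to $1$ over a field of characteristic $0$ (in fact over $\mathbb{Z}$). I would organize the verification by listing, for each ordered triple of arguments appearing inside a $\mu(\alpha(\cdot),\mu(\cdot,\cdot))$ or $\mu(\mu(\cdot,\cdot),\alpha(\cdot))$, its coefficient as computed from the left side and as computed from the right side, and observe they agree; since both sides are $\mathbb{K}$-multilinear and the identity has been checked on homogeneous elements, it extends to all elements by linearity. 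No deep idea is needed beyond this careful expansion, so I expect the proof to be short, essentially exhibiting the twelve-term expansion of the left side and noting it is term-by-term the stated right side.
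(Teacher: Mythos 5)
Your proposal is correct and is exactly the argument the paper intends: the paper's proof of this lemma reads ``By straightforward calculation,'' i.e.\ the same direct expansion of both sides into twelve $\mu$-terms (using $|\alpha(x)|=|x|$, $|\mu(y,z)|=|y|+|z|$, and $(-1)^{2|x||z|}=1$) followed by term-by-term matching. Your sample matchings check out, so there is nothing to add.
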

\begin{proof}
By straightforward calculation.

\end{proof}
\begin{remark}
If $\alpha =\id$, then we obtain a formula expressing the left hand
side of the classical superJacobi identity in terms of classical
associator.
\end{remark}
 In the following we aim to characterize the Hom-Lie admissible superalgebras
 in terms of Hom-associator.
 We introduce a trilinear map $\widetilde{S}$ defined for homogeneous elements
  $x,y,z\in V$ by
$$\widetilde{S}(x,y,z):= (-1)^{|x||z|}\mathfrak{as}_{\alpha}(x,y,z)+
(-1)^{|y||x|}\mathfrak{as}_{\alpha}(y,z,x)+(-1)^{|z||y|}
\mathfrak{as}_{\alpha}(z,x,y).
$$
Then, we have the following properties.

\begin{proposition}[\cite{AmmarMakhlouf2009}]
Let  $\A =(V, \mu, \alpha)$ be a Hom-superalgebra, then $\A$ is a
Hom-Lie admissible superalgebra if and only if it satisfies
\begin{equation}\label{Srel} \widetilde{S}(x,y,z)=
(-1)^{|x||y|+|x||z|+|y||z|}\widetilde{S}(x,z,y)
\end{equation}
for any homogenous elements  $x,y,z \in V.$
\end{proposition}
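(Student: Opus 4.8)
The plan is to reduce the characterization to a bilinearity/linearization argument analogous to the non-graded case (Lemma \ref{flexi23} and the Proposition on Hom-flexible algebras), using the identity of Lemma \ref{JacobiSuperCommutator} as the bridge. First I would observe that, by Definition \ref{def:Lieadmis}, $\A$ is Hom-Lie admissible superalgebra if and only if the left-hand side of \eqref{LieAdmiIdentity} vanishes for all homogeneous $x,y,z$. By Lemma \ref{JacobiSuperCommutator}, that left-hand side equals the alternating-looking sum of six Hom-associator terms; grouping the six terms into the three ``positive'' ones and the three ``negative'' ones, the vanishing condition reads
\begin{equation*}
\widetilde{S}(x,y,z) = (-1)^{|x||z|+|y||z|}\,\mathfrak{as}_\alpha(x,z,y) + (-1)^{|x||y|+|y||z|}\,\mathfrak{as}_\alpha(z,y,x) + (-1)^{|x||y|+|x||z|}\,\mathfrak{as}_\alpha(y,x,z),
\end{equation*}
where $\widetilde S$ is exactly the trilinear map defined just before the Proposition. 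So the whole statement amounts to showing that the right-hand side above is precisely $(-1)^{|x||y|+|x||z|+|y||z|}\widetilde{S}(x,z,y)$.

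The key step is therefore to recognize the right-hand side as a permuted copy of $\widetilde S$ itself. Write out $\widetilde S(x,z,y)$ by substituting $(y,z)\mapsto(z,y)$ in the definition of $\widetilde S$: this gives $(-1)^{|x||y|}\mathfrak{as}_\alpha(x,z,y) + (-1)^{|z||x|}\mathfrak{as}_\alpha(z,y,x) + (-1)^{|y||z|}\mathfrak{as}_\alpha(y,x,z)$. Multiplying this by the global sign $(-1)^{|x||y|+|x||z|+|y||z|}$ and simplifying the exponents term by term (e.g. $|x||y|+|x||z|+|y||z|+|x||y| \equiv |x||z|+|y||z| \pmod 2$, and similarly for the other two), one recovers exactly the three summands on the right-hand side displayed above. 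This is the main (and essentially only) obstacle: it is a careful bookkeeping of $\Z_2$-exponents, but there is no conceptual difficulty — each of the three terms matches after cancelling a repeated exponent mod $2$.

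Having established the equivalence ``LHS of \eqref{LieAdmiIdentity}$=0$'' $\iff$ ``$\widetilde S(x,y,z) = (-1)^{|x||y|+|x||z|+|y||z|}\widetilde S(x,z,y)$'', the proof is complete for homogeneous arguments; the general case follows by bilinearity since both sides of \eqref{Srel} are bilinear (trilinear) in their arguments and the bracket and associator were extended to all elements by linearity. I would also remark that setting $\alpha=\id$ recovers the corresponding classical statement for Lie-admissible superalgebras, and that specializing to trivial odd part recovers the non-graded Proposition $S(x,y,z)=S(x,z,y)$ proved earlier via the same mechanism. The only place to be cautious is the sign arithmetic; everything else is a direct consequence of Lemma \ref{JacobiSuperCommutator} and the definition of $\widetilde S$.
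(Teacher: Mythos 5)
Your proposal is correct and follows essentially the same route as the paper: both proofs hinge on Lemma \ref{JacobiSuperCommutator} to rewrite the graded Hom-Jacobi expression as the six signed Hom-associator terms, and then identify the three ``negative'' terms as $(-1)^{|x||y|+|x||z|+|y||z|}\widetilde{S}(x,z,y)$ by the same mod-$2$ exponent bookkeeping. The only cosmetic difference is the direction of the computation (the paper starts from the difference $\widetilde{S}(x,y,z)-(-1)^{|x||y|+|x||z|+|y||z|}\widetilde{S}(x,z,y)$ and shows it equals the cyclic Jacobi sum), and your sign checks are accurate.
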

\begin{proof} We have
\begin{eqnarray*}
&& \widetilde{S}(x,y,z)-(-1)^{|x||y|+|x||z|+|y||z|}S(x,z,y)
\\&&=(-1)^{|x||z|}
\mathfrak{as}_{\alpha}(x,y,z)+
(-1)^{|y||x|}\mathfrak{as}_{\alpha}(y,z,x)+(-1)^{|z||y|}
\mathfrak{as}_{\alpha}(z,x,y)\\
&& \quad
-(-1)^{|x||y|+|x||z|+|y||z|}((-1)^{|x||y|}\mathfrak{as}_{\alpha}(x,z,y)+
(-1)^{|z||x|}\mathfrak{as}_{\alpha}(z,y,x)\\
&&\quad+(-1)^{|y||z|} \mathfrak{as}_{\alpha}(y,x,z))
\\
&&=(-1)^{|x||z|} \mathfrak{as}_{\alpha}(x,y,z)+
(-1)^{|y||x|}\mathfrak{as}_{\alpha}(y,z,x)+(-1)^{|z||y|}
\mathfrak{as}_{\alpha}(z,x,y)\\
&&\quad
-(-1)^{|x||z|+|y||z|}\mathfrak{as}_{\alpha}(x,z,y)-(-1)^{|x||y|+|y||z|}
\mathfrak{as}_{\alpha}(z,y,x)\\
&&\quad-(-1)^{|x||y|+|x||z|}
\mathfrak{as}_{\alpha}(y,x,z)\\
 &&= \circlearrowleft_{x,y,z}{(-1)^{|x||z|}[x,[y,z]]} \quad
 \text{(lemma \ref{JacobiSuperCommutator})}
\end{eqnarray*}
Then the Hom-superJacobi identity (\ref{JacobyHomsuperLie}) is
satisfied if and only if the condition (\ref{Srel}) holds.
\end{proof}

\subsection{$G$-Hom-associative superalgebras}
A classification of Hom-Lie admissible superalgebras
 using the symmetric group $\mathcal{S}_3$ is provided in \cite{AmmarMakhlouf2009}.
 We extended to $\mathbb{Z}_2$-graded case the
  notion of $G$-Hom-associative algebras and in particular  $G$-associative algebras
 which was introduced in the classical ungraded Lie case in (\cite{GR04}) and
  developed for the Hom-Lie case in (\cite{MS}).

Let $\mathcal{S}_3$ be the permutation group generated by the transpositions $\sigma_1
,\sigma_2.$ We extend a permutation $\tau \in \mathcal{S}_3$ to a
map $\tau : V^{\times 3} \rightarrow V^{\times 3}$ defined for
$x_1,x_2,x_3\in V$ by
$\tau(x_1,x_2,x_3)=(x_{\tau(1)},x_{\tau(2)},x_{\tau(3)}).$ We keep
for simplicity
 the same notation. In particular,
 $\sigma_1(x_1,x_2,x_3)=(x_{2},x_{1},x_{3})$ and
 $\sigma_2(x_1,x_2,x_3)=(x_{1},x_{3},x_{2}).$

 We introduce a notion of a parity of transposition $\sigma_i$ where $i\in \{1,2\}$, by setting
   $$|\sigma_i(x_1,x_2,x_3)|=|x_i||x_{i+1}|.$$
 We assume that the parity of the identity is $0$ and for the composition
 $\sigma_i\sigma_j$, it is defined by
\begin{eqnarray*}|\sigma_i\sigma_j(x_1,x_2,x_3)|&=&
 |\sigma_j(x_1,x_2,x_3)|+|\sigma_i(\sigma_j(x_1,x_2,x_3))|\\
 \ &=& |\sigma_j(x_1,x_2,x_3)|+
 |\sigma_i(x_{\sigma _j(1)},x_{\sigma_ j(2)},x_{\sigma_j(3)})|
 \end{eqnarray*}
We define by induction the parity for any composition.

For the elements $\id,\sigma_1 ,\sigma_2,\sigma_1 \sigma_2,\sigma_2
\sigma_1, \sigma_2 \sigma_1 \sigma_2$ of $\mathcal{S}_3$, we obtain
\begin{eqnarray*}
\mid \id (x_1,x_2,x_3)\mid&=&0,\\
\mid\sigma_1(x_1,x_2,x_3)\mid &=&|x_1||x_{2}|
 ,\\
 \mid\sigma_2(x_1,x_2,x_3)\mid &=&|x_2||x_{3}|,\\
 \mid \sigma_1 \sigma_2(x_1,x_2,x_3)\mid &=&|x_2||x_{3}|+|x_1||x_{3}|
 ,\\
  \mid \sigma_2 \sigma_1(x_1,x_2,x_3)\mid &=&|x_1||x_{2}|+|x_1||x_{3}|,\\
\mid \sigma_2 \sigma_1 \sigma_2(x_1,x_2,x_3)\mid
&=&|x_2||x_{3}|+|x_1||x_{3}|+ |x_1||x_{2}|.
\end{eqnarray*}

Now, we  express the condition of Hom-Lie admissibility of a
Hom-superalgebra using  permutations.

\begin{lemma}[\cite{AmmarMakhlouf2009}]\label{JacobiSuperCommutator2}
A Hom-superalgebra $\A=(V, \mu, \alpha)$ is a Hom-Lie admissible
superalgebra if the following condition holds
\begin{equation}\label{LieAdmiIdentity2}
\sum_{\tau\in \mathcal{S}_3} {(-1)^{\varepsilon ({\tau})}
(-1)^{|\tau(x_1,x_2,x_3)|}\mathfrak{as}_{\alpha}\circ
\tau(x_1,x_2,x_3)} =0
\end{equation}
where $x_i$ are in $V$, $(-1)^{\varepsilon ({\tau})}$ is the
signature of the permutation $\tau$ and $|\tau(x_1,x_2,x_3)|$ is the
parity of $\tau$.
\end{lemma}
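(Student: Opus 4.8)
The plan is to show that the left-hand side of~\eqref{LieAdmiIdentity2} is, up to sign conventions, exactly the cyclic sum $\circlearrowleft_{x,y,z}(-1)^{|x||z|}[\alpha(x),[y,z]]$ appearing in the Hom-superJacobi identity~\eqref{JacobyHomsuperLie}; once this identification is made, the statement follows immediately from the definition of Hom-Lie admissible superalgebra. Concretely, I would expand the supercommutator bracket $[x,y]=\mu(x,y)-(-1)^{|x||y|}\mu(y,x)$ twice inside the triple bracket $[\alpha(x),[y,z]]$, obtaining four $\mu$-terms, each built from a product of two $\mu$'s with $\alpha$ applied to one slot; collecting these terms (and their cyclic rotates) and comparing with the definition of the Hom-associator $\mathfrak{as}_\alpha$ in~\eqref{HomAssociator} should reproduce the six-term right-hand side already recorded in Lemma~\ref{JacobiSuperCommutator}, equation~\eqref{LieAdmiIdentity}.

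The key intermediate step is therefore Lemma~\ref{JacobiSuperCommutator}, which I would invoke directly: it states that
$$
\circlearrowleft_{x,y,z}(-1)^{|x||z|}[\alpha(x),[y,z]]
= (-1)^{|x||z|}\mathfrak{as}_\alpha(x,y,z)+(-1)^{|y||x|}\mathfrak{as}_\alpha(y,z,x)+(-1)^{|z||y|}\mathfrak{as}_\alpha(z,x,y)
$$
$$
\quad -(-1)^{|x||z|+|y||z|}\mathfrak{as}_\alpha(x,z,y)-(-1)^{|x||y|+|y||z|}\mathfrak{as}_\alpha(z,y,x)-(-1)^{|x||y|+|x||z|}\mathfrak{as}_\alpha(y,x,z).
$$
The remaining task is purely bookkeeping: I must check that each of the six terms on the right is precisely a summand $(-1)^{\varepsilon(\tau)}(-1)^{|\tau(x_1,x_2,x_3)|}\mathfrak{as}_\alpha\circ\tau(x_1,x_2,x_3)$ for the corresponding $\tau\in\{\id,\sigma_1\sigma_2,\sigma_2\sigma_1,\sigma_2,\sigma_2\sigma_1\sigma_2,\sigma_1\}$. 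Here I would use the explicit parity table computed just above the lemma: $|\id|=0$, $|\sigma_1\sigma_2(x_1,x_2,x_3)|=|x_2||x_3|+|x_1||x_3|$, $|\sigma_2\sigma_1(x_1,x_2,x_3)|=|x_1||x_2|+|x_1||x_3|$, and so on. For instance the term $(-1)^{|y||x|}\mathfrak{as}_\alpha(y,z,x)$ corresponds to the cyclic permutation $(x,y,z)\mapsto(y,z,x)=\sigma_1\sigma_2(x,y,z)$, which is even ($\varepsilon=0$) and has parity $|y||x|$ after relabelling — matching the recorded sign — while $-(-1)^{|x||z|+|y||z|}\mathfrak{as}_\alpha(x,z,y)$ corresponds to the odd transposition $\sigma_2$ with parity $|x_2||x_3|$ evaluated at $(x,z,y)$ giving $|z||y|$, plus the extra twist from the cyclic reindexing; a careful matching of all six shows the two expressions coincide.

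The main obstacle, and the only genuinely delicate point, is getting the sign cocycle right: the Koszul signs $(-1)^{|x||z|}$ etc.\ multiplying each term in the cyclic sum must be reconciled with the "transposition parity" weights $(-1)^{|\tau(x_1,x_2,x_3)|}$, and these are defined with respect to a fixed reference ordering $(x_1,x_2,x_3)$ rather than the cyclically rotated one. I would handle this by working throughout with the fixed letters $x_1,x_2,x_3$ (never relabelling), writing out $\circlearrowleft$ as the three explicit summands over $\id$, the $3$-cycle, and its square, and then verifying term-by-term that the combined sign on each $\mathfrak{as}_\alpha\circ\tau$ equals $(-1)^{\varepsilon(\tau)}(-1)^{|\tau(x_1,x_2,x_3)|}$. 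Since there are exactly six terms and the parity of each of the six elements of $\mathcal{S}_3$ has been tabulated, this is a finite check; modulo the sign conventions being internally consistent (which is where I would double-check the statement of Lemma~\ref{JacobiSuperCommutator} against~\eqref{JacobyHomsuperLie}), the proof reduces to this verification and the observation that~\eqref{LieAdmiIdentity2} asserts exactly that the cyclic sum vanishes, which by definition is the Hom-superJacobi identity for the supercommutator.
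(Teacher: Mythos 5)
Your proposal is correct and follows essentially the same route as the paper: the paper's own proof consists precisely of the ``straightforward calculation'' identifying $\circlearrowleft_{x_1,x_2,x_3}(-1)^{|x_1||x_3|}[\alpha(x_1),[x_2,x_3]]$ with $(-1)^{|x_1||x_3|}$ times the $\mathcal{S}_3$-sum, which is exactly Lemma \ref{JacobiSuperCommutator} combined with the six-term sign bookkeeping you describe (and the signs do reconcile, the overall factor $(-1)^{|x_1||x_3|}$ being harmless for the vanishing statement). The only quibble is in your illustrative matching: with the paper's convention that $\sigma_i\sigma_j$ acts by $\sigma_j$ first, the triple $(y,z,x)=(x_2,x_3,x_1)$ is $\sigma_2\sigma_1(x_1,x_2,x_3)$ rather than $\sigma_1\sigma_2(x_1,x_2,x_3)$, though the sign you record for that term is the correct one.
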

\begin{proof}
By straightforward calculation, the associated supercommutator
bracket satisfies
\begin{eqnarray*}
&& \circlearrowleft_{x_1,x_2,x_3}(-1)^{|x_1||x_3|}[\alpha
(x_1),[x_2,x_3]] =\\ && (-1)^{|x_1||x_3|}\sum_{\tau\in
\mathcal{S}_3} {(-1)^{\varepsilon ({\tau})}
(-1)^{|\tau(x_1,x_2,x_3)|}\mathfrak{as}_{\alpha}\circ
\tau(x_1,x_2,x_3)}.
\end{eqnarray*}
\end{proof}
It turns out that, for the associated supercommutator   of a
Hom-superalgebra,
  the Hom-superJacobi identity (\ref{JacobyHomsuperLie}) is equivalent to
\begin{equation*}
\sum_{\tau\in \mathcal{S}_3} {(-1)^{\varepsilon ({\tau})}
(-1)^{|\tau(x_1,x_2,x_3)|}\mathfrak{as}_{\alpha}\circ
\tau(x_1,x_2,x_3)}=0.
\end{equation*}
We introduce now the notion of $G$-Hom-associative superalgebra, where
$G$ is
 a subgroup of the permutations group $\mathcal{S}_3$.
\begin{definition}[\cite{AmmarMakhlouf2009}]
Let $G$ be a subgroup of the permutations group $\mathcal{S}_3$, a
Hom-superalgebra on $V$ defined by the multiplication $\mu$ and a
homomorphism $\alpha$ is said to  be $G$-Hom-associative
superalgebra if
\begin{equation}\label{admiSAL}
\sum_{\tau\in G} {(-1)^{\varepsilon ({\tau})}
(-1)^{|\tau(x_1,x_2,x_3)|}\mathfrak{as}_{\alpha}\circ
\tau(x_1,x_2,x_3)}=0.
\end{equation}
where $x_i$ are in $V$, $(-1)^{\varepsilon ({\tau})}$ is the
signature of the permutation and $|\tau(x_1,x_2,x_3)|$ is the parity
of $\tau$ defined above.
\end{definition}
In particular, we call $G$-associative
superalgebra a $G$-Hom-associative
superalgebra where $\alpha$ is the identity map.

The following result is a graded version  of the results obtained in
(\cite{GR04,MS}).

\begin{theorem}[\cite{AmmarMakhlouf2009}]
Let $G$ be a subgroup of the permutations group $\mathcal{S}_3$.
Then any $G$-Hom-associative superalgebra is a Hom-Lie admissible
superalgebra.
\end{theorem}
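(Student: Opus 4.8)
The plan is to mimic, in the $\mathbb{Z}_2$-graded setting, the proof of the ungraded statement that any $G$-Hom-associative algebra is Hom-Lie-admissible. Recall that by Lemma \ref{JacobiSuperCommutator2}, a Hom-superalgebra $\A=(V,\mu,\alpha)$ is Hom-Lie admissible precisely when the full alternating sum over $\mathcal{S}_3$, namely
\begin{equation*}
\sum_{\tau\in \mathcal{S}_3} (-1)^{\varepsilon(\tau)}(-1)^{|\tau(x_1,x_2,x_3)|}\mathfrak{as}_{\alpha}\circ \tau(x_1,x_2,x_3),
\end{equation*}
vanishes for all homogeneous $x_1,x_2,x_3\in V$. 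So the task reduces to showing that the defining identity \eqref{admiSAL} for a subgroup $G$, which asserts the vanishing of the sum restricted to $\tau\in G$, forces the vanishing of the sum over all of $\mathcal{S}_3$.

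The key step is a decomposition of $\mathcal{S}_3$ into left cosets of $G$. Pick coset representatives $g_1,\dots,g_k\in \mathcal{S}_3$ so that $\mathcal{S}_3 = \bigsqcup_{j} g_j G$. Then
\begin{equation*}
\sum_{\tau\in \mathcal{S}_3}(-1)^{\varepsilon(\tau)}(-1)^{|\tau|}\mathfrak{as}_{\alpha}\circ\tau = \sum_{j}\sum_{\rho\in G}(-1)^{\varepsilon(g_j\rho)}(-1)^{|g_j\rho|}\mathfrak{as}_{\alpha}\circ (g_j\rho).
\end{equation*}
For each fixed $j$ I would argue that the inner sum $\sum_{\rho\in G}(-1)^{\varepsilon(g_j\rho)}(-1)^{|g_j\rho|}\mathfrak{as}_{\alpha}\circ(g_j\rho)$ equals zero. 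The idea is to recognize that acting on a triple $(x_1,x_2,x_3)$, this inner sum is, up to the global sign $(-1)^{\varepsilon(g_j)}$ and the sign $(-1)^{|g_j(\cdots)|}$ coming from the outer permutation, exactly the instance of identity \eqref{admiSAL} evaluated at the permuted (and relabelled) triple $g_j(x_1,x_2,x_3)$. One uses the multiplicativity of the signature, $\varepsilon(g_j\rho)=\varepsilon(g_j)\varepsilon(\rho)$, together with the cocycle-type additivity of the parity, $|\sigma_i\sigma_j(x_1,x_2,x_3)| = |\sigma_j(x_1,x_2,x_3)| + |\sigma_i(\sigma_j(x_1,x_2,x_3))|$, as set up before Lemma \ref{JacobiSuperCommutator2}. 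These two facts let one pull the contribution of $g_j$ out of each summand as a common factor, leaving inside precisely $\sum_{\rho\in G}(-1)^{\varepsilon(\rho)}(-1)^{|\rho(g_j(x_1,x_2,x_3))|}\mathfrak{as}_{\alpha}\circ\rho(g_j(x_1,x_2,x_3))$, which is zero by \eqref{admiSAL} applied to the triple $g_j(x_1,x_2,x_3)$. Summing over $j$ gives the result, and then Lemma \ref{JacobiSuperCommutator2} finishes the proof.

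The main obstacle is the bookkeeping of the sign $(-1)^{|\tau(x_1,x_2,x_3)|}$: unlike the signature, the parity is not a homomorphism into $\mathbb{Z}_2$ in a naive sense (it depends on the $x_i$), so one must carefully verify that the additivity rule $|g_j\rho(x_1,x_2,x_3)| = |g_j(\rho(x_1,x_2,x_3))| + |\rho(x_1,x_2,x_3)|$ is exactly what is needed to factor the sign $(-1)^{|g_j(\rho(x_1,x_2,x_3))|}$ out of the $\rho$-sum. Concretely, $(-1)^{|g_j\rho(x_1,x_2,x_3)|} = (-1)^{|\rho(x_1,x_2,x_3)|}(-1)^{|g_j(y_1,y_2,y_3)|}$ where $(y_1,y_2,y_3)=\rho(x_1,x_2,x_3)$; one then must check that $(-1)^{|g_j(y_1,y_2,y_3)|}$, viewed as a function of the original variables, is genuinely constant over $\rho\in G$ up to a relabelling that matches the reindexing inside $\mathfrak{as}_\alpha\circ(g_j\rho)$. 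Once this compatibility is pinned down, the coset argument is purely formal, exactly paralleling the ungraded proof of Theorem \ref{thmConstrGHomAss}'s predecessor, and no computation with $\mathfrak{as}_\alpha$ itself is required.
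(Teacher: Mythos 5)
Your proposal follows essentially the same route as the paper: decompose $\mathcal{S}_3$ into cosets of $G$, observe that the contribution of each coset vanishes because it is the defining identity \eqref{admiSAL} applied to a permuted triple, and then conclude via Lemma \ref{JacobiSuperCommutator2}. Your sign bookkeeping for the parity $(-1)^{|\tau(x_1,x_2,x_3)|}$ is in fact more careful than the paper's own proof, which simply asserts that each inner coset sum is zero.
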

\begin{proof}The supersymmetry follows straightaway from the
definition.

We have a subgroup $G$ in $\mathcal{S}_3$. Take the set of conjugacy
class $\{g G\}_{g\in I}$  where $I\subseteq G$, and for any $\tau_1,
\tau_2\in I,\sigma_1 \neq \tau_2 \Rightarrow \tau_1 G\bigcap \tau_2
G =\emptyset$. Then

\begin{eqnarray*}&& \sum_{\tau\in \mathcal{S}_3}{(-1)^{\varepsilon ({\tau})}(-1)^{|\tau(x_1,x_2,x_3)|}
\mathfrak{as}_{\alpha}\circ \tau}(x_1,x_2,x_3)=\\&& \sum_{\tau_1\in
I}{\sum_{\tau_2\in \tau_1 G}{(-1)^{\varepsilon
({\tau_2})}(-1)^{|\tau_2(x_1,x_2,x_3)|}\mathfrak{as}_ {\alpha}\circ
\tau_2}}(x_1,x_2,x_3)=0
\end{eqnarray*}
where $(x_1,x_2,x_3)\in V$, with $V$  the underlaying superspace of
the $G$-Hom-associative superalgebra.
\end{proof}
It follows that in particular, we have:
\begin{corollary}
Let $G$ be a subgroup of the permutations group $\mathcal{S}_3$.
Then any $G$-associative superalgebra is a Lie admissible
superalgebra.
\end{corollary}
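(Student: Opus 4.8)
The plan is to obtain this as an immediate specialization of the preceding Theorem to the case $\alpha=\id$. Recall from the definitions of this subsection that a $G$-associative superalgebra is, by fiat, nothing but a $G$-Hom-associative superalgebra $(V,\mu,\alpha)$ in which $\alpha=\id$; and similarly, a Lie admissible superalgebra is precisely a Hom-Lie admissible superalgebra with $\alpha=\id$, since substituting $\alpha=\id$ in the Hom-superJacobi identity \eqref{JacobyHomsuperLie} recovers the classical superJacobi identity for the supercommutator bracket \eqref{commutator} (this is the content of the remark following Lemma \ref{JacobiSuperCommutator}, where the Hom-associator $\mathfrak{as_\alpha}$ becomes the ordinary associator).

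Concretely, I would argue as follows. Let $(V,\mu)$ be a $G$-associative superalgebra, and regard it as the $G$-Hom-associative superalgebra $(V,\mu,\id)$: the defining identity \eqref{admiSAL} holds, now with $\mathfrak{as_\alpha}$ read as the usual associator, and the parities $|\tau(x_1,x_2,x_3)|$ are exactly those tabulated above, independent of $\alpha$. By the Theorem, $(V,\mu,\id)$ is a Hom-Lie admissible superalgebra, i.e.\ the associated supercommutator satisfies the Hom-superJacobi identity \eqref{JacobyHomsuperLie} with $\alpha=\id$. Unravelling this identity with $\alpha=\id$ yields exactly the classical superJacobi identity, so $(V,\mu)$ is a Lie admissible superalgebra, as claimed.

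There is essentially no obstacle to overcome: the only thing to verify is that putting $\alpha=\id$ is harmless for all the ingredients — the associator, the supercommutator, and the permutation parities — which is transparent from their definitions. If one preferred a self-contained proof, one could instead repeat the conjugacy-class decomposition of $\mathcal{S}_3$ used in the proof of the Theorem verbatim, replacing $\mathfrak{as_\alpha}$ by the ordinary associator throughout; but quoting the Theorem directly is shorter and cleaner.
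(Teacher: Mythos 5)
Your proposal is correct and matches the paper exactly: the corollary is stated there as an immediate consequence of the preceding theorem, obtained by specializing to $\alpha=\id$ (the paper even defines a $G$-associative superalgebra as a $G$-Hom-associative superalgebra with $\alpha=\id$, so no further verification is needed). Your additional checks that the associator, supercommutator, and permutation parities behave well under $\alpha=\id$ are harmless but not required.
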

Now, we provide a classification of the Hom-Lie admissible
superalgebras through $G$-Hom-associative superalgebras.

 The subgroups of $\mathcal{S}_3$, which are
$G_1=\{Id\}, ~G_2=\{Id,\sigma_{1}\},~G_3=\{Id,\sigma_{2}\},
~G_4=\{Id,\sigma_2\sigma_1\sigma_2\},~G_5=A_3 ,~G_6=\mathcal{S}_3,$
where $A_3$ is the alternating group, lead to  the following six classes
 of Hom-Lie admissible superalgebras.
\begin{itemize}
\item The  $G_1$-Hom-associative superalgebras  are the Hom-associative
superalgebras defined above.

\item The  $G_2$-Hom-associative superalgebras satisfy the identity
\begin{eqnarray*}
&&\mu(\alpha(x),\mu (y,z))- \mu (\mu (x,y),\alpha (z))=\\&&
\quad\quad \quad (-1)^{|x||y|}(\mu(\alpha(y),\mu (x,z))-\mu (\mu
(y,x),\alpha (z)))
\end{eqnarray*}
\item The  $G_3$-Hom-associative superalgebras satisfy the identity
\begin{eqnarray*}
&&\mu(\alpha(x),\mu (y,z))-\mu (\mu (x,y),\alpha (z))=\\&&
\quad\quad \quad (-1)^{|y||z|}(\mu(\alpha(x),\mu (z,y))-\mu (\mu
(x,z),\alpha (y)))
\end{eqnarray*}
\item The  $G_4$-Hom-associative superalgebras satisfy the identity
\begin{eqnarray*}
&&\mu(\alpha(x),\mu (y,z))-\mu (\mu (x,y),\alpha (z))=\\&&
\quad\quad \quad (-1)^{|x||y|+|x||z|+|y||z|}(\mu(\alpha(z),\mu
(y,x))-\mu (\mu (z,y),\alpha (x)))
\end{eqnarray*}
\item The  $G_5$-Hom-associative superalgebras satisfy the identity
\begin{eqnarray*}
&& -(-1)^{|x||z|+|y||z|}(\mu(\alpha(z),\mu (x,y)) +\mu (\mu
(z,x),\alpha (y)))= \\&& \quad\quad \quad \mu(\alpha(x),\mu
(y,z))-\mu (\mu (x,y),\alpha (z))\\&& \quad\quad \quad
+(-1)^{|x||y|+|x||z|}(\mu(\alpha(y),\mu (z,x)+\mu (\mu (y,z),\alpha
(x)))
\end{eqnarray*}
\item The  $G_6$-Hom-associative superalgebras are the Hom-Lie admissible
superalgebras.
\end{itemize}

\begin{remark}
Moreover, if in the previous identities we consider $\alpha=\id $,
then we obtain a classification of Lie-admissible superalgebras.
\end{remark}
\begin{remark}A  $G_2$-Hom-associative (resp. $G_2$-associative)
 superalgebras  might be called \emph{Hom-Vinberg superalgebras}
 (resp. \emph{Vinberg superalgebras}) or \emph{Hom-left-symmetric superalgebras}
 (resp. left-symmetric superalgebras).

Similarly, A  $G_3$-Hom-associative (resp. $G_3$-associative) superalgebras might be called
\emph{Hom-pre-Lie superalgebras} (resp. \emph{pre-Lie
superalgebras}) or \emph{Hom-right-symmetric superalgebras}
 (resp. right-symmetric superalgebras).

 Notice that a Hom-pre-Lie superalgebra is the
opposite algebra of a Hom-Vinberg superalgebra. Therefore, they
actually form a same class.
\end{remark}

The following result generalizes the theorem \ref{thm:SALmorphism} to  any
$G$-Hom-associative superalgebra.
\begin{theorem}
\label{thm:GAssSALmorphism} Let $(V,[\cdot,\cdot])$ be a
$G$-associative superalgebra  and  $\alpha : V\rightarrow V$ be an
even  $G$-Hom-associative superalgebra endomorphism. Then
$(V,[\cdot,\cdot]_\alpha,\alpha)$, where
$[x,y]_\alpha=\alpha([x,y])$,  is a $G$-Hom-associative
superalgebra.

Moreover, suppose that  $(V',[\cdot,\cdot]')$ is another
$G$-associative superalgebra and  $\alpha ' : V'\rightarrow V'$ is a
$G$-associative superalgebra endomorphism. If $f:V\rightarrow V'$ is
a $G$-associative superalgebra morphism that satisfies
$f\circ\alpha=\alpha'\circ f$ then
$$f:(V,[\cdot,\cdot]_\alpha,\alpha)\longrightarrow (V',[\cdot,\cdot]',\alpha ')
$$
is a morphism of $G$-Hom-associative superalgebras.
\end{theorem}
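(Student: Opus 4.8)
The plan is to reduce the defining identity \eqref{admiSAL} for the twisted triple $(V,\mu_\alpha,\alpha)$ to the same identity for the untwisted $G$-associative superalgebra $(V,\mu)$, following the pattern of Theorem \ref{thmConstrGHomAss} but carrying along the $\mathbb{Z}_2$-grading signs. Throughout I write $\mu$ for the (super)multiplication of the $G$-associative superalgebra (denoted $[\cdot,\cdot]$ in the statement), $\mu_\alpha=\alpha\circ\mu$ for the twisted multiplication, $\mathfrak{as}_{\id}$ for the ordinary associator of $(V,\mu)$, and $\mathfrak{as}_{\alpha}$ for the Hom-associator of $(V,\mu_\alpha,\alpha)$.

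First I would record the elementary relation between the two associators. Using only that $\alpha$ is an even algebra endomorphism of $(V,\mu)$, a direct computation gives, for all homogeneous $x,y,z\in V$,
$$
\mu_\alpha(\alpha(x),\mu_\alpha(y,z))=\alpha^2(\mu(x,\mu(y,z))),\qquad
\mu_\alpha(\mu_\alpha(x,y),\alpha(z))=\alpha^2(\mu(\mu(x,y),z)),
$$
and hence
$$
\mathfrak{as}_{\alpha}(x,y,z)=\alpha^2\big(\mathfrak{as}_{\id}(x,y,z)\big)
$$
as trilinear maps. (The same computation shows $\alpha\circ\mu_\alpha=\mu_\alpha\circ(\alpha\times\alpha)$, so $(V,\mu_\alpha,\alpha)$ is automatically multiplicative.) Next, since $\alpha$ is even, for each $\tau\in\mathcal{S}_3$ and all homogeneous $x_1,x_2,x_3$ the parity $|\tau(x_1,x_2,x_3)|$ is unaffected by the twisting, and the scalar factors $(-1)^{\varepsilon(\tau)}(-1)^{|\tau(x_1,x_2,x_3)|}$ pull out of the linear map $\alpha^2$. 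Therefore
$$
\sum_{\tau\in G}(-1)^{\varepsilon(\tau)}(-1)^{|\tau(x_1,x_2,x_3)|}\,\mathfrak{as}_{\alpha}\circ\tau(x_1,x_2,x_3)
=\alpha^2\Big(\sum_{\tau\in G}(-1)^{\varepsilon(\tau)}(-1)^{|\tau(x_1,x_2,x_3)|}\,\mathfrak{as}_{\id}\circ\tau(x_1,x_2,x_3)\Big).
$$
By hypothesis $(V,\mu)$ is $G$-associative, i.e.\ the bracketed sum vanishes (this is \eqref{admiSAL} with $\alpha=\id$); applying $\alpha^2$ gives $0$, so $(V,\mu_\alpha,\alpha)$ satisfies \eqref{admiSAL} and is $G$-Hom-associative.

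For the morphism statement, I must check that $f$ intertwines both the twisting maps and the twisted multiplications. The condition $f\circ\alpha=\alpha'\circ f$ is a hypothesis; for the multiplications, using that $f$ is a superalgebra morphism together with $f\circ\alpha=\alpha'\circ f$,
$$
f(\mu_\alpha(x,y))=f(\alpha(\mu(x,y)))=\alpha'(f(\mu(x,y)))=\alpha'(\mu'(f(x),f(y)))=\mu'_{\alpha'}(f(x),f(y)),
$$
so $f$ is a morphism of Hom-superalgebras, hence of $G$-Hom-associative superalgebras. The only point requiring care is that the grading signs $(-1)^{|\tau(x_1,x_2,x_3)|}$ genuinely commute with $\alpha^2$; this holds because $\alpha$ is even and these parities depend only on the $|x_i|$ and not on $\alpha$. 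Consequently no real obstacle arises, and the remaining steps are just the graded analogue of the proof of Theorem \ref{thmConstrGHomAss}.
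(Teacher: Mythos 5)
Your proof is correct and follows exactly the intended route: the paper's own ``proof'' is only the remark that the argument is similar to that of Theorem \ref{thm:SALmorphism}, and the details you supply --- the identity $\mathfrak{as}_{\alpha}=\alpha^{2}\circ\mathfrak{as}_{\id}$ for the twisted product, the observation that the parity signs $(-1)^{\varepsilon(\tau)}(-1)^{|\tau(x_1,x_2,x_3)|}$ are scalars unaffected by the even map $\alpha$ and so pass through $\alpha^{2}$, and the one-line check of the morphism property --- are precisely the standard twisting argument of Theorem \ref{thmConstrGHomAss} carried over to the graded case. Nothing is missing.
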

\begin{proof}Similar to proof of theorem \ref{thm:SALmorphism}.
\end{proof}
{\ding{167}}
\section{Hom-alternative algebras}
Now, we introduce and discuss the notion of Hom-alternative algebra,
following \cite{Mak:HomAlternativeHomJord}.
\begin{definition}[\cite{Mak:HomAlternativeHomJord}]
A \emph{left Hom-alternative} algebra (resp. \emph{right
Hom-alternative algebra}) is a triple $(V,\mu,\alpha )$ consisting
of a $\K$-linear space $V$, a linear map $\alpha: V \rightarrow V$
and a multiplication $\mu: V\otimes V \rightarrow V$ satisfying, for any $x,y$ in $V$, the
left Hom-alternative identity, that is
\begin{equation}\label{HomLeftAlternative}
\mu (\alpha (x),\mu(x, y))=\mu(\mu(x, x) , \alpha (y)),
\end{equation}
respectively, right Hom-alternative identity, that is
\begin{equation}\label{HomRightAlternative}
\mu (\alpha(x),\mu(y, y))=\mu(\mu(x,y) ,\alpha( y)).
\end{equation}

A Hom-alternative algebra is one which is both left and right
Hom-alternative algebra.
\end{definition}

\begin{remark}
Any Hom associative algebra is a Hom-alternative algebra.
\end{remark}

Using the Hom-associator \eqref{HomAssociator}, the condition
\eqref{HomLeftAlternative} (resp. \eqref{HomRightAlternative}) may
be written using Hom-associator respectively
\begin{eqnarray*}
 \mathfrak{as}_\alpha(x,x,y)=0 ,\quad \mathfrak{as}_\alpha(y,x,x)=0.
 \end{eqnarray*}

By linearization, we have the following equivalent definition of
left and right Hom-alternative algebras.

\begin{proposition}
A triple  $(V,\mu,\alpha)$ is a left Hom-alternative algebra (resp.
right alternative algebra) if and only if the identity
\begin{equation}\label{HomLeftAlternativeLineariz}
\mu (\alpha (x),\mu(y, z))-\mu(\mu(x, y),\alpha( z))+\mu (\alpha
(y), \mu(x, z))-\mu(\mu(y, x), \alpha (z))=0.
\end{equation}
respectively,
\begin{equation}\label{HomRightAlternativeLineariz}
\mu (\alpha(x),\mu (y, z))-\mu (\mu (x, y), \alpha(z))+\mu
(\alpha(x), \mu (z, y))-\mu (\mu (x, z), \alpha(y))=0.
\end{equation}
holds.
\end{proposition}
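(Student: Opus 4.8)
The plan is to prove the left Hom-alternative equivalence; the right one then follows by the same argument applied to the last two slots of the Hom-associator. The starting point is the reformulation recorded just above the statement: \eqref{HomLeftAlternative} says exactly that $\mathfrak{as}_\alpha(x,x,y)=0$ for all $x,y\in V$, and expanding \eqref{HomLeftAlternativeLineariz} through the definition \eqref{HomAssociator} shows that its left-hand side is nothing but $\mathfrak{as}_\alpha(x,y,z)+\mathfrak{as}_\alpha(y,x,z)$. Hence the proposition amounts to the assertion that, over a field of characteristic $0$, the identity $\mathfrak{as}_\alpha(x,x,y)=0$ (for all $x,y$) is equivalent to $\mathfrak{as}_\alpha(x,y,z)+\mathfrak{as}_\alpha(y,x,z)=0$ (for all $x,y,z$). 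Throughout I will use that $\mathfrak{as}_\alpha$ is trilinear, which is immediate from \eqref{HomAssociator} since $\mu$ is bilinear and $\alpha$ is linear.

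For the direction $\Leftarrow$, I would specialize $y=x$ in $\mathfrak{as}_\alpha(x,y,z)+\mathfrak{as}_\alpha(y,x,z)=0$ to obtain $2\,\mathfrak{as}_\alpha(x,x,z)=0$, and divide by $2$ — permissible because $2$ is invertible in $\K$ — to recover \eqref{HomLeftAlternative}. For the direction $\Rightarrow$, I would polarize: replace $x$ by $x+y$ in $\mathfrak{as}_\alpha(x,x,z)=0$ and expand by trilinearity,
\begin{align*}
0 &= \mathfrak{as}_\alpha(x+y,x+y,z) \\
&= \mathfrak{as}_\alpha(x,x,z)+\mathfrak{as}_\alpha(x,y,z)+\mathfrak{as}_\alpha(y,x,z)+\mathfrak{as}_\alpha(y,y,z);
\end{align*}
the first and last summands vanish by hypothesis, and what remains is precisely \eqref{HomLeftAlternativeLineariz}.

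The right Hom-alternative case is structurally identical: \eqref{HomRightAlternative} reads $\mathfrak{as}_\alpha(y,x,x)=0$ for all $x,y$, the left-hand side of \eqref{HomRightAlternativeLineariz} equals $\mathfrak{as}_\alpha(x,y,z)+\mathfrak{as}_\alpha(x,z,y)$, and one passes between the two by replacing the argument repeated in the second and third slots by a sum $y+z$ (for $\Rightarrow$) and by specializing that sum back to a single variable and dividing by $2$ (for $\Leftarrow$). I do not expect any genuine obstacle here; the only step that truly uses the hypotheses is the division by $2$, which rests on $\K$ having characteristic $0$ (indeed characteristic $\neq 2$ would suffice), as assumed in Section~2.
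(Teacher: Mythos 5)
Your proof is correct and follows exactly the paper's own argument: polarize $\mathfrak{as}_\alpha(x+y,x+y,z)=0$ (resp.\ $\mathfrak{as}_\alpha(x,y+z,y+z)=0$) for one direction, and specialize the repeated variables and divide by $2$ for the converse. The only difference is that you make the use of characteristic $\neq 2$ explicit, which the paper leaves implicit.
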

\begin{proof}
We assume that,  for any $x,y,z\in V$,
$\mathfrak{as}_\alpha(x,x,z)=0 $ (left alternativity), then we
expand $\mathfrak{as}_\alpha(x+y,x+y,z)=0 $.

The proof for right Hom-alternativity is obtained by expanding
$\mathfrak{as}_\alpha(x,y+z,y+z)=0 .$

Conversely, we set $x=y$ in (\ref{HomLeftAlternativeLineariz}), respectively $y=z$ in (\ref{HomRightAlternativeLineariz}).
\end{proof}

  \begin{remark} The multiplication could be considered as a linear map
  $\mu : V \otimes V \rightarrow V$, then the condition
  (\ref{HomLeftAlternativeLineariz}) (resp. (\ref{HomRightAlternativeLineariz})) writes
  \begin{equation}\label{HomLeftAlternativeLineariz2}
\mu \circ (\alpha\otimes \mu-\mu \otimes \alpha)\circ (id^{\otimes
3} +\sigma_{1})=0,
\end{equation}
respectively
\begin{equation}\label{HomRightAlternativeLineariz2}
\mu \circ (\alpha\otimes \mu-\mu \otimes \alpha)\circ (id^{\otimes
3}+\sigma_{2})=0.
\end{equation}
where $id$ stands for the identity map and $\sigma_{1}$ and
$\sigma_{2}$ stands for trilinear maps defined for any $x_1,x_2, x_3\in V$ by
\begin{eqnarray*}
\sigma_{1}(x_1\otimes x_2\otimes x_3)=x_2\otimes x_1\otimes x_3,\ \
\sigma_{2}(x_1\otimes x_2\otimes x_3)=x_1\otimes x_3\otimes
x_2.
\end{eqnarray*}

In terms of associators, the identities
(\ref{HomLeftAlternativeLineariz}) (resp.
(\ref{HomRightAlternativeLineariz})) are equivalent respectively to
\begin{equation}\label{Alt0}\mathfrak{as}_\alpha+\mathfrak{as}_\alpha\circ\sigma_{1}=0 \quad
\text{ and  }\
\mathfrak{as}_\alpha+\mathfrak{as}_\alpha\circ\sigma_{2}=0.
\end{equation}
  \end{remark}
  Hence, for any $x,y,z\in V$, we have
  \begin{equation}\label{Alt1}
  \mathfrak{as}_\alpha(x,y,z)=-\mathfrak{as}_\alpha(y,x,z)\quad \text{ and
  }\quad \mathfrak{as}_\alpha(x,y,z)=-\mathfrak{as}_\alpha(x,z,y).
  \end{equation}
  We have also the following property.
\begin{lemma}\label{HomAlternating1}
Let $(V,\mu,\alpha)$ be an Hom-alternative algebra. Then
\begin{equation}\label{Alt2}
\mathfrak{as}_\alpha(x,y,z)=-\mathfrak{as}_\alpha(z,y,x).
\end{equation}
\end{lemma}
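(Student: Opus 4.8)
\textbf{Proof plan for Lemma \ref{HomAlternating1}.}

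The plan is to use the two skew-symmetry relations from \eqref{Alt1} — namely $\mathfrak{as}_\alpha(x,y,z)=-\mathfrak{as}_\alpha(y,x,z)$ (swap first two slots) and $\mathfrak{as}_\alpha(x,y,z)=-\mathfrak{as}_\alpha(x,z,y)$ (swap last two slots) — and compose them to generate the swap of the outer two slots. Since $\mathcal{S}_3$ is generated by the adjacent transpositions $\sigma_1=(1\,2)$ and $\sigma_2=(2\,3)$, and the outer transposition $(1\,3)$ equals $\sigma_1\sigma_2\sigma_1$ (equivalently $\sigma_2\sigma_1\sigma_2$), applying the two given sign rules three times in that order will produce $\mathfrak{as}_\alpha(x,y,z)=(-1)^3\mathfrak{as}_\alpha(z,y,x)=-\mathfrak{as}_\alpha(z,y,x)$, which is exactly \eqref{Alt2}.

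Concretely, I would carry out the chain
\begin{eqnarray*}
\mathfrak{as}_\alpha(x,y,z) &=& -\,\mathfrak{as}_\alpha(x,z,y) \\
&=& \mathfrak{as}_\alpha(z,x,y) \\
&=& -\,\mathfrak{as}_\alpha(z,y,x),
\end{eqnarray*}
where the first equality is the second relation in \eqref{Alt1} applied to $(x,y,z)$, the second is the first relation in \eqref{Alt1} applied to the triple $(x,z,y)$ (swapping its first two entries), and the third is again the second relation in \eqref{Alt1} applied to $(z,x,y)$ (swapping its last two entries). Reading off the endpoints gives $\mathfrak{as}_\alpha(x,y,z)=-\mathfrak{as}_\alpha(z,y,x)$.

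There is essentially no obstacle here: the lemma is a purely formal consequence of the two linearized Hom-alternative identities already established in \eqref{Alt1}, which in turn follow from the definition via the linearization argument in the preceding proposition. The only point requiring a little care is bookkeeping of which adjacent pair is being transposed at each step and the resulting accumulation of three minus signs; no properties of $\mu$ or $\alpha$ beyond bilinearity are needed, and multiplicativity of $\alpha$ is not invoked. One could equally phrase the argument as: the associator changes sign under the two generating transpositions of $\mathcal{S}_3$, hence it is an alternating (sign-equivariant) function of its three arguments, and \eqref{Alt2} is the instance corresponding to the odd permutation $(1\,3)$.
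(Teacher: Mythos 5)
Your proof is correct and follows essentially the same route as the paper: both arguments are purely formal consequences of the two sign relations in \eqref{Alt1}. The paper organizes the computation as showing $\mathfrak{as}_\alpha(x,y,z)+\mathfrak{as}_\alpha(z,y,x)=-\mathfrak{as}_\alpha(y,x,z)-\mathfrak{as}_\alpha(y,z,x)=0$ in two steps, whereas you chain three adjacent transpositions, but the content is identical.
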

\begin{proof}
Using (\ref{Alt0}), we have
\begin{eqnarray*}
\mathfrak{as}_\alpha(x,y,z)+\mathfrak{as}_\alpha(z,y,x)=
-\mathfrak{as}_\alpha(y,x,z)-\mathfrak{as}_\alpha(y,z,x)
=0.
\end{eqnarray*}
\end{proof}
\begin{remark}
The identities \eqref{Alt0},\eqref{Alt1} lead to the fact that an
algebra is Hom-alternative if and only if the Hom-associator
$\mathfrak{as}_\alpha(x,y,z)$ is an alternating function of its
arguments, that is
$$\mathfrak{as}_\alpha(x,y,z)=-\mathfrak{as}_\alpha(y,x,z)=
-\mathfrak{as}_\alpha(x,z,y)=-\mathfrak{as}_\alpha(z,y,x).$$
\end{remark}
\begin{proposition}
A Hom-alternative algebra is Hom-flexible.
\end{proposition}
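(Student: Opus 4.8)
The plan is to read off flexibility directly from the alternating property of the Hom-associator that has just been established. Recall from Lemma~\ref{flexi23} that a Hom-algebra is flexible precisely when $\mathfrak{as}_\alpha(x,y,x)=0$ for all $x,y\in V$, equivalently when $\mathfrak{as}_\alpha(x,y,z)=-\mathfrak{as}_\alpha(z,y,x)$ for all $x,y,z\in V$. The second of these is exactly the content of Lemma~\ref{HomAlternating1}, equation~\eqref{Alt2}, which holds in any Hom-alternative algebra. Hence the proposition follows at once by quoting Lemma~\ref{HomAlternating1} together with the equivalence $(1)\Leftrightarrow(3)$ of Lemma~\ref{flexi23}.

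Alternatively, and perhaps more transparently, I would argue directly: take equation~\eqref{Alt2}, namely $\mathfrak{as}_\alpha(x,y,z)=-\mathfrak{as}_\alpha(z,y,x)$, and specialize $z=x$. This yields $\mathfrak{as}_\alpha(x,y,x)=-\mathfrak{as}_\alpha(x,y,x)$, so that $2\,\mathfrak{as}_\alpha(x,y,x)=0$. Since $\K$ has characteristic $0$ (in particular characteristic $\neq 2$), we conclude $\mathfrak{as}_\alpha(x,y,x)=0$ for all $x,y\in V$, which is the Hom-flexible identity~\eqref{flexible} expressed through the Hom-associator.

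There is essentially no obstacle here: the only point worth flagging is the implicit use of $\mathrm{char}\,\K\neq 2$ when cancelling the factor $2$, and this is guaranteed by the standing hypothesis in the Preliminaries. One could also derive the statement without Lemma~\ref{HomAlternating1} by starting from the linearized identities~\eqref{Alt1}: from $\mathfrak{as}_\alpha(x,y,z)=-\mathfrak{as}_\alpha(y,x,z)$ and $\mathfrak{as}_\alpha(x,y,z)=-\mathfrak{as}_\alpha(x,z,y)$ one recovers $\mathfrak{as}_\alpha(x,y,z)=-\mathfrak{as}_\alpha(z,y,x)$ by composing the two sign changes, and then proceeds as above; but invoking Lemma~\ref{HomAlternating1} is the cleanest route.
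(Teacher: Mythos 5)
Your proposal is correct and follows essentially the same route as the paper: the paper's proof also specializes Lemma~\ref{HomAlternating1} to $z=x$ to obtain $\mathfrak{as}_\alpha(x,y,x)=-\mathfrak{as}_\alpha(x,y,x)$ and concludes $\mathfrak{as}_\alpha(x,y,x)=0$. Your explicit remark about needing $\mathrm{char}\,\K\neq 2$ is a sensible point the paper leaves implicit.
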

\begin{proof}
Using  lemma \ref{HomAlternating1}, we have
$\mathfrak{as}_\alpha(x,y,x)=-\mathfrak{as}_\alpha(x,y,x)$.

Therefore, $\mathfrak{as}_\alpha(x,y,x)=0$.
\end{proof}

\begin{proposition}
Let $(V,\mu,\alpha)$ be a Hom-alternative algebra and $x,y,z\in V$.

If $x$ and $y$ anticommute, that is $\mu (x,y)=-\mu (y,x)$, then we
have
\begin{equation}
\mu (\alpha (x),\mu (y,z))=-\mu (\alpha (y),\mu (x,z),
\end{equation}
and
\begin{equation}
\mu (\mu (z,x),\alpha (y))=-\mu (\mu (z,y),\alpha (x)).
\end{equation}
\end{proposition}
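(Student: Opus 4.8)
The plan is to use the alternating property of the Hom-associator established in Lemma~\ref{HomAlternating1} together with the identities \eqref{Alt1}, applied to the anticommutation hypothesis. First I would prove the first equation. Starting from the left Hom-alternative linearization in the form \eqref{Alt1}, namely $\mathfrak{as}_\alpha(x,y,z)=-\mathfrak{as}_\alpha(y,x,z)$, I would write this out using the definition \eqref{HomAssociator} of the Hom-associator:
\begin{equation*}
\mu(\alpha(x),\mu(y,z))-\mu(\mu(x,y),\alpha(z))=-\mu(\alpha(y),\mu(x,z))+\mu(\mu(y,x),\alpha(z)).
\end{equation*}
Now the hypothesis $\mu(x,y)=-\mu(y,x)$ forces $\mu(\mu(x,y),\alpha(z))+\mu(\mu(y,x),\alpha(z))=0$, so moving terms around the surviving part of the identity reads exactly $\mu(\alpha(x),\mu(y,z))=-\mu(\alpha(y),\mu(x,z))$, which is the first claimed equation.

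For the second equation I would run the symmetric argument on the right: use the identity $\mathfrak{as}_\alpha(x,y,z)=-\mathfrak{as}_\alpha(x,z,y)$ from \eqref{Alt1}, but applied with the roles arranged so that the "inner" product $\mu(x,y)$ appears as $\mu$ of the two anticommuting elements. Concretely, write $\mathfrak{as}_\alpha(z,x,y)=-\mathfrak{as}_\alpha(z,y,x)$ (this is precisely \eqref{Alt2} of Lemma~\ref{HomAlternating1}), expand both sides via \eqref{HomAssociator}:
\begin{equation*}
\mu(\alpha(z),\mu(x,y))-\mu(\mu(z,x),\alpha(y))=-\mu(\alpha(z),\mu(y,x))+\mu(\mu(z,y),\alpha(x)),
\end{equation*}
and observe that the anticommutation $\mu(x,y)=-\mu(y,x)$ makes the two terms $\mu(\alpha(z),\mu(x,y))$ and $\mu(\alpha(z),\mu(y,x))$ cancel when brought to the same side. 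What remains is $-\mu(\mu(z,x),\alpha(y))=\mu(\mu(z,y),\alpha(x))$, i.e. $\mu(\mu(z,x),\alpha(y))=-\mu(\mu(z,y),\alpha(x))$, the second claimed equation.

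I do not expect a genuine obstacle here; the whole proof is a short substitution into the alternating-associator identities, and the only thing to be careful about is choosing the right instance of \eqref{Alt1}/\eqref{Alt2} so that the pair of terms that cancels is exactly the pair built from the anticommuting product $\mu(x,y)$. The mild subtlety is that for the first equation one wants the cancellation among the "$\mu(\mu(\cdot,\cdot),\alpha(z))$" terms (so use the $\sigma_1$-type identity, swapping the first two slots), whereas for the second equation one wants the cancellation among the "$\mu(\alpha(z),\mu(\cdot,\cdot))$" terms (so use the $\sigma_2$-type identity via Lemma~\ref{HomAlternating1}, swapping the last two slots). Once the correct identity is selected, each equation follows in one line, so I would present both computations in full since they are short.
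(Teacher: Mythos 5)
Your proof is correct and is essentially the paper's own argument: the first identity is the linearized left-alternative identity \eqref{HomLeftAlternativeLineariz} (equivalently the first identity of \eqref{Alt1}) with the two terms $\mu(\mu(x,y),\alpha(z))$ and $\mu(\mu(y,x),\alpha(z))$ cancelled by anticommutativity, and the second is the linearized right-alternative identity applied to the triple $(z,x,y)$, exactly as the paper does. The only slip is cosmetic: $\mathfrak{as}_\alpha(z,x,y)=-\mathfrak{as}_\alpha(z,y,x)$ is an instance of the second identity of \eqref{Alt1} (swapping the last two slots), not of \eqref{Alt2} of Lemma \ref{HomAlternating1}, which swaps the first and third slots; your closing paragraph already identifies the correct ($\sigma_2$-type) identity, so the argument stands.
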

\begin{proof}
The left alternativity leads to
\begin{equation}
\mu (\alpha (x),\mu(y, z))-\mu(\mu(x, y),\alpha( z))+\mu (\alpha
(y), \mu(x, z))-\mu(\mu(y, x), \alpha (z))=0.
\end{equation}
Since $\mu(x, y)=-\mu(y, x)$, then the previous identity becomes
\begin{equation}
\mu (\alpha (x),\mu(y, z))+\mu (\alpha (y), \mu(x, z))=0.
\end{equation}
Similarly, using the right alternativity and the assumption of
anticommutativity, we get the second identity.
\end{proof}

The following theorem provides a way to construct Hom-alternative
algebras starting from an alternative algebra and an algebra
endomorphism. This procedure was applied to associative algebras,
$G$-associative algebras and Lie algebra in \cite{Yau:homology}. It
was  extended to coalgebras in \cite{HomAlgHomCoalg} and to $n$-ary
algebras of Lie type respectively associative type in
\cite{AMS2009}.

\begin{theorem}[\cite{Mak:HomAlternativeHomJord}]\label{thmConstrHomAlt}
Let $(V,\mu)$ be a left  alternative algebra (resp. a right
alternative algebra ) and  $\alpha : V\rightarrow V$ be an
 algebra endomorphism. Then $(V,\mu_\alpha,\alpha)$,
where $\mu_\alpha=\alpha\circ\mu$,  is a left  Hom-alternative
algebra (resp. right Hom-alternative algebra).

Moreover, suppose that  $(V',\mu')$ is another left  alternative algebra (resp. a right
alternative algebra )  and $\alpha ' : V'\rightarrow V'$ is an algebra
endomorphism. If $f:V\rightarrow V'$ is an algebras morphism that
satisfies $f\circ\alpha=\alpha'\circ f$ then
$$f:(V,\mu_\alpha,\alpha)\longrightarrow (V',\mu'_{\alpha '},\alpha ')
$$
is a morphism of left  Hom-alternative algebras (resp. right Hom-alternative algebras).
\end{theorem}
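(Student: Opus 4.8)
The plan is to reduce the Hom-alternative identities for the twisted triple $(V,\mu_\alpha,\alpha)$ to the ordinary alternative identities of $(V,\mu)$, by observing that twisting by an algebra endomorphism $\alpha$ simply multiplies the associator by $\alpha^2$. This is the same mechanism used in Theorems \ref{thmConstrGHomAss} and \ref{thm:SALmorphism}.

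First I would compute the Hom-associator $\mathfrak{as}_\alpha$ (see \eqref{HomAssociator}) of $(V,\mu_\alpha,\alpha)$. Since $\mu_\alpha=\alpha\circ\mu$ and $\alpha$ satisfies $\alpha\circ\mu=\mu\circ(\alpha\times\alpha)$, for all $x,y,z\in V$ one gets
\begin{equation*}
\mu_\alpha(\alpha(x),\mu_\alpha(y,z))=\alpha\bigl(\mu(\alpha(x),\alpha(\mu(y,z)))\bigr)=\alpha^2\bigl(\mu(x,\mu(y,z))\bigr),
\end{equation*}
and likewise $\mu_\alpha(\mu_\alpha(x,y),\alpha(z))=\alpha^2\bigl(\mu(\mu(x,y),z)\bigr)$. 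Subtracting,
\begin{equation*}
\mathfrak{as}_\alpha(x,y,z)=\alpha^2\bigl(\mu(x,\mu(y,z))-\mu(\mu(x,y),z)\bigr),
\end{equation*}
so the Hom-associator of the twisted algebra is $\alpha^2$ applied to the ordinary associator of $(V,\mu)$. The only care needed here is bookkeeping the repeated use of $\alpha\circ\mu=\mu\circ(\alpha\times\alpha)$.

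Second, I would invoke the reformulation of the Hom-alternative identities given just after the definition: $(V,\mu_\alpha,\alpha)$ is left Hom-alternative iff $\mathfrak{as}_\alpha(x,x,y)=0$, and right Hom-alternative iff $\mathfrak{as}_\alpha(y,x,x)=0$, for all $x,y\in V$. By the displayed identity these read $\alpha^2\bigl(\mu(x,\mu(x,y))-\mu(\mu(x,x),y)\bigr)=0$ and $\alpha^2\bigl(\mu(y,\mu(x,x))-\mu(\mu(y,x),x)\bigr)=0$ respectively, and the expressions inside $\alpha^2$ already vanish because $(V,\mu)$ is left (resp. right) alternative (this is \eqref{HomLeftAlternative}, resp. \eqref{HomRightAlternative}, with $\alpha=\id$, after renaming the variables). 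This settles the first assertion; the two parenthetical cases are entirely parallel.

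For the functoriality statement I would just check the two defining conditions of a morphism of Hom-algebras. The condition $f\circ\alpha=\alpha'\circ f$ is a hypothesis. For compatibility with the multiplications, using that $f$ is an algebra morphism together with $f\circ\alpha=\alpha'\circ f$,
\begin{equation*}
f(\mu_\alpha(x,y))=f(\alpha(\mu(x,y)))=\alpha'(f(\mu(x,y)))=\alpha'(\mu'(f(x),f(y)))=\mu'_{\alpha'}(f(x),f(y)),
\end{equation*}
so $f:(V,\mu_\alpha,\alpha)\to(V',\mu'_{\alpha'},\alpha')$ is a morphism of left (resp. right) Hom-alternative algebras. I do not anticipate a genuine obstacle: the whole content is the identity $\mathfrak{as}_\alpha=\alpha^2\circ(\text{ordinary associator})$, after which both parts are routine verifications.
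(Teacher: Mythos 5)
Your proof is correct and is exactly the standard "composition/twisting" argument (the one the paper attributes to Yau and uses for Theorems \ref{thmConstrGHomAss}, \ref{thm:SALmorphism} and \ref{thmConstrHomJordan}); the survey itself omits the proof and defers to \cite{Mak:HomAlternativeHomJord}, where the same computation $\mathfrak{as}_\alpha=\alpha^2\circ(\text{ordinary associator})$ is the key step. Both the reduction of the left/right Hom-alternative identities to the untwisted ones and the verification of the morphism conditions are carried out correctly.
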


\begin{remark}\label{HomAltInducedByAlt}
Let  $(V,\mu,\alpha)$ be a Hom-alternative algebra, one may ask whether this
Hom-alternative algebra is   induced by an
ordinary alternative algebra $(V,\widetilde{\mu})$, that is $\alpha$
is an algebra endomorphism with respect to $\widetilde{\mu}$ and
$\mu=\alpha\circ\widetilde{\mu}$. This question was addressed and discussed for
Hom-associative algebras in \cite{FregierGohr2,Gohr}.

First observation, if $\alpha$
is an algebra endomorphism with respect to $\widetilde{\mu}$
then $\alpha$ is also an algebra endomorphism
with respect to $\mu$. Indeed,
$$\mu(\alpha(x),\alpha(y))=\alpha\circ\widetilde{\mu}(\alpha(x),\alpha(y))=
\alpha\circ\alpha\circ\widetilde{\mu}(x,y)=\alpha\circ\mu(x,y).$$

Second observation, if $\alpha$ is bijective then $\alpha^{-1}$ is
also an algebra automorphism. Therefore one may use an untwist
operation on the Hom-alternative algebra in order to recover the
alternative algebra ($\widetilde{\mu}=\alpha^{-1}\circ\mu$).
\end{remark}

\subsection{ Examples of Hom-Alternative algebras}
We construct examples of Hom-alternative using theorem
(\ref{thmConstrHomAlt}).
We use to this end the classification of 4-dimensional alternative algebras
which are not associative (see \cite{EGG}) and the algebra of octonions (see \cite{Baez}).
 For each algebra, algebra endomorphisms are provided. Therefore, Hom-alternative
 algebras are attached according to theorem
(\ref{thmConstrHomAlt}).

\begin{example}
[Hom-alternative algebras of dimension 4]  According to \cite{EGG}, p 144,
there are exactly two alternative but not associative algebras of
dimension 4 over any field.
With respect to a basis $\{e_0, e_1, e_2, e_3\}$, one algebra is given by the
following multiplication (the unspecified products are zeros)
\begin{eqnarray*} && \mu_1(e_0,e_0)=e_0, \; \mu_1 (e_0,e_1)=e_1,
\;\mu_1 (e_2,e_0)=e_2,\\&&
\mu_1 (e_2,e_3)=e_1, \;\mu_1 (e_3,e_0)=e_3, \;\mu_1 (e_3,e_2)=- e_1.
\end{eqnarray*}
The other algebra is given by
\begin{eqnarray*}&& \mu_2 ( e_0,e_0)=e_0, \;\mu_2 ( e_0,e_2)=e_2,
 \;\mu_2 (e_0,e_3)=e_3,
 \\&& \mu_2 (e_1,e_0)=e_1, \;\mu_2 (e_2,e_3)=e_1, \;\mu_2 (e_3,e_2)=- e_1.
 \end{eqnarray*}
These two alternative algebras are anti-isomorphic, that is the first
one is isomorphic to the opposite of the second one. The algebra endomorphisms of $\mu_1$ and  $\mu_2$ are exactly the same.
 We provide two examples of algebra endomorphisms for these algebras.
\begin{enumerate}
 \item The algebra endomorphism  $\alpha_1$ with respect to the same basis is defined by
\begin{eqnarray*}
&& \alpha_1(e_0)= e_0+a_1\ e_1+a_2\ e_2+a_3\ e_3 ,\ \
\alpha_1(e_1)=0,\ \ \\&&
\alpha_1(e_2)=a_4 \ e_2+\frac{a_4 a_3}{a_2} \ e_3,\ \
\alpha_1(e_3)=a_5 \ e_2+\frac{a_5 a_3}{a_2} \ e_3,
\end{eqnarray*}
with $a_1,\cdots,a_5\in \K$ and $a_2\neq 0$.
\item The algebra endomorphism $\alpha_2$ with respect to the same basis is defined by
\begin{eqnarray*}
&& \alpha_2(e_0)= e_0+a_1\ e_1+a_2\ e_2+a_3\ e_3 ,\ \
\alpha_2(e_1)=a_4\ e_1,\ \ \\&&
\alpha_2(e_2)=- \frac{a_4 a_2}{a_5} \ e_2- \frac{a_4 a_3}{a_5} \ e_3,\ \
\alpha_2(e_3)=a_5 \ e_1+a_6 \ e_2+\frac{a_6 a_3-a_5}{a_2} \ e_3,
\end{eqnarray*}
with $a_1,\cdots,a_6\in \K$ and $a_2,a_5\neq 0$.
\end{enumerate}

According to theorem (\ref{thmConstrHomAlt}), the linear map $\alpha_1$ an the
following multiplications

 \begin{align*} & \bullet  \mu^1_1(e_0,e_0)=e_0+a_1\ e_1+a_2\ e_2+a_3\ e_3, \;
\mu^1_1 (e_0,e_1)=0,\\& \ \ \
\mu^1_1 (e_2,e_0)=a_4 \ e_2+\frac{a_4 a_3}{a_2} \ e_3,
\mu^1_1 (e_2,e_3)=0, \\& \ \ \ \mu^1_1 (e_3,e_0)=a_5 \ e_2+\frac{a_5 a_3}{a_2} \ e_3, \;\mu_1 (e_3,e_2)=0.
\end{align*}

\begin{align*}& \bullet \mu^1_2 ( e_0,e_0)=e_0+a_1\ e_1+a_2\ e_2+a_3\ e_3, \;
\mu^1_2 ( e_0,e_2)=a_4 \ e_2+\frac{a_4 a_3}{a_2} \ e_3,
 \;\\& \ \ \ \mu^1_2 (e_0,e_3)=a_5 \ e_2+\frac{a_5 a_3}{a_2} \ e_3,
 \; \mu^1_2 (e_1,e_0)=0, \;\mu^1_2 (e_2,e_3)=0, \;\mu^1_2 (e_3,e_2)=0.
 \end{align*}
 determine  4-dimensional Hom-alternative algebras.

The linear map $\alpha_2$ leads to the following multiplications

\begin{align*}& \bullet \mu^2_1(e_0,e_0)=e_0+a_1\ e_1+a_2\ e_2+a_3\ e_3, \;
 \mu^2_1 (e_0,e_1)=a_4\ e_1,\\& \ \ \
\;\mu^2_1 (e_2,e_0)=- \frac{a_4 a_2}{a_5} \ e_2- \frac{a_4 a_3}{a_5} \ e_3,\\& \ \ \
\mu^2_1 (e_2,e_3)=a_4\ e_1, \;\mu^2_1 (e_3,e_0)=e_3,
\;\mu^2_1 (e_3,e_2)=- a_4\ e_1.
\end{align*}

\begin{align*}& \bullet \mu^2_2 ( e_0,e_0)=e_0+a_1\ e_1+a_2\ e_2+a_3\ e_3,
 \;\mu^2_2 ( e_0,e_2)=- \frac{a_4 a_2}{a_5} \ e_2- \frac{a_4 a_3}{a_5} \ e_3,
 \;\\& \ \ \ \mu^2_2 (e_0,e_3)=a_5 \ e_1+a_6 \ e_2+\frac{a_6 a_3-a_5}{a_2} \ e_3,
 \;\\& \ \ \ \mu^2_2 (e_1,e_0)=a_4\ e_1, \;\mu_2 (e_2,e_3)=a_4\ e_1,
  \;\mu^2_2 (e_3,e_2)=- a_4\ e_1.
 \end{align*}
\end{example}

\begin{example}[Octonions]
{\rm Octonions are typical example of alternative algebra. They were discovered in 1843 by John T. Graves who
called them Octaves and independently by Arthur Cayley in 1845. See \cite{Baez} for
the role of the octonions in algebra, geometry and topology and see
also \cite{Albuquerque} where octonions are viewed as a quasialgebra. The
octonions algebra which is also called Cayley Octaves or Cayley
algebra is an  $8$-dimensional defined with respect to a basis
$\{u,e_1,e_2,e_3,e_4,e_5,e_6,e_7\}$, where $u$ is the identity for the
multiplication, by the following multiplication table.  The table
describes multiplying the $i$th row elements  by the $j$th column
elements.

\[
\begin{array}{|c|c|c|c|c|c|c|c|c|}
  \hline
   \ & u& e_1 & e_2 & e_3 & e_4 & e_5 & e_6 & e_7 \\ \hline
   u& u& e_1 & e_2 & e_3 & e_4 & e_5 & e_6 & e_7 \\ \hline
   e_1 &e_1 & -u & e_4 & e_7 & -e_2 & e_6 & -e_5 &- e_3 \\ \hline
   e_2 &e_2 & -e_4 & -u & e_5 & e_1 & -e_3 & e_7 & -e_6 \\ \hline
   e_3 &e_3 & -e_7 & -e_5 & -u & e_6 & e_2 & -e_4 & e_1 \\ \hline
   e_4 &e_4 & e_2 & -e_1 & -e_6 & -u & e_7 & e_3 & -e_5 \\ \hline
   e_5 &e_5 & -e_6 & e_3 & -e_2 & -e_7 & -u & e_1 & e_4 \\ \hline
   e_6 &e_6 & e_5 & -e_7 & e_4 & -e_3 & -e_1 & -u & e_2 \\ \hline
   e_7 &e_7 & e_3 & e_6 & -e_1 & e_5 & -e_4 & -e_2 & -u \\
    \hline
\end{array}
\]
\

The diagonal algebra endomorphisms of octonions are give by maps $\alpha$
defined with respect to the basis $\{u,e_1,e_2,e_3,e_4,e_5,e_6,e_7\}$ by
\begin{eqnarray*}
&& \alpha(u)=u,\ \ \alpha(e_1)=a\ e_1,\ \ \alpha(e_2)=b\ e_2,\ \
\alpha(e_3)=c \ e_3
,\\&&
 \alpha(e_4)=a b\ e_4,\ \ \alpha(e_5)=b c\  e_5
,\ \ \alpha(e_6)=a b c\  e_6,\ \ \alpha(e_7)=a c\ e_7,
\end{eqnarray*}
where $a,b,c$ are any parameter in $\K$.
The associated Hom-alternative algebra to the octonions algebra according to
theorem (\ref{thmConstrHomAlt}) is described by the map $\alpha$ and the
multiplication   defined by the following table. The table
describes multiplying the $i$th row elements  by the $j$th column
elements.

\[
\begin{array}{|c|c|c|c|c|c|c|c|c|}
  \hline
   \ & u& e_1 & e_2 & e_3 & e_4 & e_5 & e_6 & e_7 \\ \hline
   u& u& a e_1 & b e_2 &c e_3 & ab e_4 & bc e_5 & a bc e_6 & a c e_7 \\ \hline
   e_1 &a e_1 & -u & ab e_4 & a c e_7 & -b e_2 & a bc e_6 & -bc e_5 &- c e_3 \\ \hline
   e_2 &b e_2 & -ab e_4 & -u & bc e_5 & a e_1 & -c e_3 & a c e_7 & -a bc e_6 \\ \hline
   e_3 &c e_3 & -a c e_7 & -bc e_5 & -u & a bc e_6 & b e_2 & -ab e_4 & a e_1 \\ \hline
   e_4 &ab e_4 & b e_2 & -a e_1 & -a bc e_6 & -u & a c e_7 & c e_3 & -bc e_5 \\ \hline
   e_5 &bc e_5 & -a bc e_6 & c e_3 & -b e_2 & -a c e_7 & -u & a e_1 & ab e_4 \\ \hline
   e_6 &a bc e_6 & bc e_5 & -a c e_7 & ab e_4 & -c e_3 & -a e_1 & -u & b e_2 \\ \hline
   e_7 &a c e_7 & c e_3 & a bc e_6 & -a e_1 & bc e_5 & -ab e_4 & -b e_2 & -u \\
    \hline
\end{array}
\]
Notice that the new algebra is no longer unital, neither  an
alternative algebra since
$$\mu(u,\mu(u,e_1))-\mu(\mu(u,u),e_1)=(a^2-a)e_1,
$$
which is different from $0$ when $a\neq 0,1$.
}
\end{example}

{\ding{167}}

\section{Hom-Jordan algebras}\label{SectionHomJordan}
In this section, we consider, following \cite{Mak:HomAlternativeHomJord}, a generalization of Jordan algebra by twisting
the usual Jordan identity
$
(x\cdot y)\cdot x^2=x\cdot( y\cdot x^2).
$
We show that this
generalization fits with Hom-associative algebras. Also, we provide a procedure to
construct examples starting from an ordinary Jordan algebras.
\begin{definition}[\cite{Mak:HomAlternativeHomJord}]
A Hom-Jordan algebra is a triple $(V, \mu, \alpha)$
    consisting of a linear space $V$, a bilinear map $\mu: V\times V
    \rightarrow V$ which is commutative and a  homomorphism $\alpha: V \rightarrow V$
satisfying

\begin{equation}\label{HomJordanIdentity}
\mu(\alpha^2(x),\mu (y,\mu(x,x)))=\mu (\mu (\alpha (x),y),\alpha
(\mu(x,x)))
\end{equation}
where $\alpha^2=\alpha \circ \alpha$.
\end{definition}

\begin{remark}
Since the multiplication is commutative, one may write the identity
\eqref{HomJordanIdentity} as
\begin{equation}\label{HomJordanIdentity2}
\mu(\mu (y,\mu(x,x)),\alpha^2(x))=\mu (\mu (y,\alpha (x)),\alpha
(\mu(x,x))).
\end{equation}
\end{remark}
When the twisting map $\alpha$ is the identity map, we recover the classical
notion of Jordan algebra.

The identity \eqref{HomJordanIdentity} is motivated by the
following functor which associates to a Hom-associative algebra a
Hom-Jordan algebra by considering the plus Hom-algebra.

\begin{theorem}[\cite{Mak:HomAlternativeHomJord}]
Let $(V,m,\alpha)$ be a Hom-associative algebra. Then the  Hom-algebra $(V,\mu,\alpha)$,
where the multiplication $\mu$ is  defined for all $x,y \in V$ by
$$
\mu( x,y )=\frac{1}{2}(m (x,y)+m (y,x )).
$$
is a Hom-Jordan algebra.
\end{theorem}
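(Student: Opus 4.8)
The plan is to verify the two defining conditions of a Hom-Jordan algebra for the triple $(V,\mu,\alpha)$. Commutativity of $\mu$ is immediate from $\mu(x,y)=\tfrac12(m(x,y)+m(y,x))$, so the entire content is the twisted Jordan identity \eqref{HomJordanIdentity}. The first step is the trivial but crucial observation that $\mu(x,x)=\tfrac12(m(x,x)+m(x,x))=m(x,x)$; I will write $p=m(x,x)=\mu(x,x)$ for brevity, and note that $\alpha(\mu(x,x))=\alpha(p)$. Then, using only bilinearity of $\mu$ and its definition in terms of $m$, I would expand both sides of \eqref{HomJordanIdentity} into a sum of four ``pure-$m$'' terms each (with common prefactor $\tfrac14$): the left-hand side becomes $m(\alpha^2(x),m(y,p))$, $m(m(y,p),\alpha^2(x))$, $m(\alpha^2(x),m(p,y))$, $m(m(p,y),\alpha^2(x))$, and the right-hand side becomes $m(m(\alpha(x),y),\alpha(p))$, $m(\alpha(p),m(\alpha(x),y))$, $m(m(y,\alpha(x)),\alpha(p))$, $m(\alpha(p),m(y,\alpha(x)))$.

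The core of the argument is then a term-by-term matching using Hom-associativity \eqref{Hom-ass}, written as $m(\alpha(a),m(b,c))=m(m(a,b),\alpha(c))$ and read in both directions. Writing $\alpha^2(x)=\alpha(\alpha(x))$, one application of \eqref{Hom-ass} turns $m(\alpha^2(x),m(y,p))$ into $m(m(\alpha(x),y),\alpha(p))$ and turns $m(m(p,y),\alpha^2(x))$ into $m(\alpha(p),m(y,\alpha(x)))$, which accounts for two of the right-hand terms. The remaining two pairs require one extra ingredient: since every Hom-associative algebra is Hom-flexible (the Corollary following Lemma \ref{flexi23}), we have $\mathfrak{as}_\alpha(x,x,x)=0$, i.e. $m(m(x,x),\alpha(x))=m(\alpha(x),m(x,x))$, that is $m(p,\alpha(x))=m(\alpha(x),p)$. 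Combining this flexibility relation with \eqref{Hom-ass} identifies $m(m(y,p),\alpha^2(x))$ with $m(m(y,\alpha(x)),\alpha(p))$, and $m(\alpha^2(x),m(p,y))$ with $m(\alpha(p),m(\alpha(x),y))$. Summing the four matched pairs yields that the two sides of \eqref{HomJordanIdentity} are equal.

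I expect the main obstacle to be purely bookkeeping: tracking which of the eight terms pairs with which and, in particular, being careful about the order of the arguments inside each $m$ — the flexibility relation is precisely what repairs the two ``reversed'' cases, and forgetting it is the natural pitfall. It is worth recording that the argument needs no multiplicativity of $\alpha$: the element $\alpha(p)=\alpha(\mu(x,x))$ is never expanded, and the only structural facts used are Hom-associativity \eqref{Hom-ass} and its consequence Hom-flexibility. Setting $\alpha=\id$ recovers the classical computation that the plus algebra of an associative algebra is a Jordan algebra.
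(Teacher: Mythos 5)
Your proof is correct and follows essentially the same route as the paper: expand both sides of \eqref{HomJordanIdentity} into four pure-$m$ terms each, cancel two pairs by a single application of Hom-associativity, and handle the remaining two pairs via the relation $m(m(x,x),\alpha(x))=m(\alpha(x),m(x,x))$ (an instance of $\mathfrak{as}_\alpha(x,x,x)=0$) sandwiched between two further applications of \eqref{Hom-ass}. The paper performs exactly this three-step chain where you invoke Hom-flexibility, so the two arguments coincide.
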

\begin{proof}
The commutativity of $\mu$ is obvious. We   compute the difference
$$D=\mu(\alpha^2(x),\mu
(y,\mu(x,x)))-\mu (\mu
(\alpha(x),y),\alpha (\mu(x,x)))
$$
A straightforward computation gives
\begin{eqnarray*}
D&=& m(\alpha^2(x),m(y,m (x,x))) +m(m(y,m(x,x)),\alpha^2(x))
\\ \ && +
m(\alpha^2(x),m(m(x,x),y)) +m(m(m(x,x),y),\alpha^2(x))
\\ \ && -m(m(\alpha(x),y),\alpha(m(x,x))) -m(\alpha(m(x,x)),m(\alpha(x),y))
\\ \ &&
-m(m(y,\alpha(x)),\alpha(m(x,x)))
-m(\alpha(m(x,x)),m(y,\alpha(x))).
\end{eqnarray*}
We have by Hom-associativity
\begin{eqnarray*}
m(\alpha^2(x),m(y,m (x,x)))-m(m(\alpha(x),y),\alpha(m(x,x)))=0\\
m(m(m(x,x),y),\alpha^2(x))-m(\alpha(m(x,x)),m(y,\alpha(x)))=0.
\end{eqnarray*}
Therefore
\begin{eqnarray*}
D&=&
m(m(y,m(x,x)),\alpha^2(x))+
m(\alpha^2(x),m(m(x,x),y))
\\ \ &&
-m(\alpha(m(x,x)),m(\alpha(x),y))
-m(m(y,\alpha(x)),\alpha(m(x,x))).
\end{eqnarray*}
One may show that for any Hom-associative algebra we have
\begin{eqnarray*}
m(\alpha(m(x,x)),m(\alpha(x),y))
&= m(m(m(x,x),\alpha(x)),\alpha(y))
\\&=m(m(\alpha(x),m(x,x)),\alpha(y))
\\&=m(\alpha^2(x),m(m(x,x),y)),
\end{eqnarray*}
and similarly
\begin{equation*}
m(m(y,\alpha(x)),\alpha(m(x,x)))=m(m(y,m(x,x)),\alpha^2(x)).
\end{equation*}
Thus
\begin{eqnarray*}
D&=&
m(m(y,m(x,x)),\alpha^2(x))+
m(\alpha^2(x),m(m(x,x),y))
\\ \ &&
-m(\alpha^2(x),m(m(x,x),y))
-m(m(y,m(x,x)),\alpha^2(x))
\\ \ &=&0.
\end{eqnarray*}
\end{proof}
\begin{remark}
The definition of Hom-Jordan algebra seems to be non natural one
expects that the identity should be of the form
\begin{equation}\label{HomJordanIdentity2}
\mu(\alpha(x),\mu (y,\mu(x,x))))=\mu (\mu (x,y),\alpha (\mu(x,x)))
\end{equation}
or
\begin{equation}\label{HomJordanIdentity3}
\mu(\alpha(x),\mu (y,\mu(x,x))))=\mu (\mu (x,y),\mu(x,\alpha (x))).
\end{equation}
It turns out that these identities do not fit with the previous
proposition.

 Notice also that in general a Hom-alternative algebra doesn't lead to a
 Hom-Jordan algebra.
\end{remark}

The following theorem gives a procedure to construct Hom-Jordan algebras
using ordinary Jordan algebras and their algebra
endomorphisms.
\begin{theorem}[\cite{Mak:HomAlternativeHomJord}]\label{thmConstrHomJordan}
Let $(V,\mu)$ be a Jordan algebra and $\alpha : V\rightarrow V$ be
an
 algebra endomorphism. Then $(V,\mu_\alpha,\alpha)$,
where $\mu_\alpha=\alpha\circ\mu$,  is a  Hom-Jordan
algebra.

Moreover, suppose that  $(V',\mu')$ is another
Jordan algebra  and $\alpha ' : V'\rightarrow V'$ is an algebra
endomorphism. If $f:V\rightarrow V'$ is an algebras morphism that
satisfies $f\circ\alpha=\alpha'\circ f$ then
$$f:(V,\mu_\alpha,\alpha)\longrightarrow (V',\mu'_{\alpha '},\alpha ')
$$
is a morphism of  Hom-Jordan algebras.
\end{theorem}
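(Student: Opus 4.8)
The plan is to verify directly the two defining properties of a Hom-Jordan algebra for the triple $(V,\mu_\alpha,\alpha)$ with $\mu_\alpha=\alpha\circ\mu$, and then to check compatibility with morphisms; the argument runs exactly along the lines of the analogous constructions in Theorems \ref{thmConstrHomAlt} and \ref{thmConstrGHomAss}. Commutativity of $\mu_\alpha$ is immediate, since $\mu$ is commutative and $\alpha$ is linear, so the whole content lies in the twisted Jordan identity \eqref{HomJordanIdentity}.

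First I would expand each side of \eqref{HomJordanIdentity}, with $\mu$ replaced throughout by $\mu_\alpha$, keeping in mind that the square $\mu_\alpha(x,x)$ equals $\alpha(\mu(x,x))$ and not $\mu(x,x)$. Using repeatedly that $\alpha$ is an algebra endomorphism of $(V,\mu)$, i.e. $\alpha(\mu(a,b))=\mu(\alpha(a),\alpha(b))$ for all $a,b\in V$, every occurrence of $\alpha$ can be pushed to the outside. Writing $u:=\alpha(x)$, I expect the left-hand side $\mu_\alpha(\alpha^{2}(x),\mu_\alpha(y,\mu_\alpha(x,x)))$ to reduce to $\alpha^{2}\big(\mu(u,\mu(y,\mu(u,u)))\big)$ and the right-hand side $\mu_\alpha(\mu_\alpha(\alpha(x),y),\alpha(\mu_\alpha(x,x)))$ to reduce to $\alpha^{2}\big(\mu(\mu(u,y),\mu(u,u))\big)$; in particular both sides acquire exactly the outer power $\alpha^{2}$, which is why the defining identity must be stated with $\alpha^{2}(x)$ rather than $\alpha(x)$. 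At this point the desired equality is nothing but $\alpha^{2}$ applied to the classical Jordan identity $\mu(\mu(u,y),u^{2})=\mu(u,\mu(y,u^{2}))$ evaluated at the element $u=\alpha(x)$ and arbitrary $y$, which holds because $(V,\mu)$ is a Jordan algebra.

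For the second assertion, let $f:(V,\mu)\to(V',\mu')$ be a Jordan algebra morphism with $f\circ\alpha=\alpha'\circ f$. Then $f(\mu_\alpha(a,b))=f(\alpha(\mu(a,b)))=\alpha'(\mu'(f(a),f(b)))=\mu'_{\alpha'}(f(a),f(b))$, so $f$ respects the twisted multiplications, and compatibility with the twisting maps is the hypothesis $f\circ\alpha=\alpha'\circ f$ itself; hence $f$ is a morphism of Hom-Jordan algebras. The only real difficulty in the whole proof is the bookkeeping in the expansion above, namely tracking how many copies of $\alpha$ accumulate on each monomial so that the two sides share the same overall power of $\alpha$ before the classical Jordan identity is invoked; there is no structural obstacle, and in particular no linearization of the Jordan identity is needed since it is applied to the single element $\alpha(x)$.
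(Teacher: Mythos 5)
Your proof is correct: the computation showing that both sides of \eqref{HomJordanIdentity}, with $\mu$ replaced by $\mu_\alpha=\alpha\circ\mu$, collapse to $\alpha^{2}$ applied to the classical Jordan identity at $u=\alpha(x)$ is exactly the standard twisting argument behind Theorems \ref{thmConstrGHomAss} and \ref{thmConstrHomAlt}, and it is the intended proof here (the paper itself omits it, deferring to \cite{Mak:HomAlternativeHomJord}). The commutativity of $\mu_\alpha$ and the morphism statement are likewise handled correctly.
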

%
\begin{remark}
We may give here similar observations as in the remark
(\ref{HomAltInducedByAlt})
concerning Hom-Jordan algebra  induced by an
ordinary Jordan algebra.
\end{remark}
We provide in the sequel an example of Hom-Jordan algebras.

\begin{example}
We consider Hom-Jordan algebras associated to  Hom-associative
 algebras described in example (\ref{example1ass}).
Let $\{e_1,e_2,e_3\}$  be a basis of a $3$-dimensional linear space
$V$ over $\K$. The following multiplication $\mu$ and linear map
$\alpha$ on $V$ define  Hom-Jordan algebras over $\K^3${\rm :}
$$
\begin{array}{ll}
\begin{array}{lll}
 \widetilde{\mu} ( e_1,e_1)&=& a\ e_1, \ \\
\widetilde{\mu} ( e_1,e_2)&=&\widetilde{\mu} ( e_2,e_1)=a\ e_2,\\
\widetilde{\mu} ( e_1,e_3)&=&\widetilde{\mu} ( e_3,x_1)=b\ e_3,\\
 \end{array}
 & \quad
 \begin{array}{lll}
\widetilde{\mu} ( e_2,e_2)&=& a\ e_2, \ \\
\widetilde{\mu} ( e_2, e_3)&=& \frac{1}{2}b\ e_3, \ \\
\widetilde{\mu} ( e_3,e_2)&=& \widetilde{\mu} ( e_3,e_3)=0,
  \end{array}
\end{array}
$$

$$  \alpha (e_1)= a\ e_1, \quad
 \alpha (e_2) =a\ e_2 , \quad
   \alpha (e_3)=b\ e_3
$$
where $a,b$ are parameters in $\K$.

It turns out that the multiplication of this Hom-Jordan algebra defines
 a Jordan algebra.

\end{example}

\begin{remark}
We may define the noncommutative Jordan algebras as triples
$(V,\mu,\alpha)$ satisfying the identity \eqref{HomJordanIdentity}
and the flexibility condition, which is a generalization of the
commutativity. Eventually, we may consider the Hom-flexibilty
defined by the identity $\mu(\alpha(x), \mu (y, x))= \mu (\mu (x,
y), \alpha (x)).$

\end{remark}
\begin{remark}
A $\mathbb{Z}_2$-garded version of Hom-alternative algebras and Hom-Jordan algebras
might be defined in a natural way.
\end{remark}

{\ding{167}}

\section*{Acknowledgments}
Dedicated to Amine Kaidi on his 60th birthday.


\begin{thebibliography}{99}
\setlength{\itemsep}{-2pt}
\bibitem{Albert48} ALBERT, A.A.,(1948).
\emph{Power associative rings}, Trans. Amer. math. Soc. \textbf{64},
552-597.
\bibitem{AizawaSaito}
AIZAWA, N., SATO, H.,(1991). \emph{$q$-deformation of the Virasoro algebra with
central extension}, Physics Letters B, Phys. Lett. B \textbf{256},
no. 1, 185--190.

\bibitem{Albuquerque} ALBUQUERQUE, H.,  MAJID, S.,(1999).
\emph{Quasialgebra Structure of the Octonions}, J. Algebra
\textbf{220} , 188-224.

\bibitem{AmmarMakhlouf2009} AMMAR, F., MAKHLOUF, A.,(2009).
\emph{Hom-Lie Superalgebras and Hom-Lie admissible Superalgebras},
arXiv:0906.1668v2 [math.RA].
\bibitem{AMS2009} ATAGUEMA, H., MAKHLOUF, A., SILVESTROV, S.,(2009).
\emph{Generalization of $n$-ary Nambu algebras and beyond}, J. Math.
Phys., \textbf{50}, 1.
\bibitem{BMS} BREMNER, M.R., MURAKAMI, L.I., SHESTAKOV, I.P,
\emph{Nonassociative algebras}, Preprint.

\bibitem{ChaiElinPop} CHAICHIAN, M. , ELLINAS, D.,
POPOWICZ, Z., (1990). \emph{Quantum conformal algebra with central extension}, Phys.
Lett. B \textbf{248}, no. 1-2, 95--99.

\bibitem{ChaiKuLukPopPresn} CHAICHIAN, M.,
ISAEV, A. P. ,  LUKIERSKI, J., POPOWICZ, Z., PRE\v{S}NAJDER, P., (1991).
\emph{$q$-deformations of Virasoro algebra and conformal dimensions}, Phys.
Lett. B \textbf{262}   (1), 32--38.

\bibitem{ChaiIsKuLuk}   CHAICHIAN, M.,  KULISH, P.,
LUKIERSKI, J., (1990). \emph{$q$-deformed Jacobi identity, $q$-oscillators and
$q$-deformed infinite-dimensional algebras}, Phys. Lett. B
\textbf{237} , no. 3-4, 401--406.

\bibitem{ChaiPopPres} CHAICHIAN, M.,
POPOWICZ, Z. , PRE\v{S}NAJDER, P., (1990). \emph{ $q$-Virasoro algebra and its
relation to the $q$-deformed KdV system}, Phys. Lett. B \textbf{249},
no. 1, 63--65.
\bibitem{CurtrZachos1} CURTRIGHT, T. L.,   ZACHOS, C. K., (1990).
\emph{Deforming maps for quantum algebras}, Phys. Lett. B \textbf{243}, no.
3, 237--244.

\bibitem{DamKu} DAMASKINSKY, E. V.,  KULISH, P. P., (1992).
\emph{Deformed oscillators and their applications},  Zap.\
Nauch.\ Semin.\ LOMI \textbf{189}, 37--74 (1991); Engl. transl. in
J. Soviet Math.  \textbf{62}  (5), 2963--2986.

\bibitem{DaskaloyannisGendefVir}
DASKALOYANNIS, C., (1992).  \emph{Generalized deformed Virasoro algebras}, Modern
Phys. Lett. A \textbf{7} no. 9, 809--816.


\bibitem{Baez}  BAEZ, J.C., (2001). {\emph The octonions,} Bull. of the Amer. Math. Soc., \textbf{39} 2, 145--2005.

\bibitem{Canepl2009}  CAENEPEEL, S.,   GOYVAERTS, I.,  (2009). \emph{
Hom-Hopf algebras}, arXiv:0907.0187v1 [math.RA].


\bibitem{ElhamdadiMakhlouf09}  ELHAMDADI, M.,  MAKHLOUF, A.,  (2009). \emph{
Cohomology and Formal Deformations of Left Alternative algebras,}
arXiv:0907.1548v1 [Math.RA].

\bibitem{FregierGohr1}  FREGIER, Y., GOHR, A., (2009). \emph{Lie Type
Hom-algebras}, arXiv:0903.3393v2 [Math.RA].

\bibitem{FregierGohr2} FREGIER, Y., GOHR, A., (2009).
\emph{On unitality conditions for Hom-associative algebras},
arXiv:0904.4874v2 [Math.RA].
\bibitem{Gohr} GOHR, A., (2009). \emph{On Hom-algebras with surjective
twisting}, arXiv:0906.3270v3 [Math.RA].

 \bibitem{EGG}GOODAIRE, E., (2000).
\emph{  Alternative rings of small order and the hunt for Moufang circle loops,}  Nonassociative algebra and its applications (S\~ao Paulo, 1998),  137--146, Lecture Notes in Pure and Appl. Math., 211, Marcel Dekker, New York.

\bibitem{GR04} GOZE, M.,  REMM, E., (2004).
\emph{Lie-admissible algebras and operads}, J. Algebra \textbf{273},
 129--152.

\bibitem{HLS} HARTWIG J. T. LARSSON, D., SILVESTROV, S. D., (2006).
\emph{Deformations of Lie algebras using $\sigma$-derivations}, J.
of Algebra \textbf{295},  314-361.

\bibitem{HS} HELLSTR\"{O}M, L., SILVESTROV S. D., (2000).
\emph{Commuting elements in $q$-deformed Heisenberg algebras}, World
Scientific.


\bibitem{Hu}  HU, N., (1999).  \emph{$q$-Witt algebras,
$q$-Lie algebras, $q$-holomorph structure and representations},
Algebra Colloq. {\bf 6} , no. 1, 51--70.

\bibitem{Kassel1} KASSEL, C., (1992). \emph{Cyclic homology of differential operators,
the Virasoro algebra and a $q$-analogue}, Commun. Math. Phys. 146
, 343-351.
\bibitem{kerdman79}
KERDMAN, F. S., (1979).
{\emph Analytic Moufang loops in the large,}   Algebra i Logica  \textbf{18}, 523--555.

\bibitem{LS1}  LARSSON, D., SILVESTROV, S. D., (2005).
\emph{Quasi- Hom-Lie algebras, Central Extensions and 2-cocycle-like
identities}, J.  of Algebra \textbf{288},  321--344.

\bibitem{LS2} LARSSON, D., SILVESTROV, S. D., (2005).
\emph{Quasi-Lie algebras}, in "Noncommutative Geometry and
Representation Theory in Mathematical Physics", Contemp. Math.,
\textbf{391}, Amer. Math. Soc., Providence, RI ,   241--248.

\bibitem{LS3} LARSSON, D., SILVESTROV, S. D., (2007).
\emph{Quasi-deformations of $sl_2(\mathbb{F})$ using twisted
derivations}, Comm. in Algebra \textbf{35}, 4303 -- 4318 .

\bibitem{LiuKeQin} LIU, K. Q.,  (1992).
\emph{Characterizations of the quantum Witt algebra}, Lett. Math. Phys.
\textbf{24} , no. 4, 257--265.

\bibitem{Mak:HomAlternativeHomJord} MAKHLOUF,  A., (2009).
 \emph{Hom-Alternative algebras and Hom-Jordan algebras},
 arXiv:0909.0326 [math.RA].

\bibitem{MS}  MAKHLOUF,  A., SILVESTROV, S. D., (2008).
\emph{Hom-algebra structures}, J. Gen. Lie Theory Appl. \textbf{2}
(2) , 51--64.

\bibitem{HomHopf}  MAKHLOUF,  A., SILVESTROV, S. D., (2008).
\emph{Hom-Lie admissible Hom-coalgebras and Hom-Hopf algebras},
 Published  as Chapter 17, pp
189-206, {\rm S. Silvestrov, E. Paal, V. Abramov, A. Stolin, (Eds.),
Generalized Lie theory in Mathematics, Physics and Beyond,
Springer-Verlag, Berlin, Heidelberg.}

\bibitem{HomDeform}  MAKHLOUF,  A., SILVESTROV, S. D., (2007).
\emph{Notes on Formal deformations of Hom-Associative and Hom-Lie
algebras},  Forum Mathematicum, to appear. arXiv:0712.3130v1
[math.RA].

\bibitem{HomAlgHomCoalg} MAKHLOUF,  A., SILVESTROV, S. D., (2008).
 \emph{Hom-Algebras and Hom-Colgebras},  J. Algebra and its App., to
 appear. (arXiv:0811.0400 [math.RA]).


\bibitem{Malcev}MALTSEV, A. I., (1955). \emph{ Analytical loops,} Matem. Sbornik. 36, 569-576 (in Russian).

\bibitem{McCrimmon}  McCRIMMON, K.,
\emph{ Alternative algebras,} available in
http://www.mathstat.uottawa.ca/~neher/Papers/alternative/


\bibitem{MyungMonograph}  MYUNG, H. C., (1982) \emph{Lie algebras and Flexible Lie-admissible algebras},
Hadronic Press INC, Hadronic Press Monographs in Mathematics, 1,
Massachusetts.

\bibitem{Paal04} PAAL, E., (2004).
\emph{ Note on analytic Moufang loops,} Comment. Math. Univ. Carolinae, \textbf{45} (2), 349--354.
\bibitem{Paal08} PAAL, E.,  (2008).
\emph{ Moufang loops and generalized Lie-Cartan theorem,} Journal of Gen. Lie Theory and Applications, \textbf{2}, 45--49.

\bibitem{Sandu08} SANDU, N. I., (2008).
\emph{ About the embedding of Moufang loops in alternative algebras II,} ArXiv:0804.2049v1 [math.RA].

\bibitem{Shestakov03} SHESTAKOV, I.P., (2003).
\emph{ Moufang Loops and alternative algebras,} Proc. of Amer. Math. Soc., \textbf{132} (2),
313--316.
\bibitem{yamaguti1962} YAMAGUTI, K., (1963). \emph{On the theory of Malcev algebras}  Kumamoto J. Sci. Ser. A  6  1963 9--45.



\bibitem{Yau:EnvLieAlg}  YAU, D., (2008).
\emph{ Enveloping algebra of Hom-Lie algebras,}  J. Gen. Lie Theory
Appl. \textbf{2} (2), 95--108.

\bibitem{Yau:homology}  YAU, D., (2007).
\emph{ Hom-algebras and homology,}  arXiv:0712.3515v2.

\bibitem{Yau:comodule} YAU, D., (2008).
\emph{ Hom-bialgebras and comodule algebras,} arXiv:0810.4866.

\bibitem{Yau:YangBaxter} YAU, D., (2009).
\emph{ Hom-Yang-Baxter equation, Hom-Lie algebras,
 and quasi-triangular bialgebras,} J. Phys. A \textbf{42},  165--202.

\bibitem{Yau:YangBaxter2} YAU, D., (2009).,
\emph{The Hom-Yang-Baxter equation and Hom-Lie algebras},
arXiv:0905.1887v2.

\bibitem{Yau:ClassicYangBaxter} YAU, D., (2009).
 \emph{The classical Hom-Yang-Baxter equation and Hom-Lie bialgebras},
 arXiv:0905.1890v1.

 \bibitem{Yau:HomQuantumGrp1} YAU, D., (2009).
 \emph{Hom-quantum groups I:quasi-triangular Hom-bialgebras},
 arXiv:0906.4128v1.

\bibitem{Yau:HomQuantumGrp2} YAU, D., (2009).
 \emph{Hom-quantum groups II:cobraided Hom-bialgebras and Hom-quantum geometry},
 arXiv:0907.1880v1.
\bibitem{Yau:HomNovikov}  YAU, D., (2009).
 \emph{Hom-Novikov algebras},
 arXiv:0909.0726v1.


















\end{thebibliography}
\end{document}